\tikzset{node distance=1cm, bend angle=20,
vertex/.style={circle,minimum size=2mm,very thick, draw=black, fill=black, inner sep=0mm}, information text/.style={inner sep=1ex, font=\Large}, help lines/.style={-,color=black, >=stealth', shorten <=.5pt, shorten >=.5pt}, blue help lines/.style={help lines,color=darkblue}, red help lines/.style={help lines, color=darkred}}
\definecolor{darkred}{RGB}{105,0,0}
\newtheoremstyle{prime}%
 {\item[\hskip\labelsep \theorem@headerfont ##1\ \theorem@separator]}%
{\item[\hskip\labelsep \theorem@headerfont ##1\ ##3' \theorem@separator]}
\newtheoremstyle{proofof}
{\item[\hskip\labelsep \theorem@headerfont ##1\ \theorem@separator]}%
{\item[\hskip\labelsep \theorem@headerfont ##1\ ##3\theorem@separator]}
\newtheorem{theorem}{Theorem}
\newtheorem{proposition}[theorem]{Proposition}
\newtheorem{corollary}[theorem]{Corollary}
\newtheorem{claim}{Claim}
\theoremstyle{prime}
\def \QD1 {\hfill $\spadesuit$}
\newcommand{\case}[2]{\noindent {\bf Case #1\/:} {\it #2}}
\numberwithin{equation}{section}
\theoremstyle {nonumberplain}
\newtheorem{proof}{Proof}
\theoremstyle{proofof}
\newtheorem{proof2}{Proof}
\begin{document}
\title{\bf On DP-Coloring of Digraphs}

\author{{{J{\o}rgen Bang-Jensen}\thanks{Research supported by the Danish
research council under grant number 7014-00037B}
\thanks{
University of Southern Denmark, IMADA, Campusvej 55, DK-5320 Odense M, Denmark. E-mail address: jbj@imada.sdu.dk
}}
\and
{{Thomas Bellitto}\footnotemark[1]
\thanks{
University of Southern Denmark, IMADA, Campusvej 55, DK-5320 Odense M, Denmark. E-mail address: bellitto@imada.sdu.dk
}}
\and
{{Thomas Schweser
}\thanks{
Technische Universit\"at Ilmenau, Inst. of Math., PF 100565, D-98684 Ilmenau, Germany. E-mail
address: thomas.schweser@tu-ilmenau.de}}
\and
{{Michael Stiebitz}\thanks{
Technische Universit\"at Ilmenau, Inst. of Math., PF 100565, D-98684 Ilmenau, Germany. E-mail
address: michael.stiebitz@tu-ilmenau.de}}
}

\date{}
\maketitle

\begin{abstract}
DP-coloring is a relatively new coloring concept by Dvo\v{r}\'ak and Postle and was introduced as an extension of list-colorings of (undirected) graphs.  It transforms the problem of finding a list-coloring of a given graph $G$ with a list-assignment $L$ to finding an independent transversal in an auxiliary graph with vertex set $\{(v,c) ~|~ v \in V(G), c \in L(v)\}$. In this paper, we extend the definition of DP-colorings to digraphs using the approach from Neumann-Lara where a coloring of a digraph is a coloring of the vertices such that the digraph does not contain any monochromatic directed cycle. Furthermore, we prove a Brooks' type theorem regarding the DP-chromatic number, which extends various results on the (list-)chromatic number of digraphs. 
\end{abstract}

\noindent{\small{\bf AMS Subject Classification:} 05C20 }

\noindent{\small{\bf Keywords:} DP-coloring, Digraph coloring, Brooks' Theorem, List-coloring}

\section{Introduction}
Recall that the chromatic number $\chi(G)$ of an undirected graph $G$ is the least integer $k$ for which there is a coloring of the vertices of $G$ with $k$ colors such that each color class induces an edgeless subgraph of $G$. The chromatic number $\chi(D)$ of a digraph $D$, as defined in \cite{NeuLa82} by Neumann-Lara, is the smallest integer $k$ for which there is a coloring of the vertices of $D$ with $k$ colors such that each color class induces an acyclic subdigraph of $D$, \textit{i.e.}, a subdigraph that does not contain any directed cycle. This definition is especially reasonable because it implies that the chromatic number of a bidirected graph and the chromatic number of its underlying (undirected) graph coincide. Furthermore, it shows that various results concerning the chromatic number of undirected graphs can be extended to digraphs. For example, the analogue to Brooks' famous theorem \cite{brooks} that the chromatic number of a graph is always at most its maximum degree plus 1 and that the only conncected graphs for which equality hold are the complete graphs and the odd cycles was proven by Mohar \cite{Mo10}. As usual, a digraph $D$ is $k$\textbf{-critical} if $\chi(D)=k$ but $\chi(D') \leq k-1$ for every proper subdigraph $D'$ of $D$. Mohar~\cite{Mo10} proved the following:

\begin{theorem}[Mohar 2010]
Suppose that $D$ is a $k$-critical digraph in which each vertex $v$ satisfies $d_D^+(v)=d_D^-(v)=k-1$. Then, one of the following cases occurs:
\begin{itemize}
\item[\upshape (a)] $k=2$ and $D$ is a directed cycle of length $\geq 2$.
\item[\upshape (b)] $k=3$ and $D$ is a bidirected cycle of odd length $\geq 3$.
\item[\upshape (c)] $D$ is a bidirected complete graph.
\end{itemize}
\end{theorem}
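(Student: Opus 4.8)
The plan is to imitate the classical proof of Brooks' theorem, adapted to the acyclic‑coloring setting: dispose of small $k$, show that for $k\ge 3$ the digraph must be \emph{bidirected} (every arc $(x,y)$ has its reverse $(y,x)$ present), and then apply Brooks' theorem to the underlying graph. First note that a $k$-critical digraph is strongly connected: a directed cycle lives inside a single strong component, so $\chi(D)=\max_C\chi(D[C])$ over the strong components $C$, and if $D$ had two or more strong components each $D[C]$ would be a proper subdigraph, forcing $\chi(D[C])\le k-1$ and hence $\chi(D)\le k-1$, a contradiction. For $k=2$ the hypothesis gives $d^+(v)=d^-(v)=1$ for every $v$, so $D$ is a disjoint union of directed cycles, and strong connectivity makes it a single directed cycle, of length $\ge 2$ since $\chi(D)=2$; this is case (a). (For $k=1$ criticality forces $D$ to be a single vertex, the degenerate instance of case (c).)

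Now let $k\ge 3$ and suppose, for a contradiction, that $D$ has an arc $e=(u,v)$ whose reverse $(v,u)$ is absent; I will produce a $(k-1)$-coloring of $D$. By criticality $D-e$ has a $(k-1)$-coloring $\varphi$. Since $\chi(D)=k$, $\varphi$ is not a coloring of $D$, so $D$ contains a monochromatic directed cycle, necessarily using $e$; hence $\varphi(u)=\varphi(v)=:c$ and $D-e$ contains a monochromatic directed $(v,u)$-path with all vertices colored $c$. The engine of the argument is that \emph{Kempe swaps are safe}: for distinct colors $c,d$, let $H_{c,d}$ be the subdigraph of $D-e$ induced by the vertices colored $c$ or $d$; recoloring inside a single connected component $S$ of the underlying graph of $H_{c,d}$ — turning its $c$'s into $d$'s and its $d$'s into $c$'s — again yields a $(k-1)$-coloring of $D-e$, because a newly created monochromatic directed cycle would have to use an arc of $H_{c,d}$ joining $S$ to its complement, and no such arc exists. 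Consequently, if for some $d\ne c$ the $(c,d)$-Kempe component of $v$ does not contain $u$, swapping on it changes $\varphi(v)$ to $d$ while leaving $\varphi(u)=c$, which destroys the monochromatic cycle through $e$ and gives a $(k-1)$-coloring of $D$, a contradiction. Similarly, recoloring $v$ to a color $d\ne c$ that appears on no out-neighbor of $v$ in $D-e$, or recoloring $u$ to a color appearing on no in-neighbor of $u$ in $D-e$, would also finish. Ruling out all these moves leaves only a rigid local picture: $u$ and $v$ lie in the same $(c,d)$-Kempe component for every color $d\ne c$, and each of $u$ and $v$ has exactly one in-neighbor and exactly one out-neighbor of each color $d\ne c$ in $D-e$, joined by forced monochromatic paths.

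The step I expect to be the main obstacle — exactly as in the endgame of the proof of the undirected Brooks theorem — is to extract a contradiction from this extremal configuration. The plan is to trace, for each of the $k-2$ colors $d\ne c$, the Kempe chain joining $u$ to $v$; the color-saturation of the neighborhoods of $u$ and $v$ should force these chains to be induced paths which, as $d$ varies, fit together into a clique or (when $k=3$) an odd cycle on the vertices they touch. Since each of these digraphs is bidirected, this contradicts the assumed absence of $(v,u)$, or else it exposes one further safe recoloring. Either way, $D$ is bidirected.

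Finally, when $D$ is bidirected its underlying graph $G$ (with an edge $xy$ whenever $(x,y)\in A(D)$) is connected, because $D$ is strongly connected, and $(k-1)$-regular, because $d^+(v)=d^-(v)=k-1$ and every arc lies in a digon; moreover $\chi(G)=\chi(D)=k$. Brooks' theorem then forces $G=K_k$, or $k-1=2$ and $G$ an odd cycle. Translating back, $D$ is the bidirected complete graph on $k$ vertices, which is case (c), or $k=3$ and $D$ is a bidirected odd cycle of length $\ge 3$, which is case (b). This completes the proof.
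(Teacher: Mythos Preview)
The paper does not prove this statement; it is quoted as a known result of Mohar \cite{Mo10} to motivate the paper's DP-coloring generalizations, so there is no proof here to compare yours against. (The statement does follow quickly from Theorem~\ref{theorem_harut}, also merely cited in the paper: a $k$-critical digraph is connected and has no separating vertex --- any directed cycle through a separating vertex lies entirely on one side, so acyclic colorings of the two sides glue --- hence $D$ is a single block; applying Theorem~\ref{theorem_harut} with constant lists of size $k-1$ and reading off the degree then pins down the three cases.)

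As for your argument itself: the preliminary reductions are correct --- strong connectivity, the case $k\le 2$, the safety of Kempe swaps in $D-e$, and the color saturation of the in- and out-neighborhoods of $u$ and $v$ all go through as you say. But there is a genuine gap exactly where you flag one. For $k\ge 3$ you need to show that $D$ is bidirected, and your proposal to ``trace the Kempe chains\dots these chains should be induced paths which fit together into a clique or odd cycle'' is a plan, not a proof. Nothing you have established forces a $(c,d)$-Kempe component to have a path as its underlying graph: internal vertices of the component may well have three or more neighbors in $H_{c,d}$, and in the directed setting the component is a subdigraph whose in/out structure adds further complications. Nor have you argued that the chains for distinct colors $d$ meet only at $u$ and $v$, which is the crux of the undirected Brooks endgame and already requires several further swaps and a case analysis there. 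Until this step is actually carried out, the claim that $D$ is bidirected is unsupported, and the final appeal to the classical Brooks theorem rests on it.
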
 

Moreover, some results regarding the list-chromatic number can also be transferred to digraphs. Given a digraph $D$, some color set $C$, and a function $L: V(D) \to 2^C$ (a so-called \textbf{list-assignment}), an $L$\textbf{-coloring} of $D$ is a function $\varphi:V(D) \to C$ such that $\varphi(v) \in L(v)$ for all $v \in V(D)$ and  $D[\varphi^{-1}(\{c\})]$ contains no directed cycle for each $c \in C$ (if such a coloring exists, we say that $D$ is  $L$\textbf{-colorable}). Harutyunyan and Mohar \cite{HaMo11} proved the following, thereby extending a theorem of Erd\H{o}s, Rubin and Taylor \cite{ErdRubTay79} for undirected graphs. Recall that a \textbf{block} of a digraph is a maximal connected subdigraph that does not contain a separating vertex.

\begin{theorem}\label{theorem_harut}
Let $D$ be a connected digraph, and let $L$ be a list-assignment such that $|L(v)| \geq \max \{d_D^+(v), d_D^-(v)\}$ for all $v \in V(D)$. Suppose that $D$ is not $L$-colorable. Then, $D$ is Eulerian and for every block $B$ of $D$ one of the following cases occurs:
\begin{itemize}
\item[\upshape (a)] $B$ is a directed cycle of length $\geq 2$.
\item[\upshape (b)] $B$ is a bidirected cycle of odd length $\geq 3$.
\item[\upshape (c)] $B$ is a bidirected complete graph.
\end{itemize}
\end{theorem}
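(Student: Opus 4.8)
The plan is to prove the contrapositive in its strongest form: a connected digraph $D$ is \emph{degree-colorable} --- i.e.\ $L$-colorable for every list-assignment $L$ with $|L(v)|\ge\max\{d_D^+(v),d_D^-(v)\}$ for all $v$ --- whenever $D$ is not Eulerian or some block of $D$ is not of type (a), (b) or (c). I would argue by induction on $|V(D)|$, the engine being the following greedy lemma. For connected $D$ and any prescribed vertex $v_n$, order $V(D)=\{v_1,\dots,v_n\}$ by non-increasing distance from $v_n$ in the underlying graph, so that $v_n$ is last and every $v_i$ with $i<n$ has a later neighbour. Colour $v_1,\dots,v_n$ in this order, each time avoiding the colours whose choice would complete a monochromatic directed cycle through the current vertex inside the already-coloured part. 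If colouring $v_i$ by $c$ does complete such a cycle, that cycle uses an arc from $v_i$ to an earlier out-neighbour coloured $c$ and an arc into $v_i$ from an earlier in-neighbour coloured $c$; distinct forbidden colours need distinct such witnesses, so the number of forbidden colours at $v_i$ is at most the number of its earlier out-neighbours and also at most the number of its earlier in-neighbours. For $i<n$ the vertex $v_i$ has a later neighbour, so one of those two counts drops to at most $d_D^+(v_i)-1$ or to at most $d_D^-(v_i)-1$, in either case strictly below $|L(v_i)|$, and a colour survives. Hence greedy can fail only at $v_n$, and it succeeds there as soon as the number of forbidden colours at $v_n$ --- always at most $\min\{d_D^+(v_n),d_D^-(v_n)\}$ --- is strictly smaller than $|L(v_n)|$.

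Two consequences follow by choosing $v_n$ suitably. If $D$ has a vertex $v$ with $|L(v)|>\max\{d_D^+(v),d_D^-(v)\}$, take $v_n=v$: the forbidden count at $v$ is at most $\min\{d_D^+(v),d_D^-(v)\}\le\max\{d_D^+(v),d_D^-(v)\}<|L(v)|$, so $D$ is $L$-colorable. Likewise, if some vertex $v$ has $d_D^+(v)\ne d_D^-(v)$, take $v_n=v$: then $\min\{d_D^+(v),d_D^-(v)\}<\max\{d_D^+(v),d_D^-(v)\}\le|L(v)|$ and $D$ is again $L$-colorable. Consequently a connected digraph that is not degree-colorable is Eulerian, and every list-assignment $L$ witnessing non-colorability is \emph{tight}: $|L(v)|=d_D^+(v)=d_D^-(v)$ for all $v$. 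This establishes the ``$D$ is Eulerian'' part of the statement; from now on $D$ is Eulerian, $L$ is tight, and it remains to show that every block of $D$ is of type (a), (b) or (c).

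Next I would reduce to the $2$-connected case. Suppose $D$ has a cut vertex; let $B$ be an end-block with unique cut vertex $w$ and put $B'=D-(V(B)\setminus\{w\})$. Since every vertex of $B$ other than $w$ has all its incident arcs inside $B$, counting the arcs of $B$ and using that $D$ is Eulerian forces $d_B^+(w)=d_B^-(w)=:d\ge1$; symmetrically $d_{B'}^+(w)=d_{B'}^-(w)=:d'\ge1$, and $|L(w)|=d+d'$. In particular $w$ has surplus $d\ge1$ inside $B'$, so by the first consequence above $B'$ is $L$-colorable; fix such a colouring $\varphi'$ and let $\psi'$ be its restriction to $V(B')\setminus\{w\}$. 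The set $R$ of colours $c\in L(w)$ for which $\psi'\cup(w\mapsto c)$ is a proper colouring of $B'$ contains $\varphi'(w)$ and satisfies $|R|\ge|L(w)|-\min\{d_{B'}^+(w),d_{B'}^-(w)\}=d=\max\{d_B^+(w),d_B^-(w)\}$, so assigning $R$ as the list of $w$ and keeping $L$ elsewhere yields a valid list $L_B$ for the single block $B$. By the induction hypothesis applied to $B$ (which has fewer vertices than $D$, as $w$ is a cut vertex) either $B$ is of type (a), (b) or (c), or $B$ is $L_B$-colorable by some $\varphi_B$; in the latter case $\varphi_B(w)\in R$, so $\varphi_B$ and $\psi'$ glue to an $L$-colouring of $D$ --- proper because every monochromatic directed cycle of $D$ lies inside a single block --- contradicting non-colorability. (A digon end-block is already of type (a), and a single-arc end-block is excluded by Eulerianity.) Iterating over end-blocks settles every non-$2$-connected case.

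It remains to handle the base case: $D$ is $2$-connected with at least three vertices, Eulerian, $L$ is tight, and $D$ is not $L$-colorable; one must deduce that $D$ is a directed cycle, a bidirected odd cycle, or a bidirected complete graph. This is the heart of the argument and, I expect, the main obstacle. I would attack it by a case analysis paralleling the Brooks--Gallai--Erd\H{o}s--Rubin--Taylor proof for undirected graphs, with digons playing the role of edges. If $D$ is totally bidirected, then a colouring is proper for $D$ exactly when it is proper for the underlying graph, so the undirected theorem of Erd\H{o}s, Rubin and Taylor \cite{ErdRubTay79} forces the underlying graph to be complete or an odd cycle --- cases (c) and (b). If every vertex has out- and in-degree $1$, then $D$ is a single directed cycle --- case (a). In the remaining case $D$ has an asymmetric arc together with a vertex of out- and in-degree at least $2$, and here I would show $D$ is degree-colorable: asymmetric arcs obstruct monochromatic directed cycles, which should let one produce a vertex $v_n$ and two of its neighbours whose colours can be fixed so as to push the forbidden count at $v_n$ below $|L(v_n)|$, mimicking how one colours two non-adjacent neighbours of the last vertex in Brooks' theorem \cite{brooks}. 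The recurring nuisance --- already present in the undirected argument --- is that the relevant lists may be pairwise disjoint, so that a common colour need not exist; this is handled by re-choosing $v_n$ and exploiting an even substructure, and in the directed setting one must moreover track directed reachability, not just adjacency, since that is what controls the forbidden colours. Every step except this last one is a short adaptation of the classical argument.
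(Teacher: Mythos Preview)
First a framing remark: the paper does not give its own proof of this theorem --- it is quoted as a result of Harutyunyan and Mohar. The paper's contribution is the DP-coloring generalization (Theorem~\ref{theorem:main-result} and Theorem~\ref{theorem_harut-extended}), whose proof, specialized to list covers, would yield the statement with (b) relaxed to bidirected cycles of arbitrary length. So there is no ``paper's proof'' to match line by line; the relevant comparison is to the machinery of Section~3.

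That said, your proposal has a genuine gap precisely where you flag it. The greedy lemma and its two consequences (Eulerianity and tightness) are correct and match Proposition~\ref{prop_main-prop}(a). Your cut-vertex reduction is essentially right in spirit but the phrase ``iterating over end-blocks'' does not do what you need: you have shown only that each \emph{end}-block is of type (a)--(c), not that inner blocks are. The fix is to split at the cut vertex $v^*$ into $D^1\cup D^2$, run your gluing argument in \emph{both} directions to produce a degree-list $L_j$ for which $D^j$ is uncolorable ($j=1,2$), and recurse on both sides --- this is exactly the two-sided construction in Case~1 of the proof of Theorem~\ref{theorem:main-result}. The real problem is the $2$-connected case. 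Your trichotomy (bidirected $\Rightarrow$ ERT; all degrees $1$ $\Rightarrow$ directed cycle; otherwise ``mimic Brooks'') is correct as a case split, but the third case is the entire content of the theorem, and your sketch does not supply the structural argument. The obstacle is not merely that lists may be disjoint: one must rule out $2$-connected Eulerian digraphs that mix digons and single arcs, and a single clever choice of $v_n$ with two precoloured neighbours does not suffice. The paper's route through this (Claims~\ref{claim_c3}--\ref{claim_allcyclesinduced}) relies on the shifting operation built from Proposition~\ref{prop_main-prop}(c) and Proposition~\ref{prop_forbidden_config}, and uses it repeatedly to force first that any digon propagates to make $D$ bidirected, and then that in the digon-free case $G(D)$ has no $K_4$, no induced $K_4^-$, no chords, and every induced cycle lifts to a directed cycle --- whence $D$ is itself a directed cycle. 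Your proposal contains none of this; without a concrete mechanism replacing the shifting argument, the final case remains open.
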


Recently, Dvo\v{r}\'ak and Postle \cite{DvoPo15} introduced a new coloring concept, the so-called DP-colorings (they call it correspondence colorings). DP-colorings are an extension of list-colorings, which is based on the fact that the problem of finding an $L$-coloring of a graph $G$ can be transformed to that of finding an appropriate independent set in an auxiliary graph with vertex set $\{(v,c)~|~v \in V(G), c \in L(v)\}$. In Section \ref{sec-dp_intro}, we extend  the concept of DP-coloring from graphs to digraphs. In particular, we introduce the DP-chromatic number of a digraph and show that the DP-chromatic number of a bidirected graph is equal to the DP-chromatic number of its underlying graph (see Corollary~\ref{cor_dp-coincides}). As the main result of our paper we provide a characterization of DP-degree colorable digraphs (see Theorem~\ref{theorem:main-result} and Theorem~\ref{theorem_harut-extended}) that generalizes Theorem~\ref{theorem_harut}.

\section{Basic Terminology}
 For an extensive depiction of digraph terminology we refer the reader to \cite{BaGu08}. Given a digraph $D$, we denote the  \textbf{set of vertices} of $D$ by $V(D)$ and the \textbf{set of arcs} of $D$ by $A(D)$ . The number of vertices of $D$ is called the \textbf{order} of $G$ and ist denoted by $|D|$. Digraphs in this paper may not have loops nor parallel arcs; however, it is allowed that there are two arcs going in opposite directions between two vertices (in this case we say that the arcs are \textbf{opposite}). We denote by $uv$ the arc whose \textbf{initial vertex} is $u$ and whose \textbf{terminal vertex} is $v$; $u$ and $v$ are also said to be the \textbf{end-vertices} of the arc $uv$. Let $X,Y\subseteq V(D)$, then $E_D(X,Y)$ denotes the set of arcs that have their initial vertex in $X$ and their terminal vertex in $Y$. Two vertices $u,v$ are \textbf{adjacent} if at least one of $uv$ and $vu$ belongs to $A(D)$. If $u$ and $v$ are adjacent, we also say that $u$ is a \textbf{neighbor} of $v$ and vice versa. If $uv \in A(D)$, then we say that $v$ is an \textbf{out-neighbor} of $u$ and $u$ is an \textbf{in-neighbor} of $v$. By $N_D^+(v)$ we denote the set of out-neighbors of $v$; by $N_D^-(v)$ the set of in-neighbors of $v$. Given a digraph $D$ and a vertex set $X$, by $D[X]$ we denote the subdigraph of $D$ that is \textbf{induced} by the vertex set $X$, that is, $V(D[X])=X$ and $A(D[X])=\{uv \in A(D) ~|~ u,v \in X\}$. A digraph $D'$ is said to be an induced subdigraph of $D$ if $D'=D[V(D')]$. As usual, if $X$ is a subset of $V(D)$, we define $D-X=D[V(D) \setminus X]$. If $X=\{v\}$ is a singleton, we use $D-v$ rather than $D- \{v\}$. The \textbf{out-degree} of a vertex $v \in V(D)$ is the number of arcs whose inital vertex is $v$; we denote it by $d_D^+(v)$. Similarly, the number of arcs whose terminal vertex is $v$ is called the \textbf{in-degree} of $v$ and is denoted by $d_D^-(v)$. Note that $d_D^+(v)=|N_D^+(v)|$ and $d_D^-(v)=|N_D^-(v)|$ for all $v \in V(D)$. A vertex $v \in V(D)$ is \textbf{Eulerian} if $d_D^+(v)=d_D^-(v)$. Moreover, the digraph $D$ is \textbf{Eulerian} if every vertex of $D$ is Eulerian. By $\Delta^+(D)$ (respectively $\Delta^-(D)$) we denote the \textbf{maximum out-degree} (respectively \textbf{maximum in-degree}) of $D$. A \textbf{matching} in $D$ is a set $M$ of arcs of $D$ with no common end-vertices. A matching in $D$ is \textbf{perfect} if it contains $\frac{|D|}{2}$ arcs.
 
Given a digraph $D$, its \textbf{underlying} graph $G(D)$ is the simple undirected graph with $V(G(D))=V(D)$ and $\{u,v\}\in E(G(D))$ if and only if at least one of $uv$ and $vu$ belongs to $A(D)$. The digraph $D$ is \textbf{(weakly) connected} if $G(D)$ is connected. A \textbf{separating vertex} of a connected digraph $D$ is a vertex $v \in V(D)$ such that $D-v$ is not connected. Furthermore, a \textbf{block} of $D$ is a maximal subdigraph $D'$ of $D$ such that $D'$ has no separating vertex. By $\mathcal{B}(D)$ we denote the set of all blocks of $D$. 
 
A \textbf{directed path} is a non-empty digraph $P$ with $V(P)=\{v_1,v_2,\ldots,v_p\}$ and $A(P)=\{v_1v_2, v_2v_3, \ldots, v_{p-1}v_p\}$ where the $v_i$ are all distinct. Furthermore, a \textbf{directed cycle} of  \textbf{length} $p\geq 2$ is a non-empty digraph $C$ with $V(C)=\{v_1,v_2,\ldots,v_p\}$ and $A(C)=\{v_1v_2,v_2v_3, \ldots, v_{p-1}v_p, v_pv_1\}$ where the $v_i$ are all distinct. A directed cycle of length $2$ is called a \textbf{digon}. If $D$ is a digraph and if $C$ is a cycle in the underlying graph $G(D)$, we denote by $D_C$ the maximal subdigraph of $D$ satisfying $G(D_C)=C$. A \textbf{bidirected} graph is a digraph that can be obtained from an undirected  (simple) graph $G$ by replacing each edge by two opposite arcs, we denote it by $D(G)$. A bidirected complete graph is also called a \textbf{complete digraph}.
 
\section{DP-Colorings of digraphs} \label{sec-dp_intro}
\subsection{The DP-Chromatic Number}
Let $D$ be a digraph. A \textbf{cover} of $D$ is a pair $(X,H)$ satisfying the following conditions:

\begin{itemize}
\item[\textbf{(C1)}] $X: V(D) \to 2^{V(H)}$ is a function that assigns to each vertex $v \in V(D)$ a vertex set $X_v = X(v) \subseteq V(H)$ such that the sets $X_v$ with $v \in V(D)$ are pairwise disjoint.

\item[\textbf{(C2)}] $H$ is a digraph with $V(H) = \bigcup_{v \in V(D)}X_v$ such that each $X_v$ is an independent set of $H$. For each arc $a=uv \in A(D)$, the arcs from $E_H(X_u,X_v)$ form a possibly empty matching $M_a$ in $H[X_u \cup X_v]$. Furthermore, the arcs of $H$ are $A(H)=\bigcup_{a \in A(D)} M_a$. 
\end{itemize}

Now let $(X,H)$ be a cover of $D$. A vertex set $T \subseteq V(H)$ is a \textbf{transversal} of $(X,H)$ if $|T \cap X_v|=1$ for each vertex $v \in V(D)$. An \textbf{acyclic transversal} of $(X,H)$ is a transversal $T$ of $(X,H)$ such that $H[T]$ contains no directed cycle. An acyclic transversal of $(X,H)$ is also called an $(X,H)$\textbf{-coloring} of $D$; the vertices of $H$ are called \textbf{colors}. We say that $D$ is $(X,H)$\textbf{-colorable} if $D$ admits an $(X,H)$-coloring. Let $f: V(D) \to \mathbb{N}_0$ be a function. Then, $D$ is said to be \textbf{DP-}$f$\textbf{-colorable} if $D$ is $(X,H)$-colorable for every cover $(X,H)$ of $D$ satisfying $|X_v| \geq f(v)$ for all $v \in V(D)$ (we will call such a cover an $f$\textbf{-cover}). If $D$ is DP-$f$-colorable for a function $f$ such that $f(v)=k$ for all $v \in V(D)$, then we say that $D$ is \textbf{DP-}$k$\textbf{-colorable}. The \textbf{DP-chromatic number} $\chi_{\text{DP}}(D)$ is the smallest integer $k \geq 0$ such that $D$ is DP-$k$-colorable. 

DP-coloring was originally introduced for undirected graphs by Dvor{\'a}k and Postle \cite{DvoPo15}. Let $G$ be an undirected (simple) graph. A \textbf{cover} of $G$ is a pair $(X,H)$ satisfying (C1) and (C2) where the matching $M_e$ associated to an edge $e=uv \in E(G)$ is an undirected matching between $X_u$ and $X_v$ (and $H$ is therefore an undirected graph). An $(X,H)$\textbf{-coloring} of $G$ is an \textbf{independent transversal} $T$ of $(X,H)$, \textit{i.e.}, $T$ is a transversal of $(X,H)$ such that $H[T]$ is edgeless. The definitions of DP-$f$-colorable, DP-$k$-colorable and the DP-chromatic number are analogous.

We now investigate the relation between undirected and directed DP-colorings.

\begin{theorem}\label{thm:undirected}
 A bidirected graph $D$ is DP-$f$-colorable if and only if its underlying undirected graph $G(D)$ is DP-$f$-colorable.
\end{theorem}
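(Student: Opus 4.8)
The plan is to prove the two implications separately, in each direction transforming a cover of one of the two objects into a cover of the other so that acyclic transversals correspond to independent transversals. Write $G=G(D)$; for an arc $a=uv$ of a digraph cover and an edge $e=\{u,v\}$ of a graph cover, I denote the associated matchings by $M_{uv}$ and $M_e$.

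\emph{($D$ DP-$f$-colorable $\Rightarrow$ $G$ DP-$f$-colorable.)} I would start from an arbitrary undirected $f$-cover $(X,H')$ of $G$ and \emph{symmetrise} it to a directed cover $(X,\widehat H)$ of $D$: keep the parts $X_v$ unchanged, and for every $e=\{u,v\}\in E(G)$ and every $\{x,y\}\in M_e$ with $x\in X_u$, $y\in X_v$, put \emph{both} arcs $xy$ and $yx$ into $\widehat H$, declaring $xy\in M_{uv}$ and $yx\in M_{vu}$. Because $D$ is bidirected, $uv,vu\in A(D)$, so this is well defined, and the conditions (C1), (C2) and $|X_v|\ge f(v)$ are immediate. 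Since each edge of $H'$ turns into a digon of $\widehat H$ on the same pair of vertices, every acyclic transversal of $(X,\widehat H)$ is automatically an independent transversal of $(X,H')$ (a digon inside the transversal would be a directed $2$-cycle). Hence, applying DP-$f$-colorability of $D$ to $(X,\widehat H)$ produces an independent transversal of $(X,H')$; as $(X,H')$ was arbitrary, $G$ is DP-$f$-colorable.

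\emph{($G$ DP-$f$-colorable $\Rightarrow$ $D$ DP-$f$-colorable.)} Here I would take an arbitrary directed $f$-cover $(X,H)$ of $D$ and \emph{project} it to an undirected $f$-cover $(X,H')$ of $G$ along a fixed \emph{acyclic} orientation $\vec G$ of $G$. Such an orientation exists for every finite simple graph, and since $D$ is bidirected every arc of $\vec G$ is also an arc of $D$; for each edge $\{u,v\}\in E(G)$, oriented say $u\to v$ in $\vec G$, I put the undirected edge $\{x,y\}$ into $H'$ for each $xy\in M_{uv}$. One checks that $(X,H')$ is an $f$-cover of $G$, so by hypothesis it has an independent transversal $T$, and the claim is that $T$ is an acyclic transversal of $(X,H)$. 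Suppose $H[T]$ contained a directed cycle $x_1\to x_2\to\cdots\to x_k\to x_1$. Since $T$ meets each part exactly once, the $x_i$ lie in pairwise distinct parts $X_{v_1},\dots,X_{v_k}$; then $x_ix_{i+1}\in M_{v_iv_{i+1}}$ for all $i$ (indices mod $k$) and all $\{v_i,v_{i+1}\}$ are edges of $G$. Because $\vec G$ has no directed cycle, these edges cannot all be oriented as $v_{i+1}\to v_i$, so $\vec G$ orients some $\{v_i,v_{i+1}\}$ as $v_i\to v_{i+1}$; then $\{x_i,x_{i+1}\}\in E(H')$ with $x_i,x_{i+1}\in T$, contradicting independence. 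Thus $D$ is $(X,H)$-colorable for every $f$-cover, i.e.\ DP-$f$-colorable, and the two implications give the theorem.

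The routine parts are the verifications that the constructed pairs $(X,\widehat H)$ and $(X,H')$ really are covers. The delicate point — and the step I expect to be the main obstacle — is the converse implication: simply forgetting arc directions, or keeping for each pair only one of $M_{uv},M_{vu}$ with no regard to acyclicity, fails, since a long directed cycle of $H$ need contain no digon and could disappear entirely in $H'$. Projecting along a fixed \emph{acyclic} orientation of $G$ is precisely what forces every directed cycle of $H$ to leave an edge inside $H'$, and this is the only place where bidirectedness of $D$ is used: it guarantees that an acyclic orientation of $G$ can be chosen whose arcs all survive in $D$.
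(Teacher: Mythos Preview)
Your proof is correct. The first implication is exactly the paper's argument. For the converse, however, you take a genuinely different and arguably cleaner route: the paper proves the contrapositive via an extremal argument, choosing among all bad $f$-covers of $D$ one that is \emph{locally symmetric} (meaning $M_{uv}$ and $M_{vu}$ are opposite) around as many vertices as possible, and then showing that any attempt to symmetrise further would produce an acyclic transversal, forcing the bad cover to be fully symmetric and hence the underlying undirected cover to be bad for $G$. Your approach is direct and constructive: you project $(X,H)$ onto $G$ by retaining only the matchings $M_{uv}$ for arcs $uv$ of a fixed acyclic orientation $\vec G$, and then observe that any directed cycle in $H[T]$ must traverse at least one edge in the direction selected by $\vec G$, violating independence of $T$ in $H'$. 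The acyclic-orientation trick avoids the extremal step entirely; conversely, the paper's argument yields the extra structural fact that any arc-minimal bad cover of a bidirected graph is itself symmetric (bidirected), which your argument does not give but which the paper does not exploit later either.
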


\begin{proof} We prove the two implications separately. First assume that $D$ is DP-$f$-colorable. In order to show that $G=G(D)$ is DP-$f$-colorable, let $(X,H_G)$ be an $f$-cover of $G$ and let  $H_D=D(H_G)$ be the bidirected  graph associated to $H_G$. Then, $(X,H_D)$ is an $f$-cover of $D$. By assumption, there is an acyclic transversal $T$ of $(X,H_D)$. As $H_D$ is bidirected, $T$ is an  independent transversal of $(X,H_G)$ and so $G$ is DP-$f$-colorable.
 
The converse is less obvious since even if $D$ is bidirected, its covers do not have to be bidirected. Let $(X,H_D)$ be a cover of a bidirected graph $D$. We say that the cover is \textbf{symmetric} if and only if for every pair of opposite arcs $uv$ and $vu$ in $D$, the matchings $M_{uv}$ and $M_{vu}$ are \textbf{opposite}, that is, each arc in $M_{vu}$ is opposite to some arc in $M_{uv}$. We say that the cover is \textbf{locally-symmetric} around a given vertex $v\in V(D)$ if $M_{uv}$ and $M_{vu}$ are opposite for every vertex $u$ adjacent to $v$. 

Let $f$ be such that $D$ is not DP-$f$-colorable. We claim that $G=G(D)$ is not DP-$f$-colorable. To prove this, we choose an $f$-cover $(X,H_D)$ of $D$ for which $D$ is not $(X,H_D)$-colorable such that $(X,H_D)$ is locally-symmetric around a maximum number of vertices. Suppose that there exists a vertex $v\in V(D)$ around which $(X,H_D)$ is not locally-symmetric. Let $(X,H'_D)$ be the $f$-cover of $D$ obtained from $(X,H_D)$ by replacing $M_{uv}$ by the opposite of $M_{vu}$ for every vertex $u$ adjacent to $v$ (note that this will not affect vertices that are already locally symmetric). By the the choice of $(X,H_D)$, there exists an acyclic transversal $T$ of $(X,H'_D)$. Then, $T$ is also a transversal of $(X,H_D)$, and, since $D$ is not $(X,H_D)$-colorable, $H_D[T]$ contains a directed cycle $C$.

As $H_D - X_v$ is isomorphic to $H_D' - X_v$, it follows from the choice of $T$ that $C$ must contain a vertex $x \in X_v$. Hence, there exists a vertex $u$ adjacent to $v$ in $D$ and a vertex $x'\in X_u$ such that $xx'\in M_{vu}$ and $x'\in T$. Since the graph $H'_D$ contains both the arcs $xx'$ and $x'x$, $H'_D[\{x,x'\}]$ is a digon and, hence, $H'_D[T]$ also contains a directed cycle. Thus, $(X,H'_D)$ is an $f$-cover of $D$ for which $D$ is not $(X,H'_D)$-colorable, but $(X,H'_D)$ is locally symmetric around strictly more vertices than $(X,H_D)$, contradicting the choice of $(X,H_D)$. Consequently, $(X,H_D)$ is symmetric and, as a consequence, for $H_G=G(H_D)$, the pair $(X,H_G)$ is an $f$-cover of the underlying graph $G=G(D)$ such that $G$ is not $(X,H_G)$-colorable, which implies that $G$ is not DP-$f$-colorable.
\end{proof}

An important property of the chromatic number of a digraph is that the chromatic number of a bidirected graph coincides with the chromatic number of its underlying graph. Theorem \ref{thm:undirected} implies that this property also holds for DP-coloring:

\begin{corollary}\label{cor_dp-coincides}
The DP-chromatic number of a bidirected graph is equal to the DP-chromatic number of its underlying graph.
\end{corollary}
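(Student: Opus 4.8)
The plan is to derive this directly from Theorem~\ref{thm:undirected} with essentially no extra work, since the DP-chromatic number is defined as an optimization over the constant functions $f \equiv k$. Concretely, write $G = G(D)$ for the underlying graph of the bidirected graph $D$. For a fixed integer $k \geq 0$, take $f$ to be the constant function $f(v) = k$ for all $v \in V(D) = V(G)$; note that this same $f$ serves as a function on $V(G)$ as well, since $D$ and $G$ share a vertex set. By definition, $D$ is DP-$k$-colorable precisely when $D$ is DP-$f$-colorable, and likewise $G$ is DP-$k$-colorable precisely when $G$ is DP-$f$-colorable.

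Now I would simply invoke Theorem~\ref{thm:undirected}: $D$ is DP-$f$-colorable if and only if $G$ is DP-$f$-colorable. Chaining these equivalences gives that, for every $k \geq 0$, $D$ is DP-$k$-colorable if and only if $G$ is DP-$k$-colorable. Since $\chi_{\text{DP}}(D)$ is by definition the least $k$ for which $D$ is DP-$k$-colorable, and similarly for $G$, the two sets $\{k \geq 0 : D \text{ is DP-}k\text{-colorable}\}$ and $\{k \geq 0 : G \text{ is DP-}k\text{-colorable}\}$ coincide, hence have the same minimum. Therefore $\chi_{\text{DP}}(D) = \chi_{\text{DP}}(G(D))$.

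There is essentially no obstacle here; the corollary is a formal consequence of the theorem. The only point that warrants a sentence of care is the observation that a constant function $f$ on the common vertex set $V(D) = V(G(D))$ can be read simultaneously as an $f$-cover constraint for $D$ and for $G(D)$, so that Theorem~\ref{thm:undirected} applies verbatim; once that is noted, the argument is immediate. (One might also remark that monotonicity — if $D$ is DP-$k$-colorable then it is DP-$(k{+}1)$-colorable — shows these colorability sets are upward closed, so their minima are well defined, but this is already implicit in the definition of $\chi_{\text{DP}}$.)
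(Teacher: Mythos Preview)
Your proposal is correct and matches the paper's own treatment: the paper states that Corollary~\ref{cor_dp-coincides} is an immediate consequence of Theorem~\ref{thm:undirected}, and your argument spells out exactly this specialization to constant $f\equiv k$. There is nothing to add.
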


DP-colorings are of special interest because they constitute a generalization of list-colorings: let $D$ be a digraph, let $C$ be a color set, and let $L:V(D) \to 2^C$ be a list-assignment. We define a cover $(X,H)$ of $D$ as follows: let $X_v=\{v\} \times L(v)$ for all $v \in V(D)$,  $V(H)=\bigcup_{v \in V(D)}X_v$, and $A(H)=\{(v,c)(v',c')~|~vv' \in A(D) \text{ and } c=c'\}$. It is obvious that $(X,H)$ indeed is a cover of $D$. Moreover, if $\varphi$ is an $L$-coloring of $D$, then $T=\{(v,\varphi(v))~|~ v \in V(D)\}$ is an acyclic transversal of $(X,H)$. On the other hand, given an acyclic transversal $T=\{(v_1,c_1),\ldots,(v_n,c_n)\}$ of $H$, we obtain an $L$-coloring of $D$ by coloring the vertex $v_i$ with $c_i$ for $i \in \{1,2,\ldots,n\}$. Thus, finding an $L$-coloring of $D$ is equivalent to finding an acyclic transversal of $(X,H)$. Hence, the \textbf{list-chromatic number} $\chi_\ell$ of $D$, which is the smallest integer $k$ such that $D$ admits an $L$-coloring for every list-assignment $L$ satisfying $|L(v)|\geq k$ for all $v \in V(D)$, is always at most the DP-chromatic number $\chi_{\text{DP}}(D)$. Moreover, by using a sequential coloring algorithm it is easy to verify that $\chi_{\text{DP}}(D) \leq \max\{\Delta^+(D),\Delta^-(D)\} + 1$. Hence, we obtain the following sequence of inequalities:
$$\chi(D) \leq \chi_\ell(D) \leq \chi_{\text{DP}}(D) \leq \max\{\Delta^+(D),\Delta^-(D)\} + 1.$$

\subsection{DP-Degree Colorable Digraphs}

We say that a digraph $D$ is \textbf{DP-degree colorable} if $D$ is $(X,H)$-colorable whenever $(X,H)$ is a cover of $D$ such that $|X_v| \geq \max\{d_D^+(v), d_D^-(v)\}$ for all $v \in V(D)$. In the following, we will give a characterization of the non DP-degree-colorable digraphs as well as a characterization of the edge-minimal corresponding 'bad' covers (see Theorem~\ref{theorem:main-result}). Clearly, it suffices to do this only for connected digraphs. For undirected graphs, those characterizations were given by Kim and Ozeki \cite{KiOz17}; for hypergraphs it was done by Schweser~\cite{Schwes18}. 

A \textbf{feasible configuration} is a triple $(D,X,H)$ consisting of a connected digraph $D$ and a cover $(X,H)$ of $D$. A feasible configuration $(D,X,H)$ is said to be \textbf{degree-feasible} if $|X_v| \geq \max\{d^+_D(v), d^-_D(v)\}$ for each vertex $v \in V(D)$. Furthermore, $(D,X,H)$ is \textbf{colorable} if $D$ is $(X,H)$-colorable, otherwise it is called \textbf{uncolorable}. The next proposition lists some basic properties of feasible configurations; the proofs are straightforward and left to the reader.

\begin{proposition} \label{prop_feas-config}
Let $(D,X,H)$ be a feasible configuration. Then, the following statements hold. 
\begin{itemize}
\item[\upshape (a)] For every vertex $v \in V(D)$ and every vertex $x \in X_v$, we have $d_H^+(x) \leq d_D^+(v)$ and $d_H^-(x) \leq d_D^-(v)$.
\item[\upshape (b)] Let $H'$ be a spanning subdigraph of $H$. Then, $(D,X,H')$ is a feasible configuration. If $(D,X,H)$ is colorable, then $(D,X,H')$ is colorable, too. Furthermore, $(D,X,H)$ is degree-feasible if and only if $(D,X,H')$ is degree-feasible.
\end{itemize}
\end{proposition}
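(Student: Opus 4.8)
For part (a), the plan is to trace the origin of an arc incident with a fixed colour $x\in X_v$ through condition \textbf{(C2)}. Suppose $xy\in A(H)$. By \textbf{(C2)} we have $A(H)=\bigcup_{a\in A(D)}M_a$, so $xy\in M_a$ for some arc $a=pq\in A(D)$, and $M_a\subseteq E_H(X_p,X_q)$ forces $x\in X_p$; since the sets $X_w$ ($w\in V(D)$) are pairwise disjoint by \textbf{(C1)} and $x\in X_v$, this gives $p=v$, i.e.\ $a=vq$ with $q\in N_D^+(v)$. As $M_{vq}$ is a matching in $H[X_v\cup X_q]$, the vertex $x$ is the initial vertex of at most one arc of $M_{vq}$. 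Summing over the at most $d_D^+(v)$ choices of $q$ yields $d_H^+(x)\le d_D^+(v)$, and the symmetric argument (looking at arcs of the form $q v\in A(D)$ and in-neighbours of $v$) gives $d_H^-(x)\le d_D^-(v)$.

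For part (b), the plan is first to verify \textbf{(C1)} and \textbf{(C2)} for $(D,X,H')$. Since $H'$ is a spanning subdigraph of $H$, we have $V(H')=V(H)$ and $A(H')\subseteq A(H)$, so \textbf{(C1)} is literally unchanged and each $X_v$, being independent in $H$, is still independent in $H'$. For each arc $a=uv\in A(D)$ set $M_a':=M_a\cap A(H')$; this is a submatching of $M_a$, hence a matching in $H'[X_u\cup X_v]$, and one checks $M_a'=E_{H'}(X_u,X_v)$ because any arc of $H'$ between $X_u$ and $X_v$ is an arc of $H$ between these sets and therefore already lies in $M_a$. Finally $\bigcup_{a\in A(D)}M_a'=A(H')$: the inclusion ``$\subseteq$'' is clear, and ``$\supseteq$'' holds since $A(H')\subseteq A(H)=\bigcup_a M_a$, so every arc of $H'$ lies in some $M_a$, hence in $M_a'$. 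Thus $(D,X,H')$ is a feasible configuration.

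It then remains to handle the two transfer statements, both of which are immediate. If $T$ is an acyclic transversal of $(X,H)$, then $T$ is a transversal of $(X,H')$ (the sets $X_v$ are the same), and $H'[T]$ is a subdigraph of $H[T]$ because $A(H')\subseteq A(H)$; hence $H'[T]$ contains no directed cycle and $T$ is an acyclic transversal of $(X,H')$, so colorability of $(D,X,H)$ implies colorability of $(D,X,H')$. For the last claim, degree-feasibility is the condition $|X_v|\ge\max\{d_D^+(v),d_D^-(v)\}$ for all $v\in V(D)$, which depends only on $X$ and $D$ and not on the cover digraph; since these coincide for $(D,X,H)$ and $(D,X,H')$, the two configurations are degree-feasible simultaneously. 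The only point requiring any care is the bookkeeping $A(H')=\bigcup_a M_a'$ in the verification of \textbf{(C2)}, and even this is routine once one observes $A(H')\subseteq A(H)=\bigcup_a M_a$; there is no substantial obstacle.
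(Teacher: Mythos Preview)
Your proof is correct; the paper itself does not prove this proposition but explicitly states that ``the proofs are straightforward and left to the reader.'' Your verification of \textbf{(C1)}--\textbf{(C2)} for the spanning subdigraph and your counting argument for part (a) are exactly the routine checks one would expect, so there is nothing to compare.
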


The above proposition leads to the following concept. We say that a feasible configuration $(D,X,H)$ is \textbf{minimal uncolorable} if $(D,X,H)$ is uncolorable, but $(D,X,H-a)$ is colorable for each arc $a \in A(H)$. As usual, $H-a$ denotes the digraph obtained from $H$ by deleting the arc $a$. Clearly, if $|D| \geq 2$ and if $\tilde{H}$ is the arcless spanning digraph of $H$, then $(D,X,\tilde{H})$ is colorable. Thus, it follows from the above Proposition that if $(D,X,H)$ is an uncolorable feasible configuration, then there is a spanning subdigraph $H'$ of $H$ such that $(D,X,H')$ is a minimal uncolorable feasible configuration.

In order to characterize the class of minimal uncolorable degree-feasible configurations, we first need to introduce three basic types of degree-feasible configurations.

We say that $(D,X,H)$ is a \textbf{K-configuration} if $D$ is a complete digraph of order $n$ for some $n \geq 1$, and $(X,H)$ is a cover of $D$ such that the following conditions hold:
\begin{itemize}
\item $|X_v|=n-1$ for all $v \in V(D)$,
\item for each $v \in V(D)$ there is a labeling $x_v^1,x_v^2,\ldots,x_v^{n-1}$ of the vertices of $X_v$ such that $H^i=H[\{x_v^i~|~ v  \in V(D)\}]$ is a complete digraph for $i \in \{1,2,\ldots,n-1\}$, and
\item $H=H^1 \cup H^2 \cup \ldots \cup H^{n-1}$.
\end{itemize}
An example of a K-configuration with $n=4$ is given in Figure~\ref{fig_directed-config}. It is an easy exercise to check that each $K$-configuration is a minimal uncolorable degree-feasible configuration. Note that for $|D|=1$, we have $X_v=\varnothing$ for the only vertex $v \in V(D)$ and $H=\varnothing$ (and so there is no transversal of $(X,H)$).

We say that $(D,X,H)$ is a \textbf{C-configuration} if $D$ is a directed cycle of length $n \geq 2$ and $(X,H)$ is a cover such that $X_v=\{x_v\}$ for all $v \in V(D)$ and $A(H)=\{x_vx_u ~|~ vu \in A(D)\}$. Note that in this case, $H$ is a copy of $D$. Clearly, each C-configuration is a minimal uncolorable degree-feasible configuration.

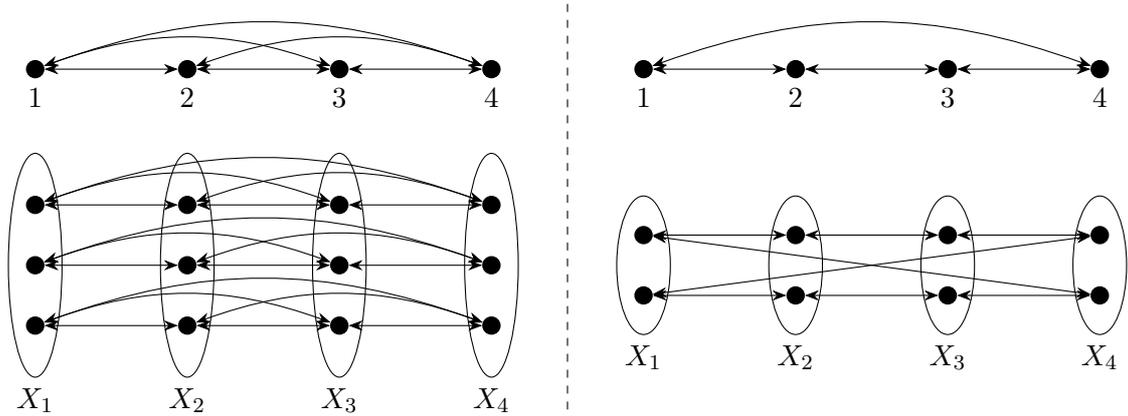
\begin{figure}[htp]

\resizebox{\linewidth}{!}{
\begin{tikzpicture}

\node[vertex, label={below:$1$}] (u1){};
\node[vertex, label={below:$2$}, xshift=2cm] (u2) at (u1) {};
\node[vertex, label={below:$3$},  xshift=2cm] (u3) at (u2) {};
\node[vertex,label={below:$4$},  xshift=2cm] (u4) at (u3) {};

\path[arrows={[scale=1.1]Stealth}-{[scale=1.1]Stealth}]
		(u1)
		edge (u2)
		edge [bend left] (u3)
		edge[bend left] (u4)	
		(u2)
		edge[] (u3)
		edge[ bend left] (u4)
		
		(u3)
		edge(u4);

\begin{scope}[bend angle=10]
\node[vertex, yshift=-1.8cm] (u11) at (u1) {};
\node[vertex, yshift=-.8cm] (u12) at (u11){};
\node[vertex, yshift=-.8cm] (u13) at (u12){};

\node[vertex, yshift=-1.8cm] (u21) at (u2) {};
\node[vertex, yshift=-.8cm] (u22) at (u21){};
\node[vertex, yshift=-.8cm] (u23) at (u22){};

\node[vertex, yshift=-1.8cm] (u31) at (u3) {};
\node[vertex, yshift=-.8cm] (u32) at (u31){};
\node[vertex, yshift=-.8cm] (u33) at (u32){};

\node[vertex, yshift=-1.8cm] (u41) at (u4) {};
\node[vertex, yshift=-.8cm] (u42) at (u41){};
\node[vertex, yshift=-.8cm] (u43) at (u42){};
\end{scope}

\path[arrows={[scale=1.1]Stealth}-{[scale=1.1]Stealth}]
(u11) edge (u21)
	  edge[ bend left] (u31)
	  edge[ bend left] (u41)
(u12) edge (u22)
	  edge[ bend left] (u32)
	  edge[ bend left] (u42)

(u13) edge (u23)
	  edge[ bend left] (u33)
	  edge[ bend left] (u43)
	  
(u21) edge (u31)
	  edge[ bend left] (u41)

(u22) edge (u32)
	  edge[ bend left] (u42)
	  
(u23) edge (u33)
		   edge[bend left] (u43)	  

(u31) edge (u41)

(u32) edge (u42)
(u33) edge (u43);

\begin{pgfonlayer}{background}
\node[ellipse, draw=black, fit=(u11)(u12)(u13), label={below:$X_1$}]{};
\node[ellipse, draw=black, fit=(u21)(u22)(u23), label={below:$X_2$}]{};
\node[ellipse, draw=black, fit=(u31)(u32)(u33), label={below:$X_3$}]{};
\node[ellipse, draw=black, fit=(u41)(u42)(u43), label={below:$X_4$}]{};
\end{pgfonlayer}

\node (h1) at (u4) [xshift=1cm, yshift=1cm]{};
\node (h2) at (u43)[xshift=1cm, yshift=-1.3cm]{};

\draw[dashed] (h1) -- (h2);

\begin{scope}[xshift=8cm]
\node[vertex, label={below:$1$}] (u1){};
\node[vertex, label={below:$2$}, xshift=2cm] (u2) at (u1) {};
\node[vertex, label={below:$3$},  xshift=2cm] (u3) at (u2) {};
\node[vertex,label={below:$4$},  xshift=2cm] (u4) at (u3) {};

\path[arrows={[scale=1.1]Stealth}-{[scale=1.1]Stealth}]
		(u1)
		edge (u2)
		edge[bend left](u4)
		(u2)
		edge (u3)
		
		(u3)
		edge(u4);

\begin{scope}[bend angle=10]
\node[vertex, yshift=-2.2cm] (u11) at (u1) {};
\node[vertex, yshift=-.8cm] (u12) at (u11){};

\node[vertex, yshift=-2.2cm] (u21) at (u2) {};
\node[vertex, yshift=-.8cm] (u22) at (u21){};

\node[vertex, yshift=-2.2cm] (u31) at (u3) {};
\node[vertex, yshift=-.8cm] (u32) at (u31){};

\node[vertex, yshift=-2.2cm] (u41) at (u4) {};
\node[vertex, yshift=-.8cm] (u42) at (u41){};
\end{scope}

\path[arrows={[scale=1.1]Stealth}-{[scale=1.1]Stealth}]
(u11) edge (u21)
	  edge (u42)
(u12) edge (u22)
	  edge(u41)
(u21) edge (u31)
(u22) edge (u32)
(u31) edge (u41)
(u32) edge (u42);

\begin{pgfonlayer}{background}
\node[ellipse, draw=black, fit=(u11)(u12), label={below:$X_1$}]{};
\node[ellipse, draw=black, fit=(u21)(u22), label={below:$X_2$}]{};
\node[ellipse, draw=black, fit=(u31)(u32), label={below:$X_3$}]{};
\node[ellipse, draw=black, fit=(u41)(u42), label={below:$X_4$}]{};
\end{pgfonlayer} 

\end{scope}
\end{tikzpicture}}

\caption{A K-configuration and a BC-configuration for digraphs}
\label{fig_directed-config}
\end{figure}

We say that $(D,X,H)$ is an \textbf{odd BC-configuration} if $D$ is a bidirected cycle of odd length $\geq 5$ and $(X,H)$ is a cover of $D$ such that the following conditions are fulfilled:
\begin{itemize}
\item $|X_v|=2$ for all $v \in V(D)$,
\item for each $v \in V(D)$ there is a labeling $x_v^1,x_v^2$ of the vertices of $X_v$ such that $A(H)=\{x_v^ix_w^i ~|~ vw \in A(D) \text{ and } i \in \{1,2\}\}.$
\end{itemize}
Note that $H^i=H[\{x_v^i ~|~ v \in V(D)\}]$ is a bidirected cycle in $H$ and $H=H^1 \cup H^2$. It is easy to verify that every odd BC-configuration is a minimal uncolorable degree-feasible configuration.

We call $(D,X,H)$ an \textbf{even BC-configuration} if $D$ is a bidirected cycle of even length $\geq 4$, $(X,H)$ is a cover of $D$, and there is an arc $uu' \in A(D)$ such that:
\begin{itemize}
\item $|X_v|=2$ for all $v \in V(D)$,
\item for each $v \in V(D)$ there is a labeling $x_v^1,x_v^2$ of the vertices of $X_v$ such that $A(H)=\{x_v^ix_w^i ~|~ \{v,w\} \neq \{u,u'\},vw \in A(D),  \text{ and } i \in \{1,2\}\} \cup \{x_u^1x_{u'}^2, x_u^2x_{u'}^1, x_{u'}^2x_u^1, x_{u'}^1x_u^2\}$
\end{itemize}
Again, it is easy to check that every even BC-configuration is a minimal uncolorable degree-feasible configuration. By a \textbf{BC-configuration} we either mean an even or an odd BC-configuration.

Our aim is, to show that we can construct every minimal uncolorable degree-feasible configuration from the three basic configurations by using the following operation. Let $(D^1,X^1,H^1)$ and $(D^2,X^2,H^2)$ be two feasible configurations, which are \textbf{disjoint}, that is, $V(D^1) \cap V(D^2) = \varnothing$ and $V(H^1) \cap V(H^2)= \varnothing$. Furthermore, let $D$ be the digraph obtained from $D^1$ and $D^2$ by identifying two vertices $v^1 \in V(D^1)$ and $v^2 \in V(D^2)$ to a new vertex $v^*$. Finally, let $H=H^1 \cup H^2$ and let $X:V(D) \to 2^{V(H)}$ be the mapping such that 
$$X_v=\begin{cases}
X^1_{v^1} \cup X^2_{v^2} & \text{if } v=v^*,\\
X_v^i & \text{if } v \in V(D^i) \setminus \{v^i\} \text{ and } i \in \{1,2\}
\end{cases}$$
for $v \in V(H)$. Then, $(D,X,H)$ is a feasible configuration and we say that $(D,X,H)$ is obtained from $(D^1,X^1,H^1)$ and $(D^2,X^2,H^2)$ by \textbf{merging} $v^1$ and $v^2$ to $v^*$.  

Now we define the class of \textbf{constructible configurations} as the smallest class of feasible configurations that contains each K-configuration, each C-configuration and each BC-configuration and that is closed under the merging operation. We say that a digraph is a \textbf{DP-brick} if it is either a complete digraph, a directed cycle, or a bidirected cycle. 
Thus, if $(D,X,H)$ is a constructible configuration, then each block of $D$ is a DP-brick. The next proposition is straightforward and left to the reader.

\begin{proposition} \label{prop_block-structure}
Let $(D,X,H)$ be a constructible configuration. Then, for each block $B \in \mathcal{B}(D)$ there is a uniquely determined cover $(X^B,H^B)$ of $B$ such that the following statements hold:
\begin{itemize}
\item[\upshape (a)] For each block $B \in \mathcal{B}(D)$, the triple $(B,X^B,H^B)$ is a {\upshape K}-configuration, a {\upshape C}-configuration, or a {\upshape BC}-configuration.
\item[\upshape (b)] The digraphs $H^B$ with $B \in \mathcal{B}(D)$ are pairwise disjoint and $H = \bigcup_{B \in \mathcal{B}(D)} H^B$.
\item[\upshape (c)] For every vertex $v$ from $V(D)$ we have $X_v = \displaystyle{\bigcup_{B \in \mathcal{B}(D), v \in V(B)}X_v^B}$.
\end{itemize}
\end{proposition}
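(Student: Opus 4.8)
The plan is to induct on the number $m$ of applications of the merging operation used to build the constructible configuration $(D,X,H)$. If $m=0$, then $(D,X,H)$ is itself a K-, C-, or BC-configuration, so $D$ is a DP-brick; a complete digraph, a directed cycle of length $\ge 2$, and a bidirected cycle of length $\ge 3$ are all connected and have no separating vertex, so $D$ has the single block $B=D$. Taking $(X^B,H^B)=(X,H)$ gives (a), (b) and (c) at once, and it is the only choice: (b) forces $H^B=H$, and then (c) forces $X^B_v=X_v$.

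For the inductive step, write $(D,X,H)$ as the merge of constructible configurations $(D^1,X^1,H^1)$ and $(D^2,X^2,H^2)$ along $v^1$ and $v^2$, identified to $v^*$; each summand uses fewer than $m$ merges, so the proposition applies to it. If one of the $D^i$ is a single vertex, then $X^i_{v^i}=\varnothing$ and $H^i$ is empty, the merge only relabels a vertex, and the claim for $D$ reduces to the claim for the other summand; so assume $|D^1|,|D^2|\ge 2$. The structural fact I would prove first is that $\mathcal{B}(D)$ consists exactly of the blocks of $D^1$ and the blocks of $D^2$, where a block of $D^i$ through $v^i$ is read as a block of $D$ after renaming $v^i$ as $v^*$: indeed $v^*$ is then a separating vertex of $D$ whose deletion separates $V(D^1)\setminus\{v^1\}$ from $V(D^2)\setminus\{v^2\}$, so no connected subdigraph without a separating vertex can meet both sides, while within each $D^i$ the block structure is unchanged (and no single-vertex blocks arise because each $D^i$ is connected with $|D^i|\ge 2$). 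For a block $B$ of $D$ coming from a block $B^i$ of $D^i$, set $(X^B,H^B)$ equal to $(X^{B^i},H^{B^i})$ with $v^i$ relabelled $v^*$; since $X^i_{v^i}\subseteq X_{v^*}$ by the definition of the merge, this is a cover of $B$.

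Properties (a)--(c) now follow by bookkeeping against the inductive hypotheses for $D^1$ and $D^2$. Part (a) is inherited verbatim. For (b): the hypothesis says $\{H^{B^1}:B^1\in\mathcal{B}(D^1)\}$ is a pairwise-disjoint family with union $H^1$, and likewise for $D^2$; as $V(H^1)\cap V(H^2)=\varnothing$ is part of the merge, the family $\{H^B:B\in\mathcal{B}(D)\}$ is pairwise disjoint with union $H^1\cup H^2=H$. For (c): a vertex $v\ne v^*$ belongs to $V(D^i)\setminus\{v^i\}$ for exactly one $i$, the blocks of $D$ through $v$ are precisely the blocks of $D^i$ through $v$, and $X_v=X^i_v=\bigcup_{B\ni v}X^B_v$ by the hypothesis; for $v=v^*$, the blocks of $D$ through $v^*$ are the blocks of $D^1$ through $v^1$ together with the blocks of $D^2$ through $v^2$, so adding the two hypotheses gives $\bigcup_{B\ni v^*}X^B_{v^*}=X^1_{v^1}\cup X^2_{v^2}=X_{v^*}$.

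Uniqueness of the family $\{(X^B,H^B)\}$ is in fact forced by (a)--(c) alone. Because the arc sets of the blocks of $D$ partition $A(D)$, the matchings $M_a$ regroup into a partition of $A(H)=\bigcup_{a\in A(D)}M_a$ indexed by $\mathcal{B}(D)$; any family satisfying (b) must have $A(H^B)\subseteq\bigcup_{a\in A(B)}M_a$, and summing these containments over all blocks forces equality. Since every colour of a K-, C-, or BC-configuration is incident with an arc, $X^B_v$ is then forced to be the set of vertices of $X_v$ incident with $A(H^B)$ (and $X^B_v=\varnothing$ in the one-vertex K-configuration). I expect the only genuinely fiddly points to be this last forcing of the vertex sets and the handling of the small bricks (a single vertex, a digon), where ``block'' and ``cover'' degenerate to trivial objects; the merging-preserves-blocks step, though standard graph theory, is the part I would take most care to spell out.
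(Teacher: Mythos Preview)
Your proof is correct. The paper itself does not supply a proof of this proposition: it simply states that the result ``is straightforward and left to the reader.'' Your induction on the number of merging operations, together with the standard fact that merging at a vertex makes that vertex a cut-vertex and thus preserves the block decomposition, is exactly the natural way to fill in this omitted argument, and your separate uniqueness argument (forcing $A(H^B)=\bigcup_{a\in A(B)}M_a$ from the partition of $A(D)$ by blocks, then recovering $X^B_v$ from incidence with $A(H^B)$) is a clean way to handle that part without further induction.
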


Our aim is to prove that the class of constructible configurations and the class of minimal uncolorable degree-feasible configurations coincide. This leads to the following theorem.

\begin{theorem} \label{theorem:main-result}
Suppose that $(D,X,H)$ be a degree-feasible configuration. Then, $(D,X,H)$ is minimal uncolorable if and only if $(D,X,H)$ is constructible.
\end{theorem}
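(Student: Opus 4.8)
The proof splits into two directions, and the easy direction is showing that every constructible configuration is minimal uncolorable and degree-feasible. This I would do by induction on the number of merging operations. The base cases are the three basic configurations (K, C, and BC), each of which was already asserted (with the remark ``easy to check'') to be a minimal uncolorable degree-feasible configuration. For the inductive step, suppose $(D,X,H)$ is obtained by merging $(D^1,X^1,H^1)$ and $(D^2,X^2,H^2)$ at $v^1,v^2 \mapsto v^*$. Degree-feasibility is immediate because merging does not change the in- or out-degree of any vertex of $D^i$ other than $v^i$, while at $v^*$ the degrees add and so do the list sizes $|X_{v^*}| = |X^1_{v^1}| + |X^2_{v^2}|$. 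Uncolorability follows because any acyclic transversal $T$ of $(X,H)$ would, after picking $x^* \in T \cap X_{v^*}$, restrict to an acyclic transversal of whichever $(D^i,X^i,H^i)$ contains $x^*$ (using that $H = H^1 \cup H^2$ with the two parts disjoint except for sharing no vertices at all, so a directed cycle in $H[T]$ lives entirely in one $H^i$); this contradicts uncolorability of that configuration. Minimality: deleting an arc $a$ of $H$, say $a \in A(H^1)$, makes $(D^1,X^1,H^1-a)$ colorable, and one combines this coloring with a coloring of the arcless part plus an appropriate greedy extension — more cleanly, one uses that $(D,X,H-a)$ is again a merge of a colorable configuration with $(D^2,X^2,H^2-\text{nothing})$, and a short argument shows a merge of a colorable configuration with any configuration on fewer vertices (by induction on $|D^2|$, peeling blocks) is colorable. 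This bookkeeping is routine but must be done carefully.

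The hard direction is that every minimal uncolorable degree-feasible configuration $(D,X,H)$ is constructible, and here the plan is induction on $|V(D)| + |A(D)|$ (or on $|A(H)|$). The strategy mirrors the classical Brooks-type arguments and the Kim--Ozeki proof for graphs. First I would dispose of the case where $D$ has a separating vertex $v$: each ``block-side'' of $D$ at $v$ gives a smaller feasible configuration, and minimality/uncolorability should localize to show that $(D,X,H)$ is a merge at $v$ of minimal uncolorable degree-feasible configurations on the pieces, which are constructible by induction. So the core case is $D$ $2$-connected. Here the key structural facts to establish are: (i) degree-feasibility together with minimal uncolorability forces $D$ to be Eulerian and each vertex $v$ to satisfy $|X_v| = d_D^+(v) = d_D^-(v)$ exactly (if some $|X_v|$ were strictly larger, or if $d_D^+(v) \neq d_D^-(v)$, a counting/greedy argument produces a coloring or a deletable arc); (ii) in the auxiliary digraph $H$, minimality forces every vertex $x \in X_v$ to have $d_H^+(x) = d_D^+(v)$ and $d_H^-(x) = d_D^-(v)$ — i.e., the matchings $M_a$ are all perfect matchings between the relevant list-parts — because an ``unmatched'' color can be used to greedily extend a coloring of a smaller configuration.

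With $D$ $2$-connected and Eulerian and all the matchings perfect, the combinatorial heart of the argument is to show $D$ is a DP-brick with $(X,H)$ of the prescribed form. The main obstacle — and the step I expect to be genuinely delicate — is handling the interplay between the digon structure of $D$ and the matchings in $H$: one must show that if $D$ is not a directed cycle then $D$ is bidirected, and then analyze the bidirected $2$-connected case, where essentially one reduces to the undirected Kim--Ozeki theorem via the symmetrization idea from the proof of Theorem~\ref{thm:undirected}, being careful that minimal uncolorability is preserved under the symmetrization of covers. For the non-bidirected case one shows $D$ must be a single directed cycle (any chord or extra digon, via perfectness of all matchings and a careful ``rotation'' of a partial coloring around the cycle, yields a colorable configuration or a removable arc), giving a C-configuration. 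For the bidirected case, reducing to $G(D)$ gives a complete graph, an odd cycle, or — because we no longer have the $2$-connectedness restriction on $G(D)$ after symmetrization interacts with even cycles — also even bidirected cycles with one ``twisted'' matching pair, matching exactly the even BC-configuration; one then checks the K-, odd-BC-, and even-BC-forms are the only possibilities and that each is constructible (a single basic configuration, no merging needed). Assembling these cases, together with the separating-vertex reduction, completes the induction. The trickiest bookkeeping will be ensuring at each reduction step that the smaller configurations inherit both degree-feasibility and minimal uncolorability, since dropping arcs of $H$ to restore minimality must be reconciled with the degree-equality constraints from (ii).
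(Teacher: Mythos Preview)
Your overall architecture matches the paper's: the easy direction via the merging proposition, induction for the hard direction, the separating-vertex reduction (the paper's Case~1), and in the $2$-connected case the dichotomy ``has a digon $\Rightarrow$ $D$ bidirected $\Rightarrow$ reduce to the undirected theorem'' versus ``no digon $\Rightarrow$ $D$ is a directed cycle''. But there is a real gap in the bidirected branch. Symmetrizing the cover as in the proof of Theorem~\ref{thm:undirected} preserves \emph{un}colorability but gives you no control over \emph{minimal} uncolorability; if after symmetrizing you pass to a minimal uncolorable subcover you have severed the link to the original $H$ and can conclude nothing about $(D,X,H)$ itself. The paper sidesteps this entirely by proving that in a minimal uncolorable degree-feasible configuration $H$ is \emph{already} bidirected wherever $D$ is: Proposition~\ref{prop_oppositearcs->bidirected} shows that a digon between $u$ and $v$ in $D$ forces $H[X_u\cup X_v]$ to be bidirected. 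That proposition, and the companion ``forbidden configurations'' of Proposition~\ref{prop_forbidden_config}, are proved via a \emph{shifting operation}---given an acyclic transversal of $(X,H)/v$, one slides the missing colour along an arc of $H$ to obtain an acyclic transversal of $(X,H)/u$ for a neighbour $u$---and this operation, not symmetrization, is the engine of the entire $2$-connected analysis.

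Two further remarks. Your structural claim~(ii), that every $M_a$ is a perfect matching, is true for $2$-connected $D$ (it fails at separating vertices of merged configurations), but your one-line justification does not work: an $x\in X_v$ lacking an out-arc into $X_u$ can still sit on a directed cycle of $H[T\cup\{x\}]$ through its other neighbours, so it cannot simply be appended to a partial transversal. The paper neither proves nor uses~(ii); it relies only on the weaker Proposition~\ref{prop_main-prop}(c), which says that for each \emph{fixed} acyclic transversal $T$ of $(X,H)/v$ the arcs between $X_v$ and $T$ form perfect matchings. Finally, the ``no digon'' branch is considerably longer than your sketch suggests: the paper needs a chain of four claims (ruling out $K_4$ and induced $K_4^-$ in $G(D)$, showing every induced cycle of $G(D)$ lifts to a directed cycle in $D$, and then that every cycle of $G(D)$ is chordless), each established by iterated shifting. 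Your ``rotation of a partial coloring'' is the right intuition, but expect the bulk of the work to live there.
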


For DP-colorings of undirected graphs, an analogous result was proven by Bernshteyn, Kostochka and Pron in \cite{BeKoPro17}. However, they only characterized the graphs that are not DP-degree colorable, but not the corresponding bad covers. This was done later by Kim and Ozeki \cite{KiOz17}. The third author of this paper extended the characterization of the non DP-degree colorable graphs to hypergraphs  \cite{Schwes18} and characterized also the minimal uncolorable degree-feasible configurations; since he used the same terminology as we do and since we need to refer to the undirected version in our proof, we only state the part of his theorem examining simple undirected graphs.

Regarding undirected graphs, a \textbf{degree-feasible} configuration is a triple $(G,X,H)$, where $G$ is an undirected (simple) graph and $(X,H)$ is a cover of  $G$ such that $|X_v| \geq d_G(v)$ for all $v \in V(G)$. A degree-feasible configuration $(G,X,H)$ is \textbf{colorable} if $G$ is $(X,H)$-colorable, otherwise it is called \textbf{uncolorable}. Moreover, $(G,X,H)$ is \textbf{minimal uncolorable} if  $(G,X,H)$ is uncolorable but $(G,X,H-e)$ is colorable for each edge $e \in E(H)$. Furthermore,  for undirected graphs, the definition of a \textbf{K-configuration} and a \textbf{BC-configuration} can be deduced  from the above definition for digraphs by considering the underlying undirected graphs (see Figure~\ref{fig_undirected-config}). Finally, for undirected graphs we define the class of constructible configurations as the smallest class of configurations that contains each K-configuration and each BC-configuration and that is closed under the merging operation. The proof of the following theorem can be found in \cite{Schwes18}.

\begin{figure}[htp]
\resizebox{\linewidth}{!}{
\begin{tikzpicture}

\node[vertex, label={below:$1$}] (u1){};
\node[vertex, label={below:$2$}, xshift=2cm] (u2) at (u1) {};
\node[vertex, label={below:$3$},  xshift=2cm] (u3) at (u2) {};
\node[vertex,label={below:$4$},  xshift=2cm] (u4) at (u3) {};

\path[-]
		(u1)
		edge[help lines] (u2)
		edge[help lines, bend left] (u3)
		edge[help lines, bend left] (u4)	
		(u2)
		edge[help lines] (u3)
		edge[help lines, bend left] (u4)
		
		(u3)
		edge[help lines](u4);

\begin{scope}[bend angle=10]
\node[vertex, yshift=-1.8cm] (u11) at (u1) {};
\node[vertex, yshift=-.8cm] (u12) at (u11){};
\node[vertex, yshift=-.8cm] (u13) at (u12){};

\node[vertex, yshift=-1.8cm] (u21) at (u2) {};
\node[vertex, yshift=-.8cm] (u22) at (u21){};
\node[vertex, yshift=-.8cm] (u23) at (u22){};

\node[vertex, yshift=-1.8cm] (u31) at (u3) {};
\node[vertex, yshift=-.8cm] (u32) at (u31){};
\node[vertex, yshift=-.8cm] (u33) at (u32){};

\node[vertex, yshift=-1.8cm] (u41) at (u4) {};
\node[vertex, yshift=-.8cm] (u42) at (u41){};
\node[vertex, yshift=-.8cm] (u43) at (u42){};
\end{scope}

\path[-]
(u11) edge (u21)
	  edge[ bend left] (u31)
	  edge[ bend left] (u41)
(u12) edge (u22)
	  edge[ bend left] (u32)
	  edge[ bend left] (u42)

(u13) edge (u23)
	  edge[ bend left] (u33)
	  edge[ bend left] (u43)
	  
(u21) edge (u31)
	  edge[ bend left] (u41)

(u22) edge (u32)
	  edge[ bend left] (u42)
	  
(u23) edge (u33)
		   edge[bend left] (u43)	  

(u31) edge (u41)

(u32) edge (u42)
(u33) edge (u43);

\begin{pgfonlayer}{background}
\node[ellipse, draw=black, fit=(u11)(u12)(u13), label={below:$X_1$}]{};
\node[ellipse, draw=black, fit=(u21)(u22)(u23), label={below:$X_2$}]{};
\node[ellipse, draw=black, fit=(u31)(u32)(u33), label={below:$X_3$}]{};
\node[ellipse, draw=black, fit=(u41)(u42)(u43), label={below:$X_4$}]{};
\end{pgfonlayer}

\node (h1) at (u4) [xshift=1cm, yshift=1cm]{};
\node (h2) at (u43)[xshift=1cm, yshift=-1.3cm]{};

\draw[dashed] (h1) -- (h2);

\begin{scope}[xshift=8cm]
\node[vertex, label={below:$1$}] (u1){};
\node[vertex, label={below:$2$}, xshift=2cm] (u2) at (u1) {};
\node[vertex, label={below:$3$},  xshift=2cm] (u3) at (u2) {};
\node[vertex,label={below:$4$},  xshift=2cm] (u4) at (u3) {};
\node[vertex, label={below:$5$}, xshift=2cm] (u5) at (u4) {};

\path[-]
		(u1)
		edge[help lines] (u2)
		edge[help lines, bend left] (u5)

		(u2)
		edge[help lines] (u3)
		
		(u3)
		edge[help lines](u4)
		
		(u4)
		edge[help lines](u5);

\begin{scope}[bend angle=10]
\node[vertex, yshift=-2.2cm] (u11) at (u1) {};
\node[vertex, yshift=-.8cm] (u12) at (u11){};

\node[vertex, yshift=-2.2cm] (u21) at (u2) {};
\node[vertex, yshift=-.8cm] (u22) at (u21){};

\node[vertex, yshift=-2.2cm] (u31) at (u3) {};
\node[vertex, yshift=-.8cm] (u32) at (u31){};

\node[vertex, yshift=-2.2cm] (u41) at (u4) {};
\node[vertex, yshift=-.8cm] (u42) at (u41){};

\node[vertex, yshift=-2.2cm] (u51) at (u5) {};
\node[vertex, yshift=-.8cm] (u52) at (u51) {};
\end{scope}

\path[-]
(u11) edge[help lines] (u21)
	  edge[help lines, bend left] (u51)
(u12) edge[help lines] (u22)
	  edge[help lines, bend right](u52)
(u21) edge[help lines] (u31)
(u22) edge[help lines] (u32)
(u31) edge[help lines] (u41)
(u32) edge[help lines] (u42)
(u41) edge[help lines] (u51)
(u42) edge[help lines] (u52); 

\begin{pgfonlayer}{background}
\node[ellipse, draw=black, fit=(u11)(u12), label={below:$X_1$}]{};
\node[ellipse, draw=black, fit=(u21)(u22), label={below:$X_2$}]{};
\node[ellipse, draw=black, fit=(u31)(u32), label={below:$X_3$}]{};
\node[ellipse, draw=black, fit=(u41)(u42), label={below:$X_4$}]{};
\node[ellipse, draw=black, fit=(u51)(u52), label={below:$X_5$}]{};
\end{pgfonlayer} 

\end{scope}
\end{tikzpicture}}

\caption{A K-configuration and a BC-configuration for undirected graphs }
\label{fig_undirected-config}
\end{figure}
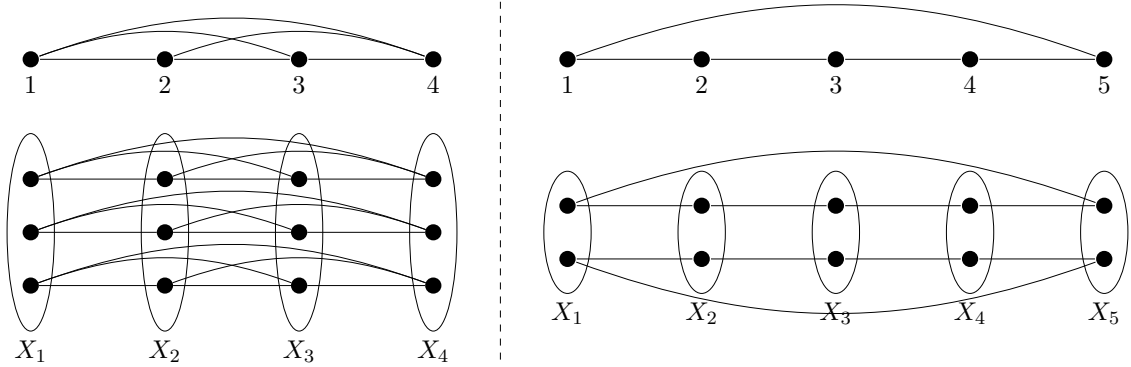

\begin{theorem}\label{theorem_undirected-case}
Let $G$ be a simple graph and let $(G,X,H)$ be a degree-feasible configuration. Then, $(G,X,H)$ is minimal uncolorable if and only if $(G,X,H)$ is constructible.
\end{theorem}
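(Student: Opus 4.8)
The plan is to prove the two implications separately, using throughout a single basic tool, a \emph{surplus lemma}: if $(G,X,H)$ is degree-feasible, $G$ is connected, and $|X_{v_0}|>d_G(v_0)$ for some $v_0$, then $(G,X,H)$ is colorable. One proves this by ordering the vertices as a reverse breadth-first search rooted at $v_0$ and coloring greedily, since every vertex other than $v_0$ has a later neighbour and hence at most $d_G(v)-1<|X_v|$ of its colors are ever forbidden, while $v_0$ survives because of its surplus. Two consequences are immediate for any minimal uncolorable degree-feasible configuration (which I may assume has $G$ connected): it is \emph{tight}, i.e.\ $|X_v|=d_G(v)$ for all $v$, and it is \emph{vertex-critical}, i.e.\ the configuration induced on $G-w$ is colorable for every $w$, because each component of $G-w$ inherits a surplus vertex from a neighbour of $w$. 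The forward direction (constructible $\Rightarrow$ minimal uncolorable) then follows by induction on the number of merges: the base K- and BC-configurations are minimal uncolorable by the remarks in the text, degree-feasibility is additive at the merge vertex $v^*$, uncolorability holds because any transversal picks a color $t\in X_{v^*}$ lying in one side only, so its restriction to that side is an (uncolorable) transversal there, and minimality follows from vertex-criticality, placing at $v^*$ the color $t$ used by a coloring of $(G^1,X^1,H^1-e)$ and extending it by a coloring of $G^2-v^2$.

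For the converse I induct on $|V(G)|$, assuming $G$ connected, tight and minimal uncolorable, and split on $2$-connectivity. The $2$-connected case is the heart. Here $G-v$ is connected for every $v$, which upgrades the surplus lemma into a \emph{full-matching lemma}: every color $x\in X_v$ satisfies $d_H(x)=d_G(v)$, so $x$ is matched along \emph{every} edge at $v$. Indeed, if $x$ missed an edge, then placing $x$ at $v$ would forbid fewer than $d_G(v)$ colors, leaving a neighbour of $v$ with surplus in the connected graph $G-v$, and the surplus lemma would color $G$. Hence every matching $M_{uv}$ is perfect on both sides, forcing $d_G(u)=d_G(v)$ along each edge and so making $G$ some $r$-regular graph. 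If $G$ were neither complete nor a cycle I would invoke the standard structural fact that a $2$-connected non-complete non-cycle graph contains an induced path $a\,b\,c$ with $G-\{a,c\}$ connected; since the matchings are perfect, any $\beta\in X_b$ has partners $\alpha\in X_a$ and $\gamma\in X_c$ both matched to it, so fixing $\alpha,\gamma$ and greedily coloring $G-\{a,c\}$ in reverse-search order ending at $b$ leaves $b$ with two neighbours forbidding the same color $\beta$, hence an available color and a coloring of $G$, a contradiction. Thus $G$ is complete or a cycle.

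It then remains to identify the cover in each case. On a cycle each $|X_v|=2$ and each perfect matching is either ``straight'' or a ``twist''; encoding the choice at each vertex by a bit, straight edges demand unequal bits and twists demand equal bits, so a coloring exists if and only if the number of straight edges is even. Uncolorability therefore means this number is odd, and relabelling a vertex (which toggles its two incident edges and preserves this parity) normalizes the cover to all-straight on an odd cycle or all-straight-but-one on an even cycle, which are precisely the odd and even BC-configurations. On a complete graph the perfect matchings between all pairs of color sets must align into $r$ vertex-disjoint transversal cliques; this Latin-square style alignment, forced by the absence of any independent transversal, is exactly a K-configuration.

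The cut-vertex case reduces to the preceding one by \emph{detaching a leaf block}. Let $B$ be an end block with cut vertex $v$; placing a color $x\in X_v$ and applying the surplus lemma to each (connected) component of $G-v$ shows that a component can fail to be colorable only if $x$ is matched along all of $v$'s edges into it, and combining this with edge-minimality (deleting an $H$-edge at $v$ reveals which side is responsible) shows that each color at $v$ is matched inside exactly one block and fully so. A Hall-type counting argument then partitions $X_v=\bigsqcup_{B'}X_v^{B'}$ with $|X_v^{B'}|=d_{B'}(v)$, exhibiting $(G,X,H)$ as a merge at $v$ of the tight, minimal uncolorable configurations on $B$ and on the rest of $G$, to which the induction hypothesis applies. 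I expect the main obstacle to be exactly the places where edge-minimality must be used in an essential, non-greedy way: the full-matching lemma guaranteeing the color collision at $b$ (the point where DP-coloring genuinely departs from classical Brooks), the alignment of the matchings into transversal cliques on complete graphs, and the clean partition of $X_v$ across blocks in the detachment step.
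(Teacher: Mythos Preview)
The paper does not give its own proof of this theorem; it is imported from \cite{Schwes18} (see the sentence preceding the statement) and used as a black box inside the proof of Theorem~\ref{theorem:main-result}. So there is no in-paper argument to compare your attempt against.

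As for the proposal itself: your overall architecture is the standard one and the pieces you work out in detail are correct. The surplus lemma, tightness, vertex-criticality, the full-matching lemma in the $2$-connected case, the $a\,b\,c$ induced-path trick reducing to $G$ complete or a cycle, and the parity analysis on cycles are all sound. You are also right that the remaining work is concentrated exactly where you flag it. Two comments on those spots. First, the ``Latin-square alignment'' on $K_n$ is not a one-liner: you must show that the relation ``matched by $M_{uv}$'' is transitive across \emph{all} pairs $u,v$, which is where edge-minimality of $H$ (not mere uncolorability) enters---deleting a single matching edge and using the resulting coloring is the lever. Second, in the cut-vertex step, the partition $X_v=\bigsqcup_{B'}X_v^{B'}$ with $|X_v^{B'}|=d_{B'}(v)$ is not literally a Hall argument; the cleanest route is the one the paper uses in the directed setting (Proof of Theorem~\ref{theorem:main-result}, Case~1): define $X_j$ as the colors of $X_v$ that appear in no partial coloring of side $j$, show $X_v=X_1\cup X_2$ by uncolorability, then use Proposition~\ref{prop_main-prop}(a) on each side to force $|X_j|=d_{D^j}(v)$ and hence disjointness. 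That template transfers verbatim to the undirected case and closes your detachment step.
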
 

In the following, given a feasible configuration $(D,X,H)$, we will often fix a vertex $v \in V(D)$ and regard the feasible configuration $(D',X',H')$, where $D'=D-v$, $X'$ is the restriction of $X$ to $V(D) \setminus \{v\}$ and $H'=H-X_v$. For the sake of readability, we will write $(X',H')=(X,H)/v$. 

First we state some important facts about minimal uncolorable degree-feasible configurations. Those will lead to powerful tools and operations that we use in order to characterize the minimal uncolorable degree-feasible configurations. Recall that the digraph $D$ of a degree-feasible configuration $(D,X,H)$ is connected by definition.

\begin{proposition}
\label{prop_main-prop}
Let $(D,X,H)$ be a degree-feasible configuration. If $(D,X,H)$ is uncolorable, then the following statements hold:
\begin{itemize}
\item[\upshape (a)] $|X_v|=d_D^+(v)=d_D^-(v)$ for all $v \in V(D)$. As a consequence, $D$ is Eulerian.
\item[\upshape (b)] Let $v \in V(D)$ and let $(X',H')=(X,H)/v$. Then, there is an acyclic transversal of $(X',H')$.
\item[\upshape (c)] Let $v \in V(D)$ and let $T$ be an acyclic transversal of $(X',H')=(X,H)/v$. Moreover, let $T^+=\bigcup_{u \in N_D^+(v)}(X_u \cap T)$ and let $T^-=\bigcup_{u \in N_D^-(v)}(X_u \cap T)$. Then, the arcs from $E_H(X_v,T^+)$ form a perfect matching in $H[X_v \cup T^+]$ and the arcs from $E_H(T^-,X_v)$ form a perfect matching in $H[X_v \cup T^-]$.
\end{itemize}
\end{proposition}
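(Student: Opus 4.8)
The plan is to prove the three statements essentially in the order they are listed, since (a) supports (b) and (b) supports (c). Throughout, recall (Proposition~\ref{prop_feas-config}(a)) that $d_H^+(x)\le d_D^+(v)$ and $d_H^-(x)\le d_D^-(v)$ for every $x\in X_v$, and that a degree-feasible configuration satisfies $|X_v|\ge\max\{d_D^+(v),d_D^-(v)\}$.

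\medskip
\noindent\textbf{Proof of (a).} I would argue by contradiction: suppose some vertex $w$ has $|X_w|>d_D^+(w)$ or $|X_w|>d_D^-(w)$; by possibly reversing all arcs (which preserves covers, acyclicity and degree-feasibility) assume $|X_w|>d_D^-(w)$. The idea is to peel off vertices greedily. Order $V(D)=\{v_1,\dots,v_n\}$ so that $v_n=w$ and so that for each $i<n$ the vertex $v_i$ has an out-neighbour in $\{v_{i+1},\dots,v_n\}$ in the underlying graph; such an ordering exists because $D$ is connected (take any spanning tree rooted at $w$ and order by decreasing distance, orienting along a walk to $w$ — more carefully, one uses that $D$ is connected so repeatedly some not-yet-removed vertex other than $w$ is adjacent to the rest). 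Now colour $v_1,v_2,\dots,v_{n-1}$ one at a time: when we reach $v_i$, at most $d_D^-(v_i)-1< |X_{v_i}|$ colours in $X_{v_i}$ are "forbidden" in the sense of creating, together with already-chosen colours, an in-arc that could close a monochromatic directed cycle — the gain of one comes from the fact that $v_i$ has a neighbour still to be coloured, so strictly fewer than $d_D^-(v_i)$ of its in-matchings are active. Hence a valid colour is available at each step, and finally at $w=v_n$ all of $X_w$ would need to be blocked, but $w$ has only $d_D^-(w)<|X_w|$ in-arcs, so a colour survives; this yields an acyclic transversal, contradicting uncolorability. Since then $|X_v|\le d_D^+(v)$ and $|X_v|\le d_D^-(v)$ for all $v$, combined with degree-feasibility we get $|X_v|=d_D^+(v)=d_D^-(v)$, so $D$ is Eulerian. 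The main obstacle here is making the greedy argument rigorous: one must be careful that "a monochromatic directed cycle uses an in-arc at $v_i$ whose other end is already coloured", and that deferring one neighbour really removes one constraint — this is the standard Eulerian/degeneracy argument behind $\chi_{\mathrm{DP}}(D)\le\max\{\Delta^+,\Delta^-\}+1$, adapted to give the strict inequality.

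\medskip
\noindent\textbf{Proof of (b).} Here $(X',H')=(X,H)/v$ is the feasible configuration on $D-v$ obtained by deleting $X_v$. If $D-v$ were disconnected it would be handled componentwise, but in any case the point is: if $(X',H')$ had \emph{no} acyclic transversal, then some component would already be an uncolorable degree-feasible configuration (each vertex $u\neq v$ keeps $|X_u|\ge\max\{d_{D-v}^+(u),d_{D-v}^-(u)\}$ since degrees only drop), so that component would be uncolorable \emph{but still degree-feasible}; applying part (a) to that component forces all its degree equalities, and in particular forces $d_{D-v}^\pm(u)=d_D^\pm(u)$ for every vertex $u$ in it, i.e.\ no arc between $v$ and that component — contradicting connectedness of $D$ unless $v$ is isolated, which is impossible for $n\ge 2$ (and for $n=1$ there is nothing to prove). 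Hence $(X',H')$ is colorable, i.e.\ has an acyclic transversal $T$.

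\medskip
\noindent\textbf{Proof of (c).} Fix $v$, an acyclic transversal $T$ of $(X,H)/v$, and set $T^+=\bigcup_{u\in N_D^+(v)}(X_u\cap T)$, $T^-=\bigcup_{u\in N_D^-(v)}(X_u\cap T)$, so $|T^+|=d_D^+(v)=|X_v|$ and $|T^-|=d_D^-(v)=|X_v|$ by (a). The arcs $E_H(X_v,T^+)$ form a subgraph of the union of matchings $M_{vu}$ over out-arcs $vu\in A(D)$, each $M_{vu}$ contributing at most one arc into $X_u\cap T$; so $E_H(X_v,T^+)$ is a matching in $H[X_v\cup T^+]$, and we must show it is perfect, i.e.\ saturates all of $X_v$ and all of $T^+$. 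Suppose not: then some $x\in X_v$ has no out-neighbour in $T^+$. Consider the transversal $T\cup\{x\}$ of $(X,H)$. A directed cycle in $H[T\cup\{x\}]$ must pass through $x$ (since $H[T]$ is acyclic), hence uses an out-arc of $x$ and an in-arc of $x$. But $x$ has no out-arc into $T^+$, and its out-arcs go only to $\bigcup_{u\in N_D^+(v)}X_u$ — so $x$ has no out-arc inside $T\cup\{x\}$ at all, hence lies on no directed cycle, so $T\cup\{x\}$ is an acyclic transversal of $(X,H)$, contradicting uncolorability. Thus every $x\in X_v$ is saturated; since $|X_v|=|T^+|$ and the arcs form a matching, $T^+$ is saturated too, giving a perfect matching in $H[X_v\cup T^+]$. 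The claim for $E_H(T^-,X_v)$ is symmetric (reverse all arcs, which swaps the roles of $T^+$ and $T^-$). The only subtlety is the observation that if $x$ has no out-arc within $T\cup\{x\}$ then no cycle can use it — immediate — together with keeping the "in-" and "out-" bookkeeping straight; no serious obstacle beyond that.
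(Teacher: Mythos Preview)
Your arguments for (b) and (c) are essentially the paper's. In (c) there is one logical slip: from ``each $M_{vu}$ contributes at most one arc into $X_u\cap T$'' you deduce only that every vertex of $T^+$ has at most one in-arc from $X_v$, \emph{not} that $E_H(X_v,T^+)$ is already a matching --- a single $x\in X_v$ could a priori send arcs to several vertices of $T^+$. The right assembly (which is what the paper does) is a double count: your cycle argument shows every $x\in X_v$ has at least one out-arc into $T^+$; the observation above shows every $t\in T^+$ has at most one in-arc from $X_v$; and $|X_v|=|T^+|$ by (a). Hence the number of arcs is both $\ge|X_v|$ and $\le|T^+|$, forcing exactly one arc at every vertex on both sides, i.e.\ a perfect matching.

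Part (a), however, has a genuine gap. Your ordering guarantees only that each $v_i$ with $i<n$ has a \emph{neighbour} (in the underlying graph) among $\{v_{i+1},\dots,v_n\}$, not an \emph{in-neighbour} in $D$. If the deferred neighbour happens to be only an out-neighbour of $v_i$, then all $d_D^-(v_i)$ in-neighbours of $v_i$ may already be coloured, so as many as $d_D^-(v_i)=|X_{v_i}|$ colours in $X_{v_i}$ can be blocked by in-arcs and the greedy step can fail. The fix is to avoid out-arcs instead of in-arcs at such a step: since $v_i$ then has an uncoloured out-neighbour, at most $d_D^+(v_i)-1<|X_{v_i}|$ colours are blocked, and a vertex with out-degree~$0$ in the current partial transversal also lies on no directed cycle. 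The paper avoids this case analysis entirely by arguing inductively on $|D|$: if $|X_w|>d_D^-(w)$, then $(D-w,X',H')$ is uncolorable (otherwise any acyclic transversal extends to $w$), so some component $D''$ of $D-w$ gives a smaller uncolorable degree-feasible configuration; by induction $|X_u|=d_{D''}^+(u)$ for all $u\in V(D'')$, which for any $u$ adjacent to $w$ contradicts $|X_u|\ge d_D^+(u)>d_{D''}^+(u)$.
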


\begin{proof}
(a) The proof is by induction on the order of $D$. The statement is clear if $|D|=1$ as in this case $X_v=\varnothing$ for the only vertex $v$ of $D$. Now assume that $|D| \geq 2$. By assumption, $|X_v| \geq \max\{d_D^+(v),d_D^-(v)\}$ for all $v \in V(D)$. Hence, it suffices to show  $|X_v| \leq \min\{d_D^+(v),d_D^-(v)\}$ for all $v \in V(D)$. Suppose, to the contrary, that there is a vertex $v \in V(D)$ with $|X_v| > \min\{d_D^+(v), d_D^-(v)\}$, say $|X_v| > d_D^-(v)$ (by symmetry). Let $D'=D-v$ and let $(X',H')=(X,H)/v$. We claim that $D'$ is not $(X',H')$-colorable. Otherwise, there would be an acyclic transversal $T$ of $(X',H')$. As $|X_v| > d_D^-(v)$ it follows from (C2) that there is a vertex $x \in X_v$ such that $x'x \not \in A(H)$ for all $x' \in T'$. Consequently, $T \cup \{x\}$ is an acyclic transversal of $(X,H)$ as $x$ has no in-neighbor in $H[T \cup \{x\}]$, that is, $(D,X,H)$ is colorable, a contradiction. Thus, $D'$ is not $(X',H')$-colorable, as claimed. Hence, $D'$ contains a connected component $D''$ such that $(D'',X'',H'')$ is uncolorable,  where $X''$ is the restriction of $X'$ to $V(D'')$ and $H''=H'[\bigcup_{v \in V(D'')}X_v]$. By applying the induction hypothesis to $(D'',X'',H'')$ we conclude that $|X_w|=d_{D''}^+(w)=d_{D''}^-(w)$ for all $w \in D''$. As $D$ is connected, there is a vertex $w \in D''$ that is adjacent to $v$ in $D$. By symmetry, we may assume $wv \in A(D)$. But then, $$d_{D''}^+(w)=|X_w| \geq \max\{d_D^+(w), d_D^-(w)\} \geq d_{D''}^+(w) + 1,$$ which is impossible. This proves (a).

(b) For this proof, let $D'=D-v$ and let $(X',H')=(X,H)/v$. Let $D''$ be an arbitrary component of $D'$, let $X''$ be the restriction of $X'$ to $V(D'')$, and let $H''=H[\bigcup_{u \in V(D'')}X_u]$. Then, $(D'', X'',H'')$ is a degree-feasible configuration. As $D$ is connected, there is at least one vertex $u \in V(D'')$ that is in $D$ adjacent to $v$, say $uv \in A(D)$. By (a), this implies $|X_u| = d_D^+(u) > d_{D''}^+(u)$. Again by (a), we conclude that $(D'',X'',H'')$ is colorable, \emph{i.e.}, $(X'',H'')$ admits an acyclic transversal $T_{D''}$. Let $T$ be the union of the sets $T_{D''}$ over all components $D''$ of $D - v$. Then, $T$ is an acyclic transversal of $(X',H')$.
%

(c) For the proof, we first assume that there is a vertex $x \in X_v$ such that no vertex of $T$ is an out-neighbor of $x$ in $H$. Then, similarly to the proof of (a), we conclude that $T \cup \{x\}$ is an acyclic transversal of $(X,H)$, a contradiction. Hence, each vertex $x \in X_v$ has in $H$ at least one out-neighbor belonging to $T$. Moreover, for each vertex $u \in N_D^+(v)$ and for the unique vertex $x' \in T \cap X_{u}$ there may be at most one vertex $x \in X_v$ with $xx' \in A(H)$ (by (C2)). As $|X_v|=d_{D}^+(v)=|N_D^+(v)|$, this implies that for each vertex $x \in X_v$ there is exactly one vertex $x' \in T$ with $xx' \in A(H)$. Thus, the arcs from $X_v$ to $T^+=\bigcup_{u \in N_D^+(v)}(X_u \cap T)$ are a perfect matching in $H[X_v \cup T^+]$ as claimed. Using a similar argument, it follows that $E_H(T^-,X_v)$ is a perfect matching in $H[X_v \cup T^-]$.
\end{proof}

The above proposition is our main tool in order to characterize the minimal uncolorable degree-feasible configurations. The next proposition shows the usefulness of the merging operation.

\begin{proposition}\label{prop_merging}
Let $(D^1,X^1,H^1)$ and $(D^2,X^2,H^2)$ be two disjoint feasible configurations, and let $(D,X,H)$ be the configuration that is obtained from $(D^1,X^1,H^1)$ and $(D^2,X^2,H^2)$ by merging two vertices $v^1 \in V(D^1)$ and $v^2 \in V(D^2)$ to a new vertex $v^*$. Then, $(D,X,H)$ is a feasible configuration and the following statements are equivalent:
\begin{itemize}
\item[\upshape (a)] Both $(D^1,X^1,H^1)$ and $(D^2,X^2,H^2)$ are minimal uncolorable degree-feasible configurations.
\item[\upshape  (b)] $(D,X,H)$ is a minimal uncolorable degree-feasible configuration.
\end{itemize}
\end{proposition}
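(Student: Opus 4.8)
The plan is to prove the two implications separately, in each case exploiting the facts collected in Proposition~\ref{prop_main-prop} about the degree-feasibility and the block structure. Throughout, write $D^i$ for the digraphs, $v^i \in V(D^i)$ for the merged vertices, and $v^*$ for the identified vertex in $D$; note that every vertex of $D$ other than $v^*$ inherits its in- and out-degree unchanged from its $D^i$, while $d_D^+(v^*) = d_{D^1}^+(v^1) + d_{D^2}^+(v^2)$ and similarly for the in-degree. Likewise $X_{v^*} = X^1_{v^1} \cup X^2_{v^2}$ is a disjoint union, so $|X_{v^*}| = |X^1_{v^1}| + |X^2_{v^2}|$, and the matchings of the cover $(X,H)$ are exactly the matchings of $(X^1,H^1)$ and $(X^2,H^2)$, which never touch both $X^1$-colors and $X^2$-colors simultaneously. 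In particular the degree-feasibility of $(D,X,H)$ is equivalent to the degree-feasibility of both $(D^i,X^i,H^i)$, and any directed cycle of $H$ lies entirely inside $H^1$ or entirely inside $H^2$ (since $H = H^1 \cup H^2$ is a disjoint union of digraphs).

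For the direction (a)$\Rightarrow$(b): assume both $(D^i,X^i,H^i)$ are minimal uncolorable degree-feasible. Degree-feasibility of $(D,X,H)$ is immediate from the degree bookkeeping above. For uncolorability, suppose $T$ were an acyclic transversal of $(X,H)$; restricting $T$ to $V(H^1)$, i.e.\ taking $T^1 = T \cap V(H^1)$, gives a transversal of $(X^1,H^1)$ (it picks exactly one color in each $X^1_v$, including $X^1_{v^1} \subseteq X_{v^*}$), and $H^1[T^1]$ is a subdigraph of $H[T]$, hence acyclic --- contradicting uncolorability of $(D^1,X^1,H^1)$. So $(D,X,H)$ is uncolorable. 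For minimality, let $a \in A(H)$; by symmetry say $a \in A(H^1)$. By minimality of $(D^1,X^1,H^1)$ there is an acyclic transversal $T^1$ of $(X^1,H^1-a)$, and by Proposition~\ref{prop_main-prop}(b) applied to $(D^2,X^2,H^2)$ with the vertex $v^2$ --- or more directly, by minimal uncolorability of $(D^2,X^2,H^2)$ together with Proposition~\ref{prop_feas-config}(b) --- we can choose an acyclic transversal $T^2$ of $(X^2, H^2 - a')$ for some arc $a'$, and actually we only need: $(D^2,X^2,H^2)$ uncolorable but $(D^2,X^2,H^2-b)$ colorable; pick any $b \in A(H^2)$ (nonempty since $|D^2|\ge 2$, else $D^2$ is a single vertex with $X^2 = \varnothing$, a degenerate case we handle separately), giving an acyclic transversal $T^2$ of $(X^2,H^2-b) \supseteq (X^2,H^2-a)$ as a spanning subdigraph. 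The subtlety is that $T^1$ and $T^2$ must agree on $X_{v^*}$: here we use that $T^1$ restricted to $X^1_{v^1}$ is a single color and $T^2$ restricted to $X^2_{v^2}$ is a single color, and these colors lie in disjoint sets, so $T^1 \cup T^2$ automatically selects exactly one color from each $X^1_v$ and each $X^2_w$, hence exactly one from $X_{v^*} = X^1_{v^1}\cup X^2_{v^2}$ --- wait, that would give two colors in $X_{v^*}$. This is the main obstacle and it is resolved as follows: after deleting $a \in A(H^1)$, use minimality of $(D^1,\ldots)$ to get an acyclic transversal of $(X^1,H^1-a)$, then look at which color $x^* \in X^1_{v^1}$ it uses; separately, $(D^2,X^2,H^2)$ is uncolorable, but restricting attention to covers of $D^2$ where $v^2$ is pre-colored, Proposition~\ref{prop_main-prop}(c) (or rather (b), applied to the configuration $(X^2,H^2)/v^2$) yields an acyclic transversal of $(X^2,H^2) / v^2$; since $(D^2 - v^2)$ need not be connected, one assembles it componentwise exactly as in the proof of \ref{prop_main-prop}(b). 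Combining $x^*$ with that transversal of $(X^2,H^2)/v^2$ and with the acyclic transversal of $(X^1,H^1-a)$ gives a transversal of $(X,H-a)$; any directed cycle in it would live in $H^1 - a$ or in $H^2$, the former excluded by construction and the latter excluded because the $H^2$-part is an acyclic transversal of the configuration with $v^2$ removed, but a cycle through $X_{v^*}$ inside $H^2$ would need to pass through $x^* \in X^1$ --- impossible. Thus $(D,X,H-a)$ is colorable.

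For the direction (b)$\Rightarrow$(a): assume $(D,X,H)$ is minimal uncolorable degree-feasible. Degree-feasibility of each $(D^i,X^i,H^i)$ is again just the degree bookkeeping. For uncolorability of, say, $(D^1,X^1,H^1)$: if it had an acyclic transversal $T^1$, then using the color $x^* \in X^1_{v^1}$ chosen by $T^1$ as a pre-coloring of $v^2$ in $D^2$, and invoking that $(X^2,H^2)/v^2$ has an acyclic transversal (this follows from Proposition~\ref{prop_main-prop}(b) once we know $(D^2,X^2,H^2)$ is uncolorable --- but we do not know that yet; instead argue directly: $D^2 - v^2$ with cover $(X^2,H^2)/v^2$ is degree-feasible after $v^2$ is deleted only where a neighbor of $v^2$ loses degree, so each component is colorable by \ref{prop_main-prop}(a)-style reasoning). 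Assembling $T^1$ with such a transversal contradicts uncolorability of $(D,X,H)$. Hence both $(D^i,X^i,H^i)$ are uncolorable. For minimality: given $a \in A(H^1)$, by minimality of $(D,X,H)$ there is an acyclic transversal $T$ of $(X,H-a)$; its restriction $T \cap V(H^1)$ is an acyclic transversal of $(X^1,H^1-a)$, so $(D^1,X^1,H^1-a)$ is colorable, as required, and symmetrically for $H^2$.

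**Expected main obstacle.** The genuine difficulty, and the reason the merging operation is delicate, is the bookkeeping at the shared vertex $v^*$: one must be careful that a transversal of $(X,H)$ restricted to one side really is a transversal of that side's cover (it is, because $X^i_{v^i} \subseteq X_{v^*}$ and the transversal picks exactly one element of $X_{v^*}$, which lies in exactly one of $X^1_{v^1}, X^2_{v^2}$ --- so on the other side the ``transversal'' is missing a color at $v^i$), and conversely that two one-sided acyclic transversals can be glued only after one fixes the color at $v^*$ from one side and extends the other side via the deletion operation $(X,H)/v$ and Proposition~\ref{prop_main-prop}(b). Handling the degenerate cases $|D^1| = 1$ or $|D^2| = 1$ (where one configuration is the trivial $(K_1, v \mapsto \varnothing, \varnothing)$, which is vacuously minimal uncolorable) separately keeps the argument clean.
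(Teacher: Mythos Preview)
Your proposal contains one genuine gap and a few smaller sloppinesses.

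The genuine gap is your claim that ``the degree-feasibility of $(D,X,H)$ is equivalent to the degree-feasibility of both $(D^i,X^i,H^i)$'', which you then invoke in (b)$\Rightarrow$(a) by saying degree-feasibility of each $(D^i,X^i,H^i)$ is ``just the degree bookkeeping''. This is false. From $|X^1_{v^1}|+|X^2_{v^2}|=|X_{v^*}|\ge \max\{d^+_D(v^*),d^-_D(v^*)\}$ you cannot conclude $|X^i_{v^i}|\ge \max\{d^+_{D^i}(v^i),d^-_{D^i}(v^i)\}$ for each $i$ separately; the total could be split unevenly. The paper handles this carefully: it first uses Proposition~\ref{prop_main-prop}(a) on the (uncolorable, degree-feasible) $(D,X,H)$ to get $|X_v|=d^+_D(v)=d^-_D(v)$ for all $v$, deduces that each $D^j$ is Eulerian, and then argues that if $|X^j_{v^j}|<d^+_{D^j}(v^j)$ for some $j$ then $|X^{3-j}_{v^{3-j}}|>d^+_{D^{3-j}}(v^{3-j})$, making $(D^{3-j},X^{3-j},H^{3-j})$ degree-feasible with a strict surplus at $v^{3-j}$, hence colorable by Proposition~\ref{prop_main-prop}(a) --- contradicting the uncolorability you established earlier. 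You need this argument; it is not bookkeeping.

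The smaller issues are ones you eventually correct yourself. In the uncolorability step of (a)$\Rightarrow$(b) you first assert that $T\cap V(H^1)$ is a transversal of $(X^1,H^1)$, which is wrong in general (the single color $T$ picks from $X_{v^*}$ might lie in $X^2_{v^2}$); the fix, which both you and the paper use, is to observe that it lies in one of the two, say $X^1_{v^1}$, and restrict to that side. For minimality in (a)$\Rightarrow$(b) your write-up wanders through a wrong attempt before arriving at the correct idea: take $T^1$ an acyclic transversal of $(X^1,H^1-a)$ and glue it to an acyclic transversal of $(X^2,H^2)/v^2$ obtained via Proposition~\ref{prop_main-prop}(b). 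For the uncolorability step of (b)$\Rightarrow$(a), the paper obtains the needed acyclic transversal of $(X^2,H^2)/v^2$ more cleanly than your component-by-component sketch: it applies Proposition~\ref{prop_main-prop}(b) directly to the big configuration $(D,X,H)$ at $v^*$ and restricts. Your route can be made to work, but note that it too relies on degree-feasibility of $(D^2-v^2,\ldots)$, which you should check explicitly rather than wave at.
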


\begin{proof}
First we show that (a) implies (b). Clearly, $(D,X,H)$ is degree-feasible. Assume that $(D,X,H)$ is colorable. Then, there is an acyclic transversal $T$ of $(X,H)$.
As $X_{v^*}=X_{v^1} \cup X_{v^2}$, this implies that at least one of $v_1$ and $v_2$ (by symmetry, we can assume it is $v_1$) observes $|T \cap X_{v^1}|=1$. Thus, $T^1=T \cap V(H^1)$ is an acyclic transversal of $(X^1,H^1)$ and so $(D^1,X^1,H^1)$ is colorable, a contradiction to (a). This proves that $(D,X,H)$ is uncolorable. Now let $a \in A(H)$ be an arbitrary arc. By symmetry, we may assume $a \in A(H^1)$. Since $(D^1,X^1,H^1)$ is minimal uncolorable, there is an acyclic transversal $T^1$ of $(X^1,H^1-a)$. Since $(D^2,X^2,H^2)$ is also uncolorable and degree-feasible, there is an acyclic transversal $T^2$ of  $(X^2,H^2)/v^2$ (by Proposition~\ref{prop_main-prop}(b)). However, as $H=H^1 \cup H^2$ and $H_1 \cap H_2 = \varnothing$, the set $T=T^1 \cup T^2$ is an acyclic transversal of $(X,H-a)$ and so $(D,X,H-a)$ is colorable. Thus, (b) holds.

To prove that (b) implies (a), we first show that $(D^1,X^1,H^1)$ is a minimal uncolorable. Assume that $(D^1,X^1,H^1)$ is colorable, that is, $(X^1,H^1)$ has an acyclic transversal $T^1$. Since $(D,X,H)$ is an uncolorable degree-feasible configuration and as $H^2-v^2$ is a proper subdigraph of $H-v^*$, there is an acyclic transversal $T^2$ of $(X^2,H^2)/v^2$ (by Proposition~\ref{prop_main-prop}(b)). Then again, $T=T^1 \cup T^2$ is an acyclic transversal of $(X,H)$, contradicting (b). Thus, $(D^1,X^1,H^1)$ is uncolorable. Now let $a \in A(H^1)$ be an arbitrary arc. Then, as $(D,X,H)$ is minimal uncolorable and $a \in A(H)$, there is an acyclic transversal $T$ of $(X,H-a)$ and $T^1=T \cap V(H^1)$ clearly is an acyclic transversal of $(X^1,H^1-a)$. Consequently, $(D^1,X^1,H^1-a)$ is colorable. This shows that $(D^1,X^1,H^1)$ is minimal uncolorable. By symmetry $(D^2,X^2,H^2)$ is minimal uncolorable, too.

It remains to show that $(D^j,X^j,H^j)$ is degree-feasible for $j \in \{1,2\}$. As $(D,X,H)$ is an uncolorable degree-feasible configuration, Proposition~\ref{prop_main-prop}(a) implies that  
\begin{align}\label{eq_degree-prop10}
|X_v|=d_D^+(v)=d_D^-(v) \text{ for all  } v \in V(D).
\end{align} Consequently, each vertex from $D^j-v^j$ is eulerian in $D^j$. Since $$\sum_{u \in V(D^j)} d_{D^j}^+(u)=\sum_{u \in V(D^j)}d_{D^j}^-(u)=|A(D^j)|$$ is the number of arcs of $D^j$, it follows that $d_{D^j}^+(v^j)=d_{D^j}^-(v^j)$, and so $D^j$ is Eulerian for $j\in \{1,2\}$. Moreover, it follows from ~\eqref{eq_degree-prop10} that $|X_v|=d_D^+(v)=d_{D^j}^+(v)=d_{D^j}^-(v)$ for all $v \in V(D^j) \setminus \{v^j\}$ and $j \in \{1,2\}$.  If $|X_{v^j}|<d_D^+(v^j)$ for some $j \in \{1,2\}$, then $|X_{v^{3-j}}|>d_D^+(v^{3-j})$ and so $(D^{3-j},X^{3-j},H^{3-j})$ would be colorable by Proposition~\ref{prop_main-prop}(a), a contradiction. Hence, $(D^j,X^j,H^j)$ is degree-feasible for $j \in \{1,2\}$.
\end{proof}


In order to prove Theorem~\ref{theorem:main-result}, we need some more tools. The first one, which will be frequently used in the following, is the so-called \textbf{shifting operation}. Let $(D,X,H)$ be a minimal uncolorable degree-feasible configuration, let $D'=D-v$ for some $v \in V(D)$, and let $T$ be an acyclic transversal of $(X',H')=(X,H)/v$ (which exists by Proposition~\ref{prop_main-prop}(b)). Then it follows from Proposition~\ref{prop_main-prop}(c) that for each vertex $x \in X_v$ there is exactly one vertex $x' \in T$ with $xx' \in A(H)$ and exactly one vertex $x'' \in T$ with $x''x \in A(H)$. Let $v'$ and $v''$ be the vertices from $V(D)$ such that $x' \in X_{v'}$ and $x'' \in X_{v''}$. Then, $T'=T \setminus \{x'\} \cup \{x\}$ and $T''=T \setminus \{x''\} \cup \{x\}$ are acyclic transversals of $(X,H)/v'$ and $(X,H)/v''$, respectively, since in $H[T']$ (respectively $H[T'']$) the vertex $x$ has no out-neighbor (respectively no in-neighbor) and, hence, $x$ cannot be contained in a directed cycle. We say that $T'$ (respectively $T''$) evolves from $T$ by \textbf{shifting} the color $x'$ (respectively $x''$) to $x$. Of course, the shifting operation may be applied repeatedly. The next proposition can be easily deduced from Proposition~\ref{prop_main-prop} by applying the shifting operation. The statements of the proposition are illustrated in Figure~\ref{fig_forbidden-config}.

\begin{proposition} \label{prop_forbidden_config}
Let $(D,X,H)$ be a minimal uncolorable degree-feasible configuration, let $v \in V(D)$, and let $T$ be an acyclic transversal of $(X',H')=(X,H)/v$. Then, the following statements hold:
\begin{itemize}
\item[\upshape (a)] For every vertex $x \in X_v$ we have $|N_H^+(x) \cap T|=1$ and $|N_H^-(x) \cap T|=1$. 
\item[\upshape (b)] Let $u \in N_D^+(v)$ and let $X_u \cap T = \{x_u\}$. Then, there is a vertex $x \in X_v$ such that $xx_u \in A(H)$ and $N_H^-(x_u) \cap T=\varnothing$.
\item[\upshape (c)] Let $w \in N_D^-(v)$ and let $X_w \cap T = \{x_w\}$. Then, there is a vertex $x \in X_v$ such that $x_wx \in A(H)$ and $N_H^+(x_w) \cap T=\varnothing$.
\end{itemize}
\end{proposition}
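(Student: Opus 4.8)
The plan is to derive all three statements directly from Proposition~\ref{prop_main-prop}, using the shifting operation as the engine. Statement (a) is essentially a restatement of Proposition~\ref{prop_main-prop}(c): since $(D,X,H)$ is uncolorable and degree-feasible, part (c) of that proposition tells us that $E_H(X_v,T^+)$ is a perfect matching in $H[X_v\cup T^+]$ and $E_H(T^-,X_v)$ is a perfect matching in $H[X_v\cup T^-]$; by (C2) no two arcs of $H$ leaving $X_v$ can share their head, so ``perfect matching'' on the $X_v$-side means precisely that each $x\in X_v$ has exactly one out-neighbor in $T^+\subseteq T$ and (symmetrically) exactly one in-neighbor in $T^-\subseteq T$. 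Since any arc of $H$ incident with $X_v$ lies in some $M_a$ with $a$ incident with $v$, out-neighbors of $x$ in $T$ lie in $T^+$ and in-neighbors in $T^-$, so $|N_H^+(x)\cap T|=|N_H^-(x)\cap T|=1$. That is (a).

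For (b), fix $u\in N_D^+(v)$ with $X_u\cap T=\{x_u\}$. Because $(D,X,H)$ is \emph{minimal} uncolorable, Proposition~\ref{prop_main-prop}(c) (applied as above) already guarantees that $E_H(X_v,T^+)$ is a perfect matching on the $X_v$-side, but I actually need a vertex of $X_v$ matched \emph{to} $x_u$ whose shifted transversal stays acyclic. The argument is: since $E_H(X_v,T^+)$ saturates $T^+$ as well — indeed $|X_v|=d_D^+(v)=|N_D^+(v)|=|T^+|$ by Proposition~\ref{prop_main-prop}(a), so the perfect matching covers $T^+$ too — there is a (unique) $x\in X_v$ with $xx_u\in A(H)$. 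Now perform the shift: $T'=(T\setminus\{x_u\})\cup\{x\}$ is, as explained in the paragraph preceding the proposition, an acyclic transversal of $(X,H)/u$. Apply part (a) of the present proposition to the vertex $u$ and the transversal $T'$: every vertex of $X_u$, in particular $x_u$, has exactly one in-neighbor in $T'$. But $x$ is the unique out-neighbor of... rather, I should track in-neighbors of $x_u$: in $H[T\cup\{x\}]$ the vertex $x$ has $x_u$ as out-neighbor, and since the shift removed $x_u$ from the transversal and $x_u\notin T'$, the ``one in-neighbor of $x_u$ in $T'$'' from (a) would have to come from $N_H^-(x_u)\cap T$ together with possibly $x$ itself; since $x\in T'$ and $xx_u\in A(H)$, that in-neighbor is $x$, forcing $N_H^-(x_u)\cap T=\varnothing$ (any such old in-neighbor still lies in $T'$ and would give a second in-neighbor, contradicting (a) applied to $T'$). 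This is exactly (b); statement (c) is the mirror image, swapping out-neighbors for in-neighbors and $N_D^+$ for $N_D^-$.

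The main obstacle — really the only subtle point — is making the counting in (b)/(c) airtight: one must check that after the shift $x$ is genuinely a \emph{new} in-neighbor (i.e.\ $x\notin T$, which holds because $x\in X_v$ and $T\cap X_v=\varnothing$ since $T$ is a transversal of $(X,H)/v$), that $x_u$ genuinely leaves the transversal, and that $T'$ is still a transversal of the right punctured configuration so that part (a) applies to it. All of this is routine once one is careful about which configuration $(X,H)/u$ versus $(X,H)/v$ one is working in. I would present (b) in full detail and dispatch (c) with ``by the analogous argument, reversing all arcs.''
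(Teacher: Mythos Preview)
Your proof is correct and follows essentially the same route as the paper: part (a) is read off from Proposition~\ref{prop_main-prop}(c), and for (b) you locate the unique $x\in X_v$ with $xx_u\in A(H)$, shift $x_u\to x$ to obtain an acyclic transversal $T'$ of $(X,H)/u$, and then invoke (a) at $u$ to see that any in-neighbor of $x_u$ in $T$ would survive into $T'$ and give $x_u$ two in-neighbors there; (c) is symmetric. The paper's write-up is somewhat terser (it simply says ``assume $x'\in N_H^-(x_u)\cap T$, shift, get $|N_H^-(x_u)\cap T'|\ge 2$, contradiction to (a)''), but the argument is the same.
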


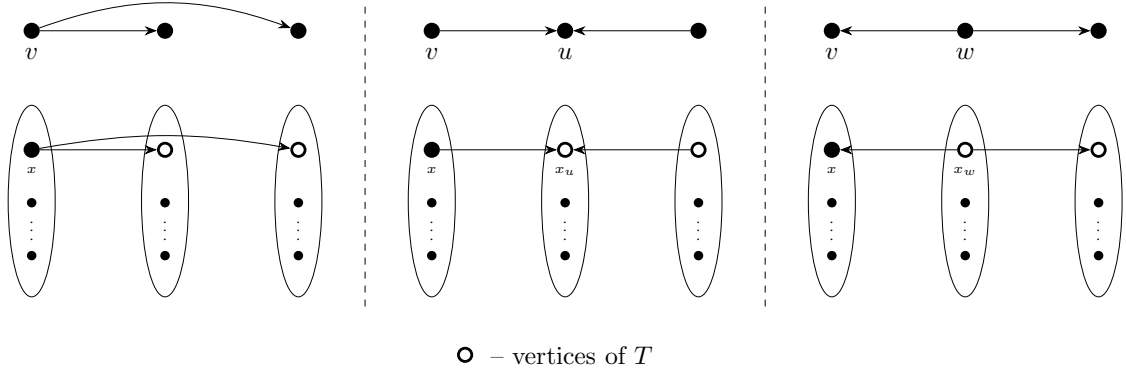
\begin{figure}[htp]
\centering
\resizebox{\linewidth}{!}{
\begin{tikzpicture}[>={[scale=1.1]Stealth},
tinyvertex/.style={circle,minimum size=1mm,very thick, draw=black, fill=black, inner sep=0mm}]

\node[vertex, label={below:$v$}] (u1){};
\node[vertex, xshift=2cm] (u2) at (u1) {};
\node[vertex, xshift=2cm] (u3) at (u2) {};

\path[-]
		(u1)
		edge [->] (u2)
		edge [->,bend left] (u3);

\begin{scope}[bend angle=10]
\node[vertex, label={[font=\tiny] below:$x$}, yshift=-1.8cm] (u11) at (u1) {};
\node[tinyvertex, yshift=-.8cm] (u12) at (u11){};
\node[yshift=-.4cm,  font=\tiny,rotate=90] at (u12) {$\cdots$};
\node[tinyvertex, yshift=-.8cm] (u13) at (u12){};

\node[vertex, fill=white, yshift=-1.8cm] (u21) at (u2) {};
\node[tinyvertex, yshift=-.8cm] (u23) at (u21){};
\node[yshift=-.4cm,  font=\tiny,rotate=90] at (u23) {$\cdots$};
\node[tinyvertex, yshift=-.8cm] (u24) at (u23){};

\node[vertex, fill=white, yshift=-1.8cm] (u31) at (u3) {};
\node[tinyvertex, yshift=-.8cm] (u33) at (u31){};
\node[yshift=-.4cm,  font=\tiny,rotate=90] at (u33) {$\cdots$};
\node[tinyvertex, yshift=-.8cm] (u34) at (u33){};

\path[-]
(u11) edge[->, bend left] (u31)
	  edge[->] (u21);

\begin{pgfonlayer}{background}
\node[ellipse, draw=black, fit=(u21)(u24), xshift=-2cm]{};
\node[ellipse, draw=black, fit=(u21)(u24)]{};
\node[ellipse, draw=black, fit=(u31)(u34)]{};
\end{pgfonlayer}
\end{scope}

\node (h1) at (u3) [xshift=1cm, yshift=.5cm] {};
\node (h2) at (u34)[xshift=1cm, yshift=-1cm]{};
\draw[dashed] (h1)--(h2);


\begin{scope}[xshift=6cm]

\node[vertex, label={below:$v$}] (u1){};
\node[vertex,label={below:$u$}, xshift=2cm] (u2) at (u1) {};
\node[vertex, xshift=2cm] (u3) at (u2) {};

\path[-]
		(u1)
		edge [->] (u2)
		(u2) edge [<-] (u3);

\begin{scope}[bend angle=10]
\node[vertex, label={[font=\tiny] below:$x$}, yshift=-1.8cm] (u11) at (u1) {};
\node[tinyvertex, yshift=-.8cm] (u12) at (u11){};
\node[yshift=-.4cm,  font=\tiny,rotate=90] at (u12) {$\cdots$};
\node[tinyvertex, yshift=-.8cm] (u13) at (u12){};

\node[vertex, fill=white, label={[font=\tiny] below:$x_u$}, yshift=-1.8cm] (u21) at (u2) {};
\node[tinyvertex, yshift=-.8cm] (u23) at (u21){};
\node[yshift=-.4cm,  font=\tiny,rotate=90] at (u23) {$\cdots$};
\node[tinyvertex, yshift=-.8cm] (u24) at (u23){};

\node[vertex, fill=white, yshift=-1.8cm] (u31) at (u3) {};
\node[tinyvertex, yshift=-.8cm] (u33) at (u31){};
\node[yshift=-.4cm,  font=\tiny,rotate=90] at (u33) {$\cdots$};
\node[tinyvertex, yshift=-.8cm] (u34) at (u33){};

\node[vertex, fill=white, yshift=-1.5cm, xshift=.5cm] (wv) at (u13) {};
\node [xshift=1.6cm] at (wv) {-- vertices of $T$};

\path[-]
(u11) edge[->] (u21)
(u31) edge[->] (u21);

\begin{pgfonlayer}{background}
\node[ellipse, draw=black, fit=(u21)(u24), xshift=-2cm]{};
\node[ellipse, draw=black, fit=(u21)(u24)]{};
\node[ellipse, draw=black, fit=(u31)(u34)]{};
\end{pgfonlayer}
\end{scope}

\node (h1) at (u3) [xshift=1cm, yshift=.5cm] {};
\node (h2) at (u34)[xshift=1cm, yshift=-1cm]{};
\draw[dashed] (h1)--(h2);
\end{scope}


\begin{scope}[xshift=12cm]

\node[vertex, label={below:$v$}] (u1){};
\node[vertex,label={below:$w$}, xshift=2cm] (u2) at (u1) {};
\node[vertex, xshift=2cm] (u3) at (u2) {};

\path[-]
		(u1)
		edge [<-] (u2)
		(u2) edge [->] (u3);

\begin{scope}[bend angle=10]
\node[vertex, label={[font=\tiny] below:$x$}, yshift=-1.8cm] (u11) at (u1) {};
\node[tinyvertex, yshift=-.8cm] (u12) at (u11){};
\node[yshift=-.4cm,  font=\tiny,rotate=90] at (u12) {$\cdots$};
\node[tinyvertex, yshift=-.8cm] (u13) at (u12){};

\node[vertex, fill=white, label={[font=\tiny] below:$x_w$},yshift=-1.8cm] (u21) at (u2) {};
\node[tinyvertex, yshift=-.8cm] (u23) at (u21){};
\node[yshift=-.4cm,  font=\tiny,rotate=90] at (u23) {$\cdots$};
\node[tinyvertex, yshift=-.8cm] (u24) at (u23){};

\node[vertex, fill=white, yshift=-1.8cm] (u31) at (u3) {};
\node[tinyvertex, yshift=-.8cm] (u33) at (u31){};
\node[yshift=-.4cm,  font=\tiny,rotate=90] at (u33) {$\cdots$};
\node[tinyvertex, yshift=-.8cm] (u34) at (u33){};

\path[-]
(u11) edge[<-] (u21)
(u31) edge[<-] (u21);

\begin{pgfonlayer}{background}
\node[ellipse, draw=black, fit=(u21)(u24), xshift=-2cm]{};
\node[ellipse, draw=black, fit=(u21)(u24)]{};
\node[ellipse, draw=black, fit=(u31)(u34)]{};
\end{pgfonlayer}
\end{scope}
\end{scope}

\end{tikzpicture}
}
\caption{Forbidden configurations for $(D,X,H)$.}
\label{fig_forbidden-config}
\end{figure}

\begin{proof}
Statement (a) is a direct consequence of Proposition~\ref{prop_main-prop}(c). In order to prove (b) let $u \in N_D^+(v)$ and let $X_u \cap T=\{x_u\}$. Again from Proposition~\ref{prop_main-prop}(c) it follows that there is a vertex $x \in X_v$ with $xx_u \in A(H)$. Now assume that there is a vertex $x' \in N_H^-(x_u) \cap T$. Let $T'$ be the transversal of $(X,H)/u$ that evolves from $T$ by shifting $x_u$ to $x$. Then, both $x'$ and $x$ are in-neighbors of $x_u$ in $H$ and so $|N_H^-(x_u) \cap T'| \geq 2$, a contradiction to (a). This proves (b). By symmetry, (c) follows. 
\end{proof}

\begin{proposition}\label{prop_oppositearcs->bidirected}
Let $(D,X,H)$ be a  minimal uncolorable  degree-feasible configuration and let $u,v \in V(D)$ such that there are opposite arcs between $u$ and $v$. Then, $H[X_u \cup X_v]$ is bidirected.
\end{proposition}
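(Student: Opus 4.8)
The plan is to assume that $H[X_u\cup X_v]$ is \emph{not} bidirected and to derive a contradiction with the uncolorability of $(D,X,H)$ via the shifting operation. Since $X_u$ and $X_v$ are independent in $H$, every arc of $H$ with both ends in $X_u\cup X_v$ runs between $X_u$ and $X_v$, so $H[X_u\cup X_v]$ is bidirected if and only if every such arc has its reverse in $H$; this property is symmetric in $u$ and $v$. Hence I may assume there are $x_u\in X_u$ and $x_v\in X_v$ with $x_ux_v\in A(H)$ but $x_vx_u\notin A(H)$, and the aim is to contradict that $(D,X,H)$ is uncolorable.

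First I would produce a $v$-deficient acyclic transversal that contains $x_u$. By minimality of $(D,X,H)$, the configuration $(D,X,H-x_ux_v)$ is colorable, so it has an acyclic transversal $T^{\dagger}$; since $(D,X,H)$ itself is uncolorable, $H[T^{\dagger}]$ contains a directed cycle, and because $H[T^{\dagger}]$ becomes acyclic after removing the single arc $x_ux_v$, every such cycle uses $x_ux_v$. Thus $x_u,x_v\in T^{\dagger}$ and $T:=T^{\dagger}\setminus\{x_v\}$ is an acyclic transversal of $(X,H)/v$ containing $x_u$. As $vu\in A(D)$, Proposition~\ref{prop_main-prop}(c) applied to $v$ and $T$ shows that $E_H(X_v,T^+)$ is a perfect matching in $H[X_v\cup T^+]$; in particular there is a unique $y_1\in X_v$ with $y_1x_u\in A(H)$, and $x_u$ is the unique out-neighbour of $y_1$ in $T$. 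Note $y_1\neq x_v$, since $x_vx_u\notin A(H)$.

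Next I would shift. By the shifting operation, $T':=(T\setminus\{x_u\})\cup\{y_1\}$ is an acyclic transversal of $(X,H)/u$ in which $y_1$ has no out-neighbour. Now apply Proposition~\ref{prop_forbidden_config} to the deleted vertex $u$ and transversal $T'$: part~(b), with the out-neighbour $v$ of $u$ (the arc $uv$), yields a vertex $\bar x\in X_u$ with $\bar xy_1\in A(H)$ and $N_H^-(y_1)\cap T'=\varnothing$, while part~(c), with the in-neighbour $v$ of $u$ (the arc $vu$), gives $N_H^+(y_1)\cap T'=\varnothing$. Since the arcs of $H$ from $X_u$ to $X_v$ form a matching and $x_ux_v\in A(H)$ with $y_1\neq x_v$, the arc $\bar xy_1\in A(H)$ forces $\bar x\neq x_u$; and since the arcs of $H$ from $X_v$ to $X_u$ form a matching, the only out-neighbour of $y_1$ in $X_u$ is $x_u\neq\bar x$, so $y_1\bar x\notin A(H)$. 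Consider the transversal $\hat T:=T'\cup\{\bar x\}$ of $(X,H)$ (legitimate, as $X_u\cap T'=\varnothing$). Then $y_1$ has no out-neighbour in $\hat T$ (none in $T'$, and not $\bar x$). On the other hand, $(D,X,H)$ is uncolorable, so $H[\hat T]$ contains a directed cycle, and as $H[T']$ is acyclic this cycle passes through $\bar x$; by the perfect matching $E_H(X_u,(T')^+)$ (Proposition~\ref{prop_main-prop}(c) for $u$ and $T'$), the unique out-neighbour of $\bar x$ inside $\hat T$ is $y_1$, so the cycle enters $y_1$ — impossible, since $y_1$ has no out-neighbour in $\hat T$. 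This contradiction proves the proposition.

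The routine part is the bookkeeping of the two arc directions $uv$ and $vu$ when invoking Proposition~\ref{prop_forbidden_config}, which is phrased for a single deleted vertex. The step I expect to be the crux is the idea of shifting the ``wrong'' colour $y_1$ into the transversal: this turns $y_1$ into a sink of $H[T']$, after which the forbidden-configuration lemma forces $y_1$ to be isolated in $H[T']$, and yet completing the transversal by re-adding a colour $\bar x\in X_u$ is obliged to route a directed cycle through $y_1$ — the source of the contradiction.
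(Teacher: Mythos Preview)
Your proof is correct and follows essentially the same approach as the paper's: both start by invoking minimality on the arc $x_ux_v$ to obtain an acyclic transversal of $(X,H)/v$ containing $x_u$, locate the vertex $y_1\in X_v$ with $y_1x_u\in A(H)$ (the paper calls it $x$), observe $y_1\neq x_v$, and then shift. The only difference is in the final contradiction: the paper performs two shifts from the same starting transversal (once via $x_v$, once via $y_1$) and finds that $x_u$ acquires two distinct in-neighbours in a partial transversal, violating Proposition~\ref{prop_forbidden_config}(a); you instead shift once to $y_1$, complete the partial transversal by adding $\bar x\in X_u$, and argue directly that the resulting full transversal cannot contain a directed cycle through $\bar x$ because its unique out-neighbour $y_1$ is a sink. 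Both endings are short and rely on the same matching structure from Proposition~\ref{prop_main-prop}(c); your version has the minor aesthetic advantage of exhibiting (implicitly) an acyclic transversal of $(X,H)$ rather than contradicting an auxiliary lemma. Note that your invocation of Proposition~\ref{prop_forbidden_config}(c) to conclude $N_H^+(y_1)\cap T'=\varnothing$ is redundant, since you already established this from the shifting step itself.
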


\begin{proof}
Suppose, the statement is false. Then there are vertices $x_u \in X_u$ and $x_v \in X_v$ with $x_ux_v \in A(H)$ and $x_vx_u \not \in A(H)$. Since $(D,X,H)$ is minimal uncolorable, there is an acyclic transversal $T$ of $(X,H-x_ux_v)$. Furthermore, $T$ must contain both $x_u$ and $x_v$ as otherwise $T$ would be an acyclic transversal of $(X,H)$, a contradiction. Then, $T'=T \setminus \{x_v\}$ is an acyclic transversal of $(X',H')=(X,H)/v$. As $u \in N_D^-(v)$, it follows from Proposition~\ref{prop_forbidden_config}(b) that there is a vertex $x \in X_v$ with $xx_u \in A(H)$.  Since $x_vx_u \not \in A(H)$, $x \neq x_v$. Let $T^*$ be the transversal that evolves from $T'$ by shifting $x_u$ to $x_v$. Then, $x_u$ has an in-neighbor $x^*$ from $T^*$ in $H$ (by Proposition~\ref{prop_forbidden_config}(a)) and $x^* \not \in X_v$ (since $x_vx_u \not \in A(H)$). Moreover, $x^*$ is contained in the transversal $\tilde{T}$ that evolves from $T'$ by shifting $x_u$ to $x$ and so $\{x,x^*\} \subseteq N_H^-(x_u) \cap \tilde{T}$. Consequently, $|N_H^-(x_u) \cap \tilde{T}| > 1$, which contradicts Proposition~\ref{prop_forbidden_config}(a). Hence $x=x_v$, and so $x_vx_u \in A(H)$, a contradiction. 
\end{proof}

In particular, the above proposition implies the following concerning the shifting operation. Let $(D,X,H)$ be a minimal uncolorable degree-feasible configuration, let $v \in V(D)$ and let $T$ be an acyclic transversal of $(X',H')=(X,H)/v$ (which exists by Proposition~\ref{prop_main-prop}(b)). Then it follows from the above proposition together with Proposition~\ref{prop_forbidden_config}(b)(c) that for each vertex $u$ that is in $D$ adjacent to $v$ and for the unique vertex $x_u \in X_u \cap T$ there is exactly one vertex $x_v \in X_v$ that is in $H$ adjacent to $x_u$. Hence, $x_v$ is the unique vertex from $X_v$ to which we can shift the color $x_u$. Thus, in the following we may regard the shifting operation as an operation in the digraph $D$ rather than in $H$ and write $\mathbf{u \to v}$ in order to express that we shift the color from the corresponding vertex $x_u$ to $x_v$.

As another consequence of Proposition~\ref{prop_oppositearcs->bidirected} we easily obtain the following corollary.

\begin{corollary}\label{prop_Dbidirected->Hbidirected}
Let $(D,X,H)$ be a degree-feasible minimal uncolorable configuration such that $D$ is bidirected. Then $H$ is bidirected, too.
\end{corollary}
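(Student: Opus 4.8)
The plan is to derive the corollary directly from Proposition~\ref{prop_oppositearcs->bidirected} by a short local argument. Since $D$ is bidirected, every arc $uv \in A(D)$ comes with its opposite $vu \in A(D)$, so for every pair of adjacent vertices $u,v \in V(D)$ there are opposite arcs between them. Proposition~\ref{prop_oppositearcs->bidirected} then applies to this pair and tells us that $H[X_u \cup X_v]$ is bidirected.

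Next I would argue that this local information about every adjacent pair suffices to conclude that $H$ itself is bidirected. Recall from condition \textbf{(C2)} that $A(H) = \bigcup_{a \in A(D)} M_a$, so every arc of $H$ lies in some matching $M_a$ for an arc $a = uv \in A(D)$; in particular its two end-vertices lie in $X_u$ and $X_v$ for adjacent vertices $u,v$ of $D$. Thus an arbitrary arc $xy \in A(H)$ has $x \in X_u$, $y \in X_v$ with $u,v$ adjacent in $D$ (note $u \neq v$ since each $X_v$ is independent in $H$). Applying the previous paragraph, $H[X_u \cup X_v]$ is bidirected, hence it also contains the arc $yx$. Since $xy \in A(H)$ was arbitrary, every arc of $H$ has its opposite arc in $H$, i.e.\ $H$ is bidirected.

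I do not expect any genuine obstacle here: the corollary is essentially a repackaging of Proposition~\ref{prop_oppositearcs->bidirected}, quantified over all edges of the (bidirected) digraph $D$. The only point requiring a line of care is the bookkeeping that every arc of $H$ is witnessed by an arc of $D$ between \emph{distinct} adjacent vertices --- which is exactly what \textbf{(C1)} and \textbf{(C2)} guarantee (the sets $X_v$ are pairwise disjoint and independent in $H$, and $A(H)$ is the union of the matchings $M_a$). With that observation in place the proof is a two-sentence deduction.

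\begin{proof}
Let $xy \in A(H)$ be an arbitrary arc of $H$. By \textbf{(C1)} and \textbf{(C2)}, there is an arc $a = uv \in A(D)$ with $xy \in M_a$; in particular $x \in X_u$ and $y \in X_v$, and $u \neq v$ since each $X_w$ is an independent set of $H$. As $D$ is bidirected, the opposite arc $vu$ also belongs to $A(D)$, so there are opposite arcs between $u$ and $v$. By Proposition~\ref{prop_oppositearcs->bidirected}, the digraph $H[X_u \cup X_v]$ is bidirected, and hence it contains the arc $yx$ as well. Since $xy \in A(H)$ was arbitrary, $H$ is bidirected.
\end{proof}
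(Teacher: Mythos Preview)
Your proof is correct and follows exactly the route the paper intends: the corollary is stated as an immediate consequence of Proposition~\ref{prop_oppositearcs->bidirected}, and your argument spells out precisely that deduction. The only addition you make is the explicit bookkeeping via \textbf{(C1)} and \textbf{(C2)} to locate an arbitrary arc of $H$ between two bags $X_u$, $X_v$ with $u,v$ adjacent in $D$, which is the natural way to turn the paper's one-line remark into a written proof.
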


Having all those tools available, we are finally ready to prove our main theorem.

\subsection{Proof of Theorem~\ref{theorem:main-result}}

This subsection is devoted to the proof of Theorem~\ref{theorem:main-result}, which we recall for convenience.

\setcounter{theorem}{6}
\begin{theorem} \label{theorem:main-result}
Suppose that $(D,X,H)$ is a degree-feasible configuration. Then, $(D,X,H)$ is minimal uncolorable if and only if $(D,X,H)$ is constructible.
\end{theorem}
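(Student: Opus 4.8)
The easy direction is that every constructible configuration is minimal uncolorable and degree-feasible: each of the three basic configurations (K, C, BC) has already been observed to have this property, and Proposition~\ref{prop_merging} shows that merging preserves it. Since the constructible configurations are generated from the basic ones by merging, an induction on the number of merges finishes this direction immediately. The substance of the theorem is the converse: every minimal uncolorable degree-feasible configuration $(D,X,H)$ is constructible. I would prove this by induction on $|A(D)|$ (equivalently on $|D|$, using that $D$ is Eulerian by Proposition~\ref{prop_main-prop}(a)).

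\textbf{Reduction to 2-connected $D$.} The first step is to handle the case where $D$ has a separating vertex $v^*$. Then $D$ decomposes as $D^1$ and $D^2$ glued at $v^*$, with $H = H^1 \cup H^2$ split accordingly; Proposition~\ref{prop_merging} says that $(D^i,X^i,H^i)$ is again minimal uncolorable and degree-feasible, so by the induction hypothesis each $(D^i,X^i,H^i)$ is constructible, and hence so is $(D,X,H)$ since constructible configurations are closed under merging. (One also needs the base case: a single-vertex digraph, or more generally the smallest configurations, which are K-configurations of order $1$.) So from now on we may assume $D$ is 2-connected; since $D$ is also Eulerian, its underlying graph $G(D)$ is 2-connected, and the goal becomes to show $D$ is a DP-brick (a complete digraph, a directed cycle, or a bidirected cycle) with $(X,H)$ the corresponding basic configuration.

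\textbf{The 2-connected case — the core of the argument.} Here I would distinguish whether $D$ contains a digon. If $D$ has no digon at all, I would aim to show $D$ is a directed cycle and $(X,H)$ is a C-configuration: using $|X_v| = d_D^+(v) = d_D^-(v)$ and the shifting operation (Proposition~\ref{prop_forbidden_config} and the discussion after Proposition~\ref{prop_oppositearcs->bidirected}), one tracks how colors can be shifted around; a vertex of out-degree $\geq 2$ with no incident digon should yield, via two independent shift sequences, an acyclic transversal of the whole configuration, contradicting uncolorability — forcing every out-degree (hence in-degree) to be $1$, i.e. $D$ is a directed cycle, and then the single-color structure of $H$ is forced. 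If $D$ does contain a digon, then by Proposition~\ref{prop_oppositearcs->bidirected} the relevant part of $H$ is bidirected; I would argue that in a minimal uncolorable 2-connected configuration the digon relation on $V(D)$ is "contagious" enough that $D$ must be entirely bidirected (any non-digon arc, together with nearby digons, would again permit completing a partial acyclic transversal), so by Corollary~\ref{prop_Dbidirected->Hbidirected} $H$ is bidirected too. At that point $(D,X,H)$ projects to an undirected degree-feasible configuration $(G(D), X, G(H))$, which is minimal uncolorable (the directed–undirected equivalence for bidirected graphs established around Theorem~\ref{thm:undirected}, plus the obvious edge-by-edge correspondence of minimality). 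Now invoke the undirected characterization, Theorem~\ref{theorem_undirected-case}: $(G(D),X,G(H))$ is constructible from undirected K- and BC-configurations by merging; since $G(D)$ is 2-connected it is a single K- or BC-configuration, and bidirecting it gives back a directed K- or BC-configuration. This closes the induction.

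\textbf{Main obstacle.} The delicate part is the digon-free 2-connected case and, within the digon case, showing that the presence of one digon propagates to all of $D$. Both rely on a careful combinatorial analysis of the shifting operation: one must show that in a $2$-connected digraph $D$ where some vertex has out- or in-degree at least $2$ and the local structure is not "cyclically rigid", one can choose shift sequences along two arc-disjoint paths (or around a suitable subcycle) that jointly extend a transversal of $(X,H)/v$ to an acyclic transversal of $(X,H)$ — contradicting uncolorability. Making this selection precise, and ruling out that the two shift sequences interfere (create a directed cycle together), is where the real work lies; the rest is bookkeeping and appeals to Propositions~\ref{prop_main-prop}–\ref{prop_oppositearcs->bidirected} and Theorem~\ref{theorem_undirected-case}.
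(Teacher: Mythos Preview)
Your overall architecture matches the paper's: the easy direction via Proposition~\ref{prop_merging}, induction on $|D|$, reduction to the 2-connected case, and within the 2-connected case the dichotomy ``$D$ has a digon / $D$ has no digon'' with the digon case reduced to the undirected Theorem~\ref{theorem_undirected-case} via Corollary~\ref{prop_Dbidirected->Hbidirected}. That is exactly what the paper does.

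There is, however, a genuine gap in your separating-vertex step. You write ``$D$ decomposes as $D^1$ and $D^2$ glued at $v^*$, with $H = H^1 \cup H^2$ split accordingly; Proposition~\ref{prop_merging} says \ldots''. But Proposition~\ref{prop_merging} only gives you the equivalence \emph{once you already know} that $(D,X,H)$ arises by merging two configurations. It does not tell you how to split $X_{v^*}$ into disjoint pieces $X_{v^*}^1$ and $X_{v^*}^2$ of the correct sizes, nor why $H$ has no arcs between $X_{v^*}^1$ and $\bigcup_{u\in V(D^2)\setminus\{v^*\}} X_u$ (and symmetrically). A priori a color in $X_{v^*}$ could be usable on both sides, or $H$ could have ``cross'' arcs that are not captured by any merge. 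The paper spends a full page on this: it defines $X_j$ as the set of colors in $X_{v^*}$ that never appear in a partial acyclic transversal on the $D^j$-side, proves $X_{v^*}=X_1\cup X_2$, then uses Proposition~\ref{prop_main-prop}(a) and a counting argument to force $|X_j|=d_{D^j}^+(v^*)$ and $X_1\cap X_2=\varnothing$, and finally uses minimality of $(D,X,H)$ to kill any leftover arcs of $H$ not in $H^1\cup H^2$. Only then can Proposition~\ref{prop_merging} be invoked. You should flag this as nontrivial work, not bookkeeping.

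For the digon-free 2-connected case your sketch (``a vertex of out-degree $\geq 2$ should yield, via two independent shift sequences, an acyclic transversal'') is not what the paper does and is not obviously workable as stated: two shift paths starting from a single vertex can very well re-create a directed cycle in $H$, and ruling this out seems to require exactly the kind of global structural control you are trying to avoid. The paper instead analyzes cycles of $G(D)$ directly: it shows (via shifting arguments of the Proposition~\ref{prop_forbidden_config} type) that $G(D)$ contains no $K_4$, no induced $K_4^-$, that every induced cycle of $G(D)$ lifts to a directed cycle in $D$, and that every cycle of $G(D)$ is induced; together these force $D$ itself to be a single directed cycle. Your ``digon propagation'' sketch for the other sub-case is closer to the paper's Claims~\ref{claim_inducedcycle}--\ref{claim_everything-bidir}, but again the actual mechanism is a cycle-by-cycle shifting argument rather than a local extension of a partial transversal.
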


\begin{proof}
If $(D,X,H)$ is constructible, then $(D,X,H)$ is minimal uncolorable (by Proposition~\ref{prop_merging} and as each K-, C-, and BC-configuration is a minimal uncolorable degree-feasible configuration). 

Now let $(D,X,H)$ be a minimal uncolorable degree-feasible configuration. We prove that $(D,X,H)$ is constructible by induction on the order of $D$. If $|D|=1$, then $V(D)=\{v\}$, $X_v=\varnothing$ and $H=\varnothing$ and so $(D,X,H)$ is a K-configuration. Thus, we may assume that $|D| \geq 2$.  By Proposition~\ref{prop_main-prop}(a), 
\begin{align}\label{eq_eulerian}
|X_v|=d_D^+(v)=d_D^-(v) \quad \text{for all }v \in V(D).
\end{align}
We distinguish between two cases.

\case{1}{$D$ contains a separating vertex $v^*$.} Then, $D$ is the union of two connected induced subdigraphs $D^1$ and $D^2$ with $V(D^1) \cap V(D^2) = \{v^*\}$ and $|D^j| < |D|$ for $j \in \{1,2\}$. By equation \eqref{eq_eulerian}, all vertices from $D^j$ except from $v^*$ are Eulerian in $D^j$ (for $j \in \{1,2\}$). However, since $$\sum_{u \in V(D^j)} d_{D^j}^+(u)=\sum_{u \in V(D^j)}d_{D^j}^-(u)=|A(D^j)|$$ is the number of arcs of $D^j$, it follows that $d_{D^j}^+(v^*)=d_{D^j}^-(v^*)$ and so $D^j$ is Eulerian for $j \in \{1,2\}$. For $j \in \{1,2\}$, by $\mathcal{T}^j$ we denote the set of all subsets $T$ of $H$ with $|T \cap X_v|=1$ for all $v \in V(D^j)$ and $|T \cap X_u|=0$ for all $u \in V(D^{3-j})\setminus\{v^*\}$ such that $H[T]$ is acyclic. As $(D,X,H)$ is uncolorable and degree-feasible, both $\mathcal{T}^1$ and $\mathcal{T}^2$ are non-empty (by Proposition~\ref{prop_main-prop}(b)). Moreover, for $j \in \{1,2\}$, let $X_j$ be the set of all vertices of $X_{v^*}$ that do not occur in any set from $\mathcal{T}^j$. We claim that $X_{v^*}=X_1 \cup X_2$. For otherwise, there is a vertex $x \in X_{v^*} \setminus (X_1 \cup X_2)$. Then, $x$ is contained in two sets $T^1 \in \mathcal{T}^1$ and $T^2 \in \mathcal{T}^2$, and so $T=T^1 \cup T^2$ is an acyclic transversal of $(X,H)$. Thus, $(D,X,H)$ is colorable, a contradiction. Consequently, $X_{v^*}=X_1 \cup X_2$. For $j \in \{1,2\}$, we define a cover $(X^j,H^j)$ of $D^j$ as follows. For $v \in V(D^j)$, let
$$X_v^j=\begin{cases}
X_v & \text{if } v \neq v^* \\
X_j & \text{if } v = v^*,
\end{cases}$$
and let $H^j=H[\bigcup_{v \in V(D^j)} X_v^j]$. Then, $(D^j,X^j,H^j)$ is an uncolorable feasible configuration for $j \in \{1,2\}$: Suppose w.l.o.g. that $(D^1, X^1, H^1)$ has an acyclic transversal $T$. Then $T$ is in $\mathcal{T}^1$, but $T$ contains a vertex $x \in X_{v^*}^1=X_1$, which is impossible. Furthermore, for each vertex $v \in V(D^j) \setminus \{v^*\}$, equation \eqref{eq_eulerian} implies that $|X_v|=d_{D}^+(v)=d_{D^j}^+(v)$. As $(D^j,X^j,H^j)$ is uncolorable and $D^j$ is connected, it follows from Proposition~\ref{prop_main-prop}(a) that $|X_{v^*}^j| \leq d_{D^j}^+(v^*)$ for $j \in \{1,2\}$. Since $X_{v^*}=X_1 \cup X_2 = X^1_{v^*} \cup X^2_{v^*}$, we conclude from \eqref{eq_eulerian} that
$$|X_{v^*}^1| + |X_{v^*}^2| \geq |X_{v^*}^1 \cup X_{v^*}^2| = |X_{v^*}| = d_{D}^+(v^*) = d_{D^1}^+(v^*) + d_{D^2}^+(v^*),$$
and, thus, $|X_{v^*}^j|=d_{D^j}^+(v^*)(=d_{D^j}^-(v^*))$ and $X_{v*}^1 \cap X_{v*}^2 = \varnothing$. Consequently, $(D^j,X^j,H^j)$ is a degree-feasible configuration. Moreover, $H'=H^1 \cup H^2$ is a spanning subdigraph of $H$ and $V(H^1) \cap V(H^2) = \varnothing$. So, $(D,X,H')$ is a degree-feasible configuration that is obtained from two ismorphic copies of $(D^1,X^1,H^1)$ and $(D^2,X^2,H^2)$ by the merging operation. Clearly, $(D,X,H')$ is uncolorable. Otherwise, there would exist an acyclic transversal $T$ of $(X,H')$ and by symmetry we may assume that $T$ would contain a vertex of $X_{v^*}^1$. But then, $T^1=T \cap V(H^1)$ would be an acyclic transversal of $(X^1,H^1)$, contradicting that $(D^1,X^1,H^1)$ is uncolorable. As $(D,X,H)$ is minimal uncolorable and as $H'$ is a spanning subhypergraph of $H$, this implies that $H=H'$ and $(D,X,H)$ is obtained from two isomorphic copies of $(D^1,X^1,H^1)$ and $(D^2,X^2,H^2)$ by the merging operation. Then, by Proposition~\ref{prop_merging}, both $(D^1,X^1,H^1)$ and $(D^2,X^2,H^2)$ are minimal uncolorable. Applying the induction hypothesis leads to $(D^j,X^j,H^j)$ being constructible for $j \in \{1,2\}$, and so $(D,X,H)$ is constructible. Thus, the proof of the first case is complete.
\medskip

\case{2}{$D$ is a block.} Then, each vertex of $D$ is contained in a cycle of the underlying graph $G(D)$. We prove that $(D,X,H)$ is a K-, C- or BC-configuration by examining the cycles that may occur in $G(D)$ and showing that the cycles always imply that the structure of $(D,X,H)$ is as claimed. This is done via a sequence of claims. In the first three claims we analyze the case where $D$ contains a digon and show that in this case, both $D$ and $H$ are bidirected. Then, we can apply Theorem~\ref{theorem_undirected-case} to the undirected configuration $(G(D),X,G(H))$ in order to deduce that $(D,X,H)$ is a K- or BC-configuration. Afterwards, we analyze the case that $D$ does not contain any digons and prove that this implies that $(D,X,H)$ is a C-configuration. Recall that if $C$ is a cycle in the underlying graph $G(D)$, then $D_C$ is the maximum subdigraph of $D$ such that $G(D_C)=C$.
\begin{claim}\label{claim_c3}
Let $C$ be a cycle of length $3$ in the underlying graph $G(D)$. If $D_C$ is not a directed cycle,  then $V(C)$ induces a complete digraph in $D$.
\end{claim}

\begin{proof2}
Let $v_1,v_2,v_3$ be the vertices of $C$. By symmetry, assume that  $\{v_3v_1,v_1v_2,v_3v_2\} \subseteq A(D)$. We prove that $v_1v_3 \in A(D)$. Let $T$ be an acyclic transversal of $(X',H')=(X,H)/v_1$, let $x_j$ be the unique vertex from $X_{v_j} \cap T$ (for $j \in \{2,3\}$) and let $x_1 \in X_{v_1}$ such that $x_3x_1 \in A(H)$ (such a vertex exists by Proposition~\ref{prop_forbidden_config}(c)). Then, by Proposition~\ref{prop_forbidden_config}(c), $x_3x_2 \not \in A(H)$. Furthermore, by Proposition~\ref{prop_forbidden_config}(a), $x_1$ must have an out-neighbor $x$ in $T$. Assume that $x \in T \setminus \{x_2,x_3\}$. Then we can shift $v_3 \to v_1$, $v_2 \to v_3$ and $v_1 \to v_2$ and get a new acyclic transversal $T'$ of $(X',H')$. Moreover, if $x_2'$ is the vertex from $X_{v_2} \cap T'$, due to the shifting we have $x_1x_2' \in A(H)$. Since $T \setminus (X_{v_2} \cup X_{v_3}) = T' \setminus (X_{v_2} \cup X_{v_3})$ we conclude $N_H^+(x_1) \cap T' \supseteq \{x_2' , x \}$ and so $|N_H^+(x_1) \cap T'| \geq 2$, contradicting Proposition~\ref{prop_forbidden_config}(a) (see Figure~\ref{fig_1-claim1}). Hence, $x \in \{x_2,x_3\}$. If $x=x_2$ (and so $x_2'=x_2$), then starting from $T$ and then shifting $v_3 \to v_1$ and $v_2 \to v_3$ leads to an acyclic transversal $T^*$ of $(X,H)/v_2$ such that $|N_H^-(x_2)   \cap T^*| \geq 2$, in contradiction to Proposition~\ref{prop_forbidden_config}(a). Thus, $x=x_3$ and so $x_1x_3 \in A(H)$. However, this implies $v_1v_3 \in A(D)$ (by (C2)), as claimed. By symmetry we conclude that $D[V(C)]$ is a complete digraph and the proof is complete.
\end{proof2}
\begin{figure}[htp]
\centering
\resizebox{.81\linewidth}{!}{
\input{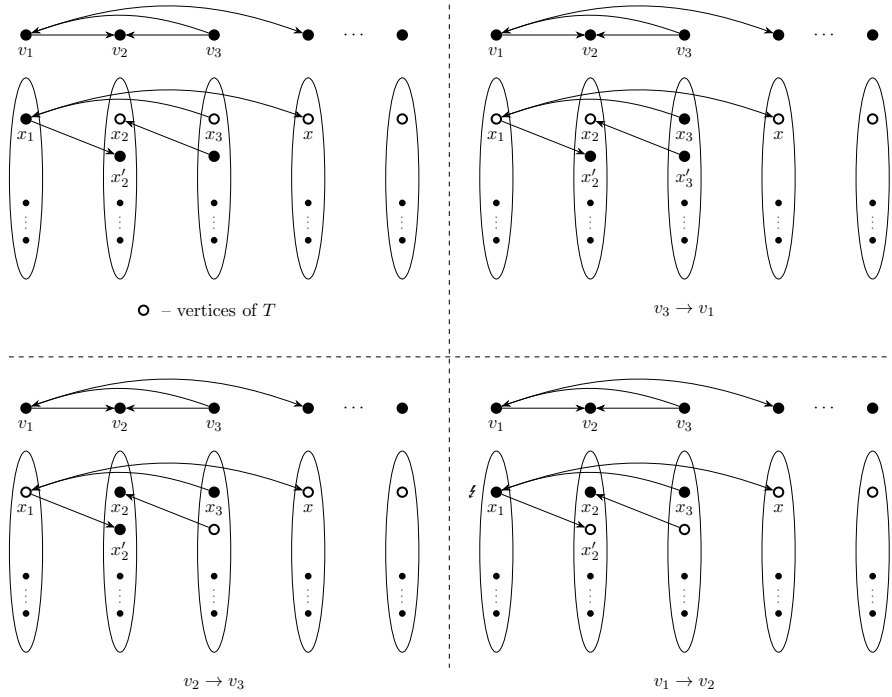}
}

\caption{$(D,X,H)$ before and after shifting $v_3 \to v_1, v_2 \to v_3$ and $v_1 \to v_2$.}
\label{fig_1-claim1}
\end{figure}

\begin{claim} \label{claim_inducedcycle} 
Let $C$ be an induced cycle in the underlying graph $G(D)$. If $D_C$ contains a digon, then $D_C$ is a bidirected cycle.
\end{claim}
\begin{proof2}
Assume, to the contrary, that $D_C$ is not bidirected. Then (by symmetry) we can choose a cyclic ordering $v_1,v_2,\ldots,v_p$ of the vertices of $C$ such that $v_1v_2, v_2v_1$ and $v_1v_p$ are arcs of $D$ and that $v_pv_1 \not \in A(D)$. Let $T$ be an acyclic transversal of $(X',H')=(X,H)/v_1$. For $i \in \{2,3,\ldots,p\}$ let $x_i$ be the vertex from $X_{v_i} \cap T$. By Proposition~\ref{prop_forbidden_config}(b) and Proposition~\ref{prop_oppositearcs->bidirected}, there is a vertex $x \in X_{v_1}$ that is joined to $x_2$ by opposite arcs and a vertex $x' \in X_{v_1}$ with $x'x_p \in A(H)$. Moreover, by Proposition~\ref{prop_forbidden_config}(a), $x \neq x'$. By shifting the vertices $v_2\to v_1,v_3 \to v_2, \ldots, v_p \to v_{p-1}$ counterclockwise on the cycle $C$ we obtain from Proposition~\ref{prop_forbidden_config}(c) that $x$ has an out-neighbor $x_p'$ in $X_p$. If we further shift $v_1 \to v_p$, we get a new acyclic transversal $T'$ of $(X',H')$ such that $x_p' \in T'$. By Proposition~\ref{prop_forbidden_config}(a), there must exist a vertex $y \in T'$ with $yx \in A(H)$. As $x_2$ is the unique in-neighbor of $x$ from $T$, since $v_1$ has no neighbors besides $v_2$ and $v_p$ from $V(C)$, and as the shifting only affected vertices from $C$, we conclude that $y \in X_{v_2} \cup X_{v_p}$. However, since $xx_p' \in A(H)$, it follows from Proposition~\ref{prop_forbidden_config}(a) that $x_2 \not \in T'$. Hence, $y \in X_{v_p}$ and so $v_pv_1 \in A(D)$, a contradiction.
\end{proof2}

\begin{claim}\label{claim_everything-bidir}
Suppose that $D$ contains a digon. Then, $D$ is bidirected.
\end{claim}
\begin{proof2}
Assume, to the contrary, that $D$ is not bidirected. As $D$ is a block this implies that in the underlying graph $G[D]$ there is a cycle $C$ of minimum length such that $D_C$ contains a digon but is not bidirected. Since $C$ has minimum length, we conclude that $C$ is an induced cycle of $G(D)$, but then it follows from Claim~\ref{claim_inducedcycle} that $D_C$ is bidirected, a contradiction. This proves the claim.
\end{proof2}

Suppose that $D$ contains at least one digon. Then, $D$ is bidirected (by Claim~\ref{claim_everything-bidir}) and it follows from Corollary~\ref{prop_Dbidirected->Hbidirected} that $H$ is bidirected, too. Consequently, $(G(D),X,G(H))$ is a degree-feasible configuration. Furthermore, an acyclic transversal of $(X,H)$ is an independent transversal of $(X,G(H))$ and vice versa, and it easy to check that $(G(D),X,G(H))$ is minimal uncolorable (as $(D,X,H)$ is minimal uncolorable). Then, as $G(D)$ is a block, it follows from Theorem~\ref{theorem_undirected-case} that $(G(D),X,G(H))$ is a K- or a BC-configuration. As a consequence,  $(D,X,H)$ is a K- or a BC-configuration and there is nothing left to show. Hence, from now on we may assume the following:
\begin{align} \label{eq_no-digon}
D~\text{does not contain a digon}. 
\end{align}
In the remaining part of the proof we will show that under the assumption \eqref{eq_no-digon}, the configuration $(D,X,H)$ is a C-configuration.

\begin{claim} \label{claim_K_4}
The underlying graph $G(D)$ does not contain any $K_4$.
\end{claim}
\begin{proof2}
Otherwise, $G(D)$ contains a cycle $C$ such that $D_C$ is not a directed cycle. Hence, by Claim~\ref{claim_c3}, $D$ would contain a complete digraph on three vertices, which contradicts \eqref{eq_no-digon}.
\end{proof2}

Recall that $K_4^-$ denotes the (undirected) graph that results from a $K_4$ by deleting any edge.

\begin{claim} \label{claim_K_^4-}
The underlying graph $G(D)$ does not contain any induced $K_4^-$.
\end{claim}

\begin{proof2}
Assume that $G(D)$ contains an induced $K_4^-$, say $\tilde{G}=G(\tilde{D})$. Then, by \eqref{eq_no-digon} and Claim~\ref{claim_c3}, $V(\tilde{D})=\{v_1,v_2,v_3,v_4\}$ and $A(\tilde{D})=\{v_1v_2,v_1v_3,v_2v_4,v_3v_4,v_4v_1\}$. Let $T$ be an acyclic transversal of $(X',H')=(X,H)/v_1,$ and for $i \in \{2,3,4\}$ let $x_i \in X_{v_i} \cap T$. Then it follows from Proposition~\ref{prop_forbidden_config}(b),(c) that there are vertices $x,x' \in X_{v_1}$ with $x'x_2 \in A(H)$ and $xx_3 \in A(H)$. By Proposition~\ref{prop_forbidden_config}(a), $x \neq x'$. By shifting $v_3 \to v_1$, we obtain that $x_4$ has an in-neighbor $x_3' \in X_{v_3}$ (by Proposition~\ref{prop_forbidden_config}(c)). We claim that $x'x_3' \in A(H)$. To see this, starting from $T$,  we can shift $v_3 \to v_1, v_4 \to v_3, v_2 \to v_4$ and then $v_1 \to v_2$ and obtain another acyclic transversal $T'$ of $(X',H')$ with $x_3' \in T'$. Then, $x'$ must have an out-neighbour $y$ in $T'$ (by Proposition~\ref{prop_forbidden_config}(a)). However, as $x \neq x'$, we deduce that $y \not \in X_{v_2}$. As we only shifted along vertices of $\tilde{D}$, we conclude that $y \not \in T' \setminus (X_2 \cup X_3 \cup X_4)$ (since otherwise $\{y,x_2\} \subseteq |N_H^+(x') \cap T|$, which leads to a contradiction to Proposition~\ref{prop_forbidden_config}(a)). Moreover, as $v_1v_4 \not \in A(D)$, this implies that $y \in X_{v_3}$ and so $y=x_3'$. Hence, $x'x_3' \in A(H)$, as claimed. But now, starting from $T$ we can shift $v_3 \to v_1, v_4 \to v_3$ and $v_1 \to v_4$ and obtain an acyclic transversal $T^*$ of $(X',H')$ that contains both $x_2$ and $x_3'$. As a consequence, $|N_H^+(x') \cap T^*|\geq2$, which contradicts Proposition~\ref{prop_forbidden_config}(a). This proves the claim.
\end{proof2}

\begin{claim}\label{claim_induced-cycle->cyclic}
Let $C$ be an induced cycle of the underlying graph $G(D)$. Then, $D_C$ is a directed cycle.
\end{claim}

\begin{proof2}
The proof is by reductio ad absurdum. Then, we can choose a cyclic ordering of the vertices of $C$, say $v_1,v_2,\ldots,v_p$, such that $\{v_1v_2, v_1v_p\}\subseteq A(D)$. Furthermore, let $T$ be an acyclic transversal of $(X',H')=(X,H)/v_1$ and, for $i \in \{1,2,\ldots,p\}$ let $x_i \in X_{v_i} \cap T$. Then, by Proposition~\ref{prop_forbidden_config}(a),(b), there are vertices $x \neq x'$ from $X_{v_1}$ with $xx_2 \in A(H)$ and $x'x_p \in A(H)$. Moreover, by shifting $v_p \to v_1, v_{p-1}\to v_p, \ldots, v_2 \to v_3$ clockwise around $C$, we obtain that $x'$ has an out-neighbor $x_2' \in X_{v_2}$ (by Proposition~\ref{prop_forbidden_config}(c)). We claim that $x_3x_2' \in A(H)$. Assume, to the contrary, that $x_3x_2' \not \in A(H)$ and let $T'$ be the transversal that results from $T$ by shifting $v_2 \to v_1$. Then, $x_2'$ must have an in-neighbor $y$ in $T'$ (by Proposition~\ref{prop_forbidden_config}(a)) and $y \not \in X_{v_i}$ for $i \in \{1,2,\ldots,p\}$ (as $x_3x_2' \not \in A(H)$, as $x' \not \in T'$ and as $C$ is an induced cycle). If instead, starting from $T$, we shift the vertices $v_p \to v_1, v_{p-1}v_p, \ldots, v_2 \to v_3$, we obtain an acyclic transversal $T^*$ of $(X,H)/v_2$ that contains both $x'$ as well as $y$, contradicting Proposition~\ref{prop_forbidden_config}(a) (as $x_2'$ has the two in-neighbors $x',y$ in $T^*$). Thus, $x_3x_2' \in A(H)$ and hence $v_3v_2 \in A(H)$.
As a consequence, there is also a vertex $x_3' \neq x_3$ from $X_{v_3}$ such that $x_3'x_2 \in A(H)$. Now we can shift $v_2 \to v_1$ and obtain an acyclic transversal of $(X,H)/v_2$. By repeating the same argumentation as above we conclude that $x_3'x_4 \in A(H)$.  Now, we can iterate this procedure for the remaining vertices of $C$ and obtain the following:
\begin{align}\label{eq_alternating}
\begin{gathered}
D_C \text{  is \textbf{alternating}, i.e. the vertices from } D_C \text{ alternatively have 
two}\\ \text{ in-neighbours and two out-neighbours in } D_C.
\end{gathered}
\end{align}

\noindent Note that this implies, in particular, that $C$ is even. Moreover, we conclude that for $i \in \{2,\ldots,p\}$ there are vertices $x_i \neq x_i'$ from $X_{v_i}$ such that the following holds:
\begin{itemize}
\item There is an acyclic transversal $T$ of $(X',H')=(X,H)/v_1$ that contains the vertices $x_2,x_3,\ldots,x_p$, and
\item $\{xx_2, x'x_2', xx_p', x'x_p \} \subseteq A(H)$ and for $i \in \{2,4,\ldots,p-2\}$ we have $x_{i+1}x_i',x_{i+1}'x_i \in A(H)$. 
\end{itemize}
Note that (beginning from $T$) by shifting $v_2 \to v_1, v_3 \to v_2, \ldots v_p \to v_{p-1}$ counterclockwise around $C$ and then shifting $v_1 \to v_p$ we obtain an acyclic transversal $T'$ of $(X',H')$ that contains the vertices $x_2',x_3',\ldots,x_p'$. 

Since $(D,X,H)$ is minimal uncolorable, $H[T \cup \{x\}]$ contains a directed cycle that must contain $x$, say $C_x$. Moreover, by Proposition~\ref{prop_forbidden_config}(a) and since $xx_2 \in A(H)$, $x$ and $x_2$ are consecutive on $C_x$. Let $z$ denote the vertex different from $x_2$ such that $x$ and $z$ are consecutive on $C_x$. Then, $z \not \in \{x_3,x_4,\ldots,x_p\}$. This is due to the fact that $C$ is an induced cycle in $G(D)$ (and so $v_1v_i \not \in A(D)$ for $i \in \{3,4,\ldots,p-1\}$) and that $xx_p' \in A(H)$ and, therefore, $xx_p \not \in A(H)$. Moreover, we obtain the following:
\begin{align} \label{eq_Cx-induced}
\begin{gathered}
C_x \text{ is an induced directed cycle of }  H[T \cup \{x\}] \text{ and}\\
\text{no vertex from } C_x  \text{ is adjacent to any vertex from } T\setminus V(C_x).
\end{gathered}
\end{align}
\noindent Otherwise, starting from $T$ we could shift the vertices around $C_x$ and would obtain vertices $v^* \in V(D)$, $x^* \in X_{v^*} \cap V(C_x)$ and an acyclic transversal $T^*$ of $(H,X)/v^*$ such that the neighbors of $x^*$ on $C_x$ are in $T^*$ and such that $x^*$ has another in- or out-neighbor in $T^*$, contradicting Proposition~\ref{prop_forbidden_config}(a). Finally, we conclude that
\begin{gather} \label{eq_Cx-novertex}
\text{no vertex from } \{x_3,x_4,\ldots,x_p\} \text{ is in }  V(C_x).
\end{gather}
\noindent Assume, to the contrary, that there is an index $i \neq 2$ with $x_i \in V(C_x)$. Then, as $C$ is induced and since $x_ix_{i+1}$ as well as $x_{i-1}x_i$ are not arcs of $H$, both neighbors of $x_i$ in $C_x$ must be from $V(H) \setminus \{x_2,x_3,\ldots,x_p\}$. But then, starting from $T$ we can shift $x_2 \to x, x_3 \to x_2, \ldots, x_i \to x_{i-1}$ and obtain an acyclic transversal $\tilde{T}$ of $(X,H)/v_i$ such that $x_i$ either has two in- or out-neighbors from $\tilde{T}$, contradicting Proposition~\ref{prop_forbidden_config}(a).

\noindent By analogous arguments we conclude that $H[T' \cup \{x\}]$ contains a directed cycle $C_x'$ and $x$ and $x_p'$ are consecutive on $C_x'$. Furthermore, if $z'$ denotes the vertex different from $x_p'$ such that $x$ and $z'$ are consecutive on $C_x'$, we have $z \not \in \{x_2',x_3',\ldots,x_{p-1}'\}$. Moreover, the following holds:
\begin{align}\label{eq_Cx'induced}
\begin{gathered}
C_x' \text{ is an induced directed cycle of  } H[T' \cup \{x\}] \text{ and}\\
\text{no vertex from  } C_x' \text{ is adjacent to any vertex from }T'\setminus V(C_x')
\end{gathered}
\end{align}
\noindent and
\begin{gather}\label{eq_Cx'-novertex}
\text{no vertex from  } \{x_2',x_3',\ldots,x_{p-1}'\} \text{ is in  }V(C_x').
\end{gather}
\noindent Since $T \setminus \{x_2,x_3,\ldots,x_p\} = T' \setminus \{x_2',x_3',\ldots,x_p'\}$, it follows from Proposition~\ref{prop_forbidden_config}(a) that $z=z'$. Let $y$ denote the vertex from $C_x$ different from $x$ such that $x_2$ and $y$ are consecutive on $C_x$ and let $y'$ denote the vertex from $C_x'$ different from $x$ such that $x_p'$ and $y'$ are consecutive on $C_x'$. Then, by combining \eqref{eq_Cx-induced}, \eqref{eq_Cx-novertex}, \eqref{eq_Cx'induced} and \eqref{eq_Cx'-novertex} with the fact that $T \setminus \{x_2,x_3,\ldots,x_p\}=T' \setminus \{x_2',x_3',\ldots,x_p'\}$, we conclude that $y=y'$ and that $H[V(C_x) \setminus \{x_2\}] = H[V(C_x') \setminus \{x_p'\}]$ is an induced directed path of $H$.Let $v \in V(D)$ denote the vertex such that $y \in X_v$. Then we have $v_2v \in A(D)$ and $v_pv \in A(D)$ and so $\{v_1,v_2,v_p,v\}$ either induces a $K_4^-$ in $G(D)$ (which is impossible by Claim~\ref{claim_K_^4-}) or a cycle $C'$ of length $4$ in $G(D)$ such that $D_{C'}$ is non-alternating in $D$, contradicting \eqref{eq_alternating}. This proves the claim. 
\end{proof2}

\begin{claim}\label{claim_allcyclesinduced}
All cycles in $G(D)$ are induced, i.e., no cycle has a chord.
\end{claim}

\begin{proof2}
Let $C$ be a cycle in $G(D)$. We prove that $C$ cannot contain a chord by induction on the length $p$ of $C$. If $p=4$, then $C$ has no chord as otherwise, the vertices of $C$ would either induce a $K_4$ or a $K_4^-$ in $G(D)$, contradicting Claim~\ref{claim_K_4} or Claim~\ref{claim_K_^4-}. Now assume $p \geq 5$. If $C$ has a chord, say $uv \in E(G)$, then the edge $uv$ divides the cycle $C$ into two smaller cycles $C_1$ and $C_2$. Then it follows from the induction hypothesis that neither $C_1$ nor $C_2$ has a chord. Hence, $C_1$ and $C_2$ are induced cycles of $G(D)$, and Claim~\ref{claim_induced-cycle->cyclic} implies that $D_{C_1}$ and $D_{C_2}$ are directed cycles. Furthermore, $uv$ is the only chord of $C$, since otherwise $G[V(C)]$ would contain a smaller cycle than $C$ whose edges would have no cyclic orientation in $D$,  contradicting Claim~\ref{claim_induced-cycle->cyclic}. By symmetry, we may assume that $uv \in A(D)$. Then, in $D_C$ the vertex $u$ has two in-neighbors, and the vertex $v$ has two out-neighbors, say $w$ and $w'$. Moreover, by symmetry, $C_1$ contains the vertices $u,v,$ and $w$ and $C_2$ contains the vertices $u,v,$ and $w'$. Let $T$ be an acyclic transversal of $(X,H)/v$ and let $u_1 \in X_u \cap T$, $w_1 \in X_w \cap T$, and $w_1' \in X_{w'} \cap T$. Furthermore we choose a cyclic ordering of the vertices of $C$ such that $w$ is the left neighbor of $v$ and $w'$ is the right neighbor. Then, there are vertices $v_1,v_2,v_3 \in X_v$ with $v_1w_1, v_2w_1'$ and $u_1v_3 \in A(H)$ (by Proposition~\ref{prop_forbidden_config}(b),(c)). Furthermore, by Proposition~\ref{prop_forbidden_config}(a), $v_1 \neq v_2$.
By shifting $w \to v$ and the remaining vertices of $C$ (except $v_1$) counterclockwise around $C$, we get an acyclic transversal $T'$ of $(X,H)/w'$ with $v_1 \in T'$. Thus, by Proposition~\ref{prop_forbidden_config}(c), there is a vertex $w_2' \in X_{w'}$ with $v_1w_2' \in A(H)$. In particular, $w_2' \neq w_1'$ (as $v_1 \neq v_2)$. By similar argumentation, $v_2$ has an out-neighbor $w_2 \neq w_1$ from $X_{w}$ (see Figure~\ref{fig_1-claim7}). Now we claim that $v_3 \not \in \{v_1,v_2\}$. Assume that $v_3=v_1$. Then, starting from $T$, we can shift each vertex from $C_2$ counterclockwise (beginning with $u \to v$) around $C_2$ (which gives us a transversal of $(X,H)/w'$ containing $v_1$) and, afterwards shift $v \to w'$. Then we get an acyclic transversal $T^*$ of $(X,H)/v$ that contains $w_1$ as well as $w_2'$ and so $|N_H^+(v_1) \cap T^*| \geq 2$, a contradiction to Proposition~\ref{prop_forbidden_config}(a). Hence, $v_3 \neq v_1$. By repeating the argumentation with $C_1$ instead of $C_2$ we conclude that $v_3 \neq v_2$. Clearly, $v_3$ has an out-neighbor $w_3' \in X_{w'}$ and an out-neighbor $w_3 \in X_w$ (shift clockwise around $C_2$, respectively $C_1$). This is also illustrated in Figure~\ref{fig_2-claim7}. By (C2) and since $v_3 \not \in \{v_1,v_2\}$, the vertex $w_3'$ is neither $w_1'$ nor $w_2'$. Now finally, starting from $T$, we shift each vertex (beginning with $u \to v$, i.e. $u_1 \to v_3$) counterclockwise around $C_2$ such that we get an acyclic transversal of $(X,H)/w'$ and, afterwards, we shift $v \to w'$ (i.e. $v_3 \to w_3'$). This gives us an acyclic transversal $\tilde{T}$ of $(X,H)/v$ with $w_3' \in \tilde{T}$. We claim that $v_2$ has no out-neighbor in $\tilde{T}$ (which would contradict Proposition~\ref{prop_forbidden_config}(a)). As $uv$ is the unique chord of $C$, we conclude that $w \not \in V(C_2)$ and so $w_1 \in \tilde{T}$. Since $v_1w_1 \in A(H)$, (C2) implies that $v_2w_1 \not \in  A(H)$. Furthermore, the out-neighbor of $v_2$ from $\tilde{T}$ must be contained in $\bigcup_{v' \in V(C_2)} X_{v'}$ as $w_1'$ is the out-neighbor of $v_2$ from $T$ and since we only shifted around $C_2$. But since $C_2$ has no chords and since $vu \not \in A(H)$, the out-neighbor of $v_2$ from $\tilde{T}$ can only be the vertex from $X_{w'} \cap \tilde{T}$, that is, $w_3'$. However, $v_3w_3' \in A(H)$ and so $v_2w_3' \not \in A(H)$. Thus, $v_2$ has not out-neighbor from $\tilde{T}$, a contradiction. This proves the claim.
\end{proof2}

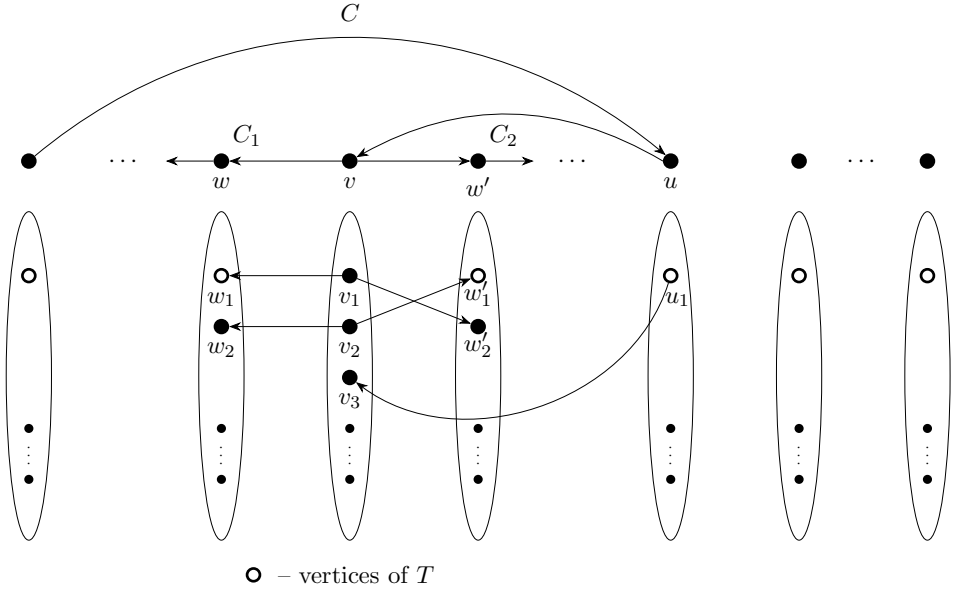
\begin{figure}[h]
\centering
\resizebox{.85\linewidth}{!}{
\begin{tikzpicture}[>={[scale=1.1]Stealth},
tinyvertex/.style={circle,minimum size=1mm,very thick, draw=black, fill=black, inner sep=0mm}]

\node[vertex, label={below:$w$}] (w){};
\node[vertex, xshift=-3cm] (u') at (w) {};
\node[vertex, label={below:$v$}, xshift=2cm] (v) at (w) {};
\node[vertex, label={below:$w'$},  xshift=2cm] (w') at (v) {};
\node[xshift=1.5cm] at (w') {$\cdots$};
\node[vertex,  xshift=3cm, label=below:$u$]  (u) at (w') {};
\node[vertex, xshift=2cm] (x) at (u) {};
\node[xshift=1cm] at (x) {$\cdots$};
\node[vertex, xshift=2cm] (x') at (x) {}; 
\node [xshift=-1cm](wl) at (w) {};
\node[xshift=-1.5cm] at (w) {$\cdots$};
\node [xshift=1cm] (wr) at (w'){};
\node[yshift=.4cm, xshift=.4cm] at (w) {$C_1$};
\node[yshift=.4cm, xshift=.4cm] at (w') {$C_2$};

\path[-, bend angle=30]
		(v)
		edge [->] (w')
		edge [->] (w)
		edge [<-, bend left] (u.west)
		(w) edge [->] (wl)
		(w') edge[->] (wr)
		(u') edge [->, bend angle=40, bend left] node [midway, label={above:$C$}] {} (u.130);
%

\begin{scope}[bend angle=10]
\node[vertex, label={ below:$v_1$}, yshift=-1.8cm] (v1) at (v) {};
\node[vertex, label={ below:$v_2$}, yshift=-.8cm] (v2) at (v1){};
\node[vertex, label={ below:$v_3$}, yshift=-.8cm] (v3) at (v2){};
\node[tinyvertex, yshift=-.8cm] (v4) at (v3){};
\node[yshift=-.4cm,  font=\tiny,rotate=90] at (v4) {$\cdots$};
\node[tinyvertex, yshift=-.8cm] (v5) at (v4){};

\node[vertex, fill=white, label={ below:$w_1$}, yshift=-1.8cm] (w1) at (w) {};
\node[vertex, label={ below:$w_2$}, yshift=-.8cm] (w2) at (w1){};
\node[yshift=-.8cm] (w3) at (w2){};
\node[tinyvertex, yshift=-.8cm] (w4) at (w3){};
\node[yshift=-.4cm,  font=\tiny,rotate=90] at (w4) {$\cdots$};
\node[tinyvertex, yshift=-.8cm] (w5) at (w4){};

\node[vertex, fill=white, label={[yshift=5pt] below:$w_1'$}, yshift=-1.8cm] (w1') at (w') {};
\node[vertex, label={[yshift=5pt] below:$w_2'$}, yshift=-.8cm] (w2') at (w1'){};
\node[yshift=-.8cm] (w3') at (w2'){};
\node[tinyvertex, yshift=-.8cm] (w4') at (w3'){};
\node[yshift=-.4cm,  font=\tiny,rotate=90] at (w4') {$\cdots$};
\node[tinyvertex, yshift=-.8cm] (w5') at (w4'){};

\node[vertex, fill=white, label={[xshift=3pt]below:$u_1$}, yshift=-1.8cm] (u1) at (u) {};
\node[tinyvertex, yshift=-2.4cm] (u4) at (u1){};
\node[yshift=-.4cm,  font=\tiny,rotate=90] at (u4) {$\cdots$};
\node[tinyvertex, yshift=-.8cm] (u5) at (u4){};

\node[vertex, fill=white,  yshift=-1.8cm] (u1') at (u') {};
\node[tinyvertex, yshift=-2.4cm] (u4') at (u1'){};
\node[yshift=-.4cm,  font=\tiny,rotate=90] at (u4') {$\cdots$};
\node[tinyvertex, yshift=-.8cm] (u5') at (u4'){};

\node[vertex, fill=white,  yshift=-1.8cm] (x1) at (x) {};
\node[tinyvertex, yshift=-2.4cm] (x4) at (x1){};
\node[yshift=-.4cm,  font=\tiny,rotate=90] at (x4) {$\cdots$};
\node[tinyvertex, yshift=-.8cm] (x5) at (x4){};

\node[vertex, fill=white,  yshift=-1.8cm] (x1') at (x') {};
\node[tinyvertex, yshift=-2.4cm] (x4') at (x1'){};
\node[yshift=-.4cm,  font=\tiny,rotate=90] at (x4') {$\cdots$};
\node[tinyvertex, yshift=-.8cm] (x5') at (x4'){};
\end{scope}	

\begin{pgfonlayer}{background}
\node[ellipse, draw=black, fit=(u1')(u5')]{};
\node[ellipse, draw=black, fit=(w1)(w5)]{};
\node[ellipse, draw=black, fit=(v1)(v5)]{};
\node[ellipse, draw=black, fit=(w1')(w5')]{};
\node[ellipse, draw=black, fit=(u1)(u5)]{};
\node[ellipse, draw=black, fit=(x1)(x5)]{};
\node[ellipse, draw=black, fit=(x1')(x5')]{};
\end{pgfonlayer}

\node[vertex, fill=white, yshift=-1.5cm, xshift=.5cm] (wv) at (w5) {};
\node [xshift=1.6cm] at (wv) {-- vertices of $T$};
\path[-]
(v1) edge[->] (w1)
	 edge[->](w2')
	
(v2) edge[->] (w2)
		edge[->] (w1')
(u1) edge[->,bend angle=55, bend left] (v3);
\end{tikzpicture}
}
\caption{Setting up $(D,X,H)$.}
\label{fig_1-claim7}
\end{figure}

\begin{figure}[h]
\centering
\resizebox{.85\linewidth}{!}{
\begin{tikzpicture}[>={[scale=1.1]Stealth},
tinyvertex/.style={circle,minimum size=1mm,very thick, draw=black, fill=black, inner sep=0mm}]

\node[vertex, label={below:$w$}] (w){};
\node[vertex, xshift=-3cm] (u') at (w) {};
\node[vertex, label={below:$v$}, xshift=2cm] (v) at (w) {};
\node[vertex, label={below:$w'$},  xshift=2cm] (w') at (v) {};
\node[xshift=1.5cm] at (w') {$\cdots$};
\node[vertex,  xshift=3cm, label=below:$u$]  (u) at (w') {};
\node[vertex, xshift=2cm] (x) at (u) {};
\node[xshift=1cm] at (x) {$\cdots$};
\node[vertex, xshift=2cm] (x') at (x) {}; 
\node [xshift=-1cm](wl) at (w) {};
\node[xshift=-1.5cm] at (w) {$\cdots$};
\node [xshift=1cm] (wr) at (w'){};
\node[yshift=.4cm, xshift=.4cm] at (w) {$C_1$};
\node[yshift=.4cm, xshift=.4cm] at (w') {$C_2$};

\path[-, bend angle=30]
		(v)
		edge [->] (w')
		edge [->] (w)
		edge [<-, bend left] (u.west)
		(w) edge [->] (wl)
		(w') edge[->] (wr)
		(u') edge [->, bend angle=40, bend left] node [midway, label={above:$C$}] {} (u.130);
%

\begin{scope}[bend angle=10]
\node[vertex, label={ below:$v_1$}, yshift=-1.8cm] (v1) at (v) {};
\node[vertex, label={ below:$v_2$}, yshift=-.8cm] (v2) at (v1){};
\node[vertex, label={ below:$v_3$}, yshift=-.8cm] (v3) at (v2){};
\node[tinyvertex, yshift=-.8cm] (v4) at (v3){};
\node[yshift=-.4cm,  font=\tiny,rotate=90] at (v4) {$\cdots$};
\node[tinyvertex, yshift=-.8cm] (v5) at (v4){};

\node[vertex, fill=white, label={ below:$w_1$}, yshift=-1.8cm] (w1) at (w) {};
\node[vertex, label={ below:$w_2$}, yshift=-.8cm] (w2) at (w1){};
\node[vertex, label={ below:$w_3$}, yshift=-.8cm] (w3) at (w2){};
\node[tinyvertex, yshift=-.8cm] (w4) at (w3){};
\node[yshift=-.4cm,  font=\tiny,rotate=90] at (w4) {$\cdots$};
\node[tinyvertex, yshift=-.8cm] (w5) at (w4){};

\node[vertex, fill=white, label={[yshift=5pt] below:$w_1'$}, yshift=-1.8cm] (w1') at (w') {};
\node[vertex, label={[yshift=5pt]below:$w_2'$}, yshift=-.8cm] (w2') at (w1'){};
\node[vertex, label={ [yshift=5pt]below:$w_3'$}, yshift=-.8cm] (w3') at (w2'){};
\node[tinyvertex, yshift=-.8cm] (w4') at (w3'){};
\node[yshift=-.4cm,  font=\tiny,rotate=90] at (w4') {$\cdots$};
\node[tinyvertex, yshift=-.8cm] (w5') at (w4'){};

\node[vertex, fill=white, label={[xshift=3pt]below:$u_1$}, yshift=-1.8cm] (u1) at (u) {};
\node[tinyvertex, yshift=-2.4cm] (u4) at (u1){};
\node[yshift=-.4cm,  font=\tiny,rotate=90] at (u4) {$\cdots$};
\node[tinyvertex, yshift=-.8cm] (u5) at (u4){};

\node[vertex, fill=white,  yshift=-1.8cm] (u1') at (u') {};
\node[tinyvertex, yshift=-2.4cm] (u4') at (u1'){};
\node[yshift=-.4cm,  font=\tiny,rotate=90] at (u4') {$\cdots$};
\node[tinyvertex, yshift=-.8cm] (u5') at (u4'){};

\node[vertex, fill=white,  yshift=-1.8cm] (x1) at (x) {};
\node[tinyvertex, yshift=-2.4cm] (x4) at (x1){};
\node[yshift=-.4cm,  font=\tiny,rotate=90] at (x4) {$\cdots$};
\node[tinyvertex, yshift=-.8cm] (x5) at (x4){};

\node[vertex, fill=white,  yshift=-1.8cm] (x1') at (x') {};
\node[tinyvertex, yshift=-2.4cm] (x4') at (x1'){};
\node[yshift=-.4cm,  font=\tiny,rotate=90] at (x4') {$\cdots$};
\node[tinyvertex, yshift=-.8cm] (x5') at (x4'){};
\end{scope}	

\begin{pgfonlayer}{background}
\node[ellipse, draw=black, fit=(u1')(u5')]{};
\node[ellipse, draw=black, fit=(w1)(w5)]{};
\node[ellipse, draw=black, fit=(v1)(v5)]{};
\node[ellipse, draw=black, fit=(w1')(w5')]{};
\node[ellipse, draw=black, fit=(u1)(u5)]{};
\node[ellipse, draw=black, fit=(x1)(x5)]{};
\node[ellipse, draw=black, fit=(x1')(x5')]{};
\end{pgfonlayer}

\node[vertex, fill=white, yshift=-1.5cm, xshift=.5cm] (wv) at (w5) {};
\node [xshift=1.6cm] at (wv) {-- vertices of $T$};
\path[-]
(v1) edge[->] (w1)
	    edge[->](w2')
(v2) edge[->] (w2)
		edge[->] (w1')
(v3) edge[->] (w3)
	 edge[->] (w3')
(u1) edge[->,bend angle=55, bend left] (v3);
\end{tikzpicture}
}
\caption{Including the neighbors of $v_3$.}
\label{fig_2-claim7}
\end{figure}
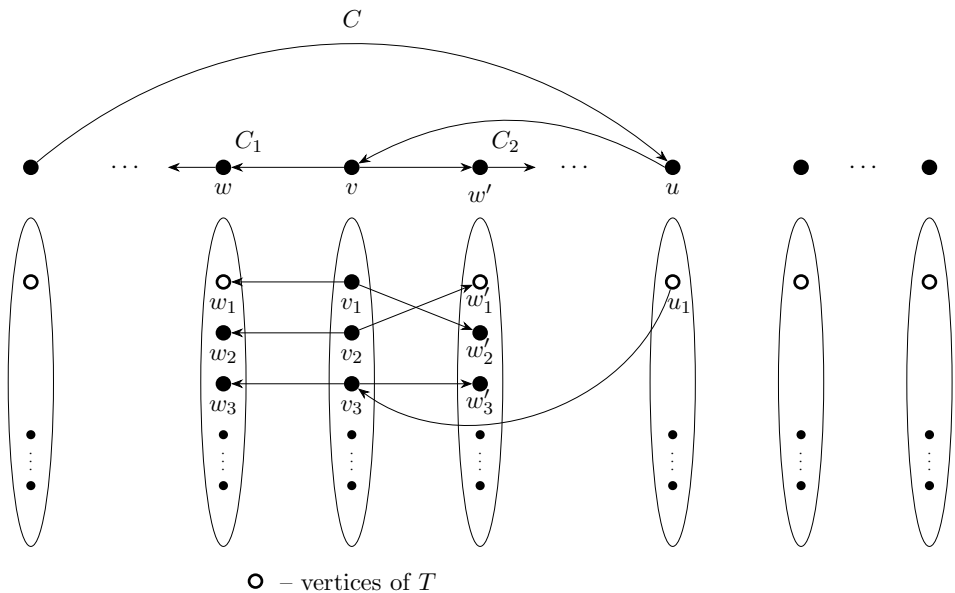

The remaining part of the proof is straightforward: As $D$ is a block, $G(D)$ contains an induced cycle $C$. Then, $D_C$ is a directed cycle by Claim~\ref{claim_induced-cycle->cyclic}. We claim that $D=D_C$. Otherwise, there would be a vertex $v \in V(D) \setminus V(C)$. Moreover, since $D$ and therefore $G(D)$ is a block, there are two internally disjoint paths $P$ and $P'$ in $G(D)$ from $v$ to vertices $w \neq w'$ such that $V(P) \cap V(C) = \{w\}$ and $V(P') \cap V(C) = \{w'\}$. Since all cycles of $G(D)$ are induced (by Claim~\ref{claim_allcyclesinduced}), $w$ and $w'$ are not consecutive in $C$. Let $P_C$ and $P_C'$ denote the two internally disjoint paths between $w$ and $w'$ contained in $C$. Then, $P,P'$ together with $P_C$, respectively $P,P'$ together with $P_C'$ form induced cycles $C_1$ and $C_2$ of $G(D)$. Since $D_C$ is a directed cycle, either $D_{C_1}$ or $D_{C_2}$ is not a directed cycle, contradicting Claim~\ref{claim_induced-cycle->cyclic}. Hence, $D=D_C$, \emph{i.e.}, $D$ is a directed cycle. As $(D,X,H)$ is a minimal uncolorable degree-feasible configuration, we easily conclude that $(D,X,H)$ is a C-configuration. This completes the proof. 
\end{proof}

\section{Concluding Remarks}

The next two statements are direct consequences of Theorem~\ref{theorem:main-result} and Proposition~\ref{prop_block-structure}. In particular, Theorem~\ref{theorem_harut-extended} is a generalization of Theorem~\ref{theorem_harut}.

\begin{corollary}
Let $(D,X,H)$ be a degree-feasible configuration. If $(D,X,H)$ is minimal uncolorable, then for each block $B \in \mathcal{B}(D)$ there is a uniquely determined cover $(X^B,H^B)$ of $B$ such that the following statements hold.
\begin{itemize}
\item[\upshape (a)] For every block $B \in \mathcal{B}(D)$, the triple $(B,X^B,H^B)$ is a {\upshape K}-configuration, a {\upshape C}-configuration, or a {\upshape BC}-configuration.
\item[\upshape (b)] The digraphs $H^B$ with $B \in \mathcal{B}(D)$ are pairwise disjoint and $H= \bigcup_{B \in \mathcal{B}(D)} H^B$.
\item[\upshape (c)] For each vertex $v \in V(D)$ it holds $X_v=\bigcup_{B \in \mathcal{B}(D), v \in V(B)}X_v^B$.
\end{itemize}
\end{corollary}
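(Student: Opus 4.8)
The plan is to obtain the corollary as an immediate consequence of the two results that precede it, with essentially no new work. First I would invoke Theorem~\ref{theorem:main-result}: since $(D,X,H)$ is a degree-feasible configuration that is minimal uncolorable, it is \emph{constructible}, that is, it belongs to the smallest class of feasible configurations containing every K-, C-, and BC-configuration and closed under the merging operation. In particular there is a finite sequence of merging operations building $(D,X,H)$ from disjoint basic configurations, and (as observed right after the definition of constructible configurations) every block of $D$ is a DP-brick.

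Second, I would apply Proposition~\ref{prop_block-structure} to the constructible configuration $(D,X,H)$. That proposition already asserts, for every $B\in\mathcal{B}(D)$, the existence of a uniquely determined cover $(X^B,H^B)$ of $B$ satisfying exactly statements (a)--(c): $(B,X^B,H^B)$ is a K-, C-, or BC-configuration; the digraphs $H^B$ are pairwise disjoint with $H=\bigcup_{B\in\mathcal{B}(D)}H^B$; and $X_v=\bigcup_{B\in\mathcal{B}(D),\,v\in V(B)}X_v^B$ for each $v\in V(D)$. Since the conclusion of the corollary coincides verbatim with this, the argument is finished, and I would present it as the two-line deduction: ``By Theorem~\ref{theorem:main-result}, $(D,X,H)$ is constructible; the claim now follows from Proposition~\ref{prop_block-structure}.''

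The only point worth a sentence of care is the \emph{uniqueness} of $(X^B,H^B)$, i.e.\ that it does not depend on how $(D,X,H)$ was assembled by merging. This is already built into Proposition~\ref{prop_block-structure} (whose proof was left to the reader), so there is no real obstacle; one simply notes that by (C2) every arc of $H$ joins colour sets of adjacent vertices of $D$, hence lies inside a single block's colour sets, and at a cut vertex $v^*$ the set $X_{v^*}$ splits according to the merging as a disjoint union over the incident blocks, so $H^B$ is forced to be $H$ restricted to $\bigcup_{v\in V(B)}X_v^B$. Thus the main ``difficulty'' is purely bookkeeping, and the substance of the corollary is entirely carried by Theorem~\ref{theorem:main-result} together with Proposition~\ref{prop_block-structure}.
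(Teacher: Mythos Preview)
Your proposal is correct and matches the paper's own treatment exactly: the paper states that this corollary is a direct consequence of Theorem~\ref{theorem:main-result} and Proposition~\ref{prop_block-structure}, which is precisely the two-line deduction you give. Your extra remark on uniqueness is a reasonable elaboration of what Proposition~\ref{prop_block-structure} already asserts, so there is nothing to add.
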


\begin{theorem}\label{theorem_harut-extended}
A connected digraph $D$ is not  {\upshape DP}-degree-colorable if and only if for every block $B$ of $D$ one of the following cases occurs:
\begin{itemize}
\item[\upshape (a)] $B$ is a directed cycle of length $\geq 2$.
\item[\upshape (b)] $B$ is a bidirected cycle of length $\geq 3$.
\item[\upshape (c)] $B$ is a bidirected complete graph.
\end{itemize}
\end{theorem}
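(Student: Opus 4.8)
The plan is to obtain the statement as a corollary of Theorem~\ref{theorem:main-result}, Proposition~\ref{prop_block-structure}, and Proposition~\ref{prop_merging}, treating the two implications separately. The guiding observation is that $D$ fails to be DP-degree-colorable precisely when $D$ carries a minimal uncolorable degree-feasible configuration, and that by Theorem~\ref{theorem:main-result} such a configuration is the same thing as a constructible one.

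For the forward direction, I would start from a degree-feasible cover $(X,H)$ for which $D$ is not $(X,H)$-colorable, so that $(D,X,H)$ is an uncolorable degree-feasible configuration (here $D$ connected is part of the hypothesis). Using Proposition~\ref{prop_feas-config}(b) and the discussion following it, I would pass to a spanning subdigraph $H'$ of $H$ with $(D,X,H')$ minimal uncolorable; it remains degree-feasible. Theorem~\ref{theorem:main-result} then makes $(D,X,H')$ constructible, and Proposition~\ref{prop_block-structure}(a) shows that for every block $B$ of $D$ the triple $(B,X^B,H^B)$ is a K-, C-, or BC-configuration; hence $B$ is a complete digraph, a directed cycle of length $\geq 2$, or a bidirected cycle of length $\geq 4$. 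These three options fall under (c), (a), and (b) respectively, which settles this direction.

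For the converse I would argue by construction. Assuming each block $B$ of $D$ is a directed cycle of length $\geq 2$, a bidirected cycle of length $\geq 3$, or a complete digraph, I would pick for each block, on pairwise disjoint color sets, a minimal uncolorable degree-feasible configuration $(B,X^B,H^B)$: a C-configuration if $B$ is a directed cycle, a K-configuration if $B$ is a complete digraph or a bidirected $3$-cycle (note that $D(K_3)$ is exactly the bidirected triangle), and a BC-configuration if $B$ is a bidirected cycle of length $\geq 4$; these are minimal uncolorable degree-feasible by the discussion preceding Theorem~\ref{theorem:main-result}. Since $D$ is connected, it is obtained from its blocks by a sequence of merging operations following its block--cut-tree (repeatedly remove a leaf block while keeping its cut vertex, and induct on the number of blocks). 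By repeated application of Proposition~\ref{prop_merging}, the resulting configuration $(D,X,H)$ with $H=\bigcup_{B}H^B$ is a minimal uncolorable degree-feasible configuration; in particular $(X,H)$ is a cover of $D$ with $|X_v|\geq\max\{d_D^+(v),d_D^-(v)\}$ for all $v$ for which $D$ is not $(X,H)$-colorable, i.e.\ $D$ is not DP-degree-colorable.

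I do not expect a real obstacle, as this is essentially a repackaging of already-proven results; the points needing care are the dictionary between the three ``DP-brick'' types and cases (a)--(c) — in particular the degenerate and overlapping cases $K_1$, the digon ($=D(K_2)$, which is also a directed $2$-cycle), and the bidirected triangle ($=D(K_3)$) — and a clean formulation of the block--cut-tree induction in the converse, where one must check that each step is genuinely the merging operation of Proposition~\ref{prop_merging}, namely the identification of a single vertex of a leaf block with a single vertex of the configuration built so far, on disjoint color sets.
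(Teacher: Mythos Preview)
Your proposal is correct and follows essentially the same approach the paper has in mind: the paper states this theorem as a direct consequence of Theorem~\ref{theorem:main-result} and Proposition~\ref{prop_block-structure}, and your two directions unpack exactly that, with the converse made explicit via Proposition~\ref{prop_merging} (equivalently, via the definition of constructible together with Theorem~\ref{theorem:main-result}). Your attention to the overlap cases (the digon as both $D(K_2)$ and a directed $2$-cycle, the bidirected triangle as $D(K_3)$) and to the block--cut-tree induction is appropriate and does not introduce anything beyond what the paper's machinery already provides.
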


Finally, we deduce a Brooks-type theorem for DP-colorings of digraphs. For undirected graphs, the theorem was proven by Bernshteyn, Kostochka, and Pron \cite{BeKoPro17}.

\begin{theorem}
Let $D$ be a connected digraph. Then, $\chi_{\rm{DP}}(D) \leq \max \{\Delta^+(D), \Delta^-(D)\} + 1$ and equality holds if and only if $D$ is  
\begin{itemize}
\item[\upshape (a)] a directed cycle of length $\geq 2$, or
\item[\upshape (b)] a bidirected cycle of length $\geq 3$, or
\item[\upshape (c)] a bidirected complete graph.
\end{itemize}
\end{theorem}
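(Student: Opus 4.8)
The upper bound $\chi_{\mathrm{DP}}(D)\le\max\{\Delta^+(D),\Delta^-(D)\}+1$ is exactly the sequential-colouring observation already recorded in Section~\ref{sec-dp_intro}, so it only remains to decide when equality holds; throughout write $k=\max\{\Delta^+(D),\Delta^-(D)\}$.

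For the ``if'' direction, suppose $D$ is one of the three listed digraphs. In each case $D$ is diregular with $d_D^+(v)=d_D^-(v)=k$ for all $v$ ($k=1$ for a directed cycle, $k=2$ for a bidirected cycle, $k=n-1$ for a complete digraph of order $n$), and one of the basic configurations introduced before Theorem~\ref{theorem:main-result} is an uncolorable degree-feasible configuration $(D,X,H)$ with $|X_v|=k$ for every $v$: the C-configuration when $D$ is a directed cycle, a BC-configuration when $D$ is a bidirected cycle of length $\ge 4$, the K-configuration when $D$ is a complete digraph (which also covers the bidirected triangle). Hence $D$ is not DP-$k$-colorable, so $\chi_{\mathrm{DP}}(D)\ge k+1$, and together with the upper bound we obtain equality.

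For the converse, assume $\chi_{\mathrm{DP}}(D)=k+1$. Then $D$ is not DP-$k$-colorable, so there is a $k$-cover $(X,H)$ of $D$ for which $D$ is not $(X,H)$-colorable; since $k\ge\max\{d_D^+(v),d_D^-(v)\}$ for every $v$, the configuration $(D,X,H)$ is degree-feasible and uncolorable. By Proposition~\ref{prop_feas-config}(b) it contains a spanning subconfiguration $(D,X,H')$ that is minimal uncolorable, and Proposition~\ref{prop_main-prop}(a) then gives $|X_v|=d_D^+(v)=d_D^-(v)$ for all $v$; combined with $|X_v|\ge k$ and $d_D^\pm(v)\le k$ this forces $d_D^+(v)=d_D^-(v)=k$ for every $v\in V(D)$. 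By Theorem~\ref{theorem:main-result} (equivalently, by Theorem~\ref{theorem_harut-extended}) every block of $D$ is a DP-brick. It then suffices to show that $D$ has a single block. If $D$ had a cut vertex, choose a leaf block $B$ of its block--cut tree, let $c$ be the unique cut vertex of $D$ lying in $B$, and pick $w\in V(B)\setminus\{c\}$; since $w$ lies in no other block we have $d_D^+(w)=d_B^+(w)$, and as every vertex of the DP-brick $B$ has the same out-degree inside $B$ we get $d_B^+(c)=d_B^+(w)=d_D^+(w)=k=d_D^+(c)$, so $c$ has no out-neighbour outside $B$; symmetrically $c$ has no in-neighbour outside $B$, contradicting that $c$ lies in a second block. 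Hence $D$ equals its unique block, which is a DP-brick, i.e.\ a directed cycle of length $\ge 2$, a bidirected cycle of length $\ge 3$, or a complete digraph.

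The argument introduces no real difficulty beyond Theorem~\ref{theorem:main-result}; the one step deserving care is the final one, where the diregularity supplied by Proposition~\ref{prop_main-prop}(a) is precisely what upgrades ``every block of $D$ is a DP-brick'' to ``$D$ is a DP-brick''. (The degenerate case $D=K_1$ needs no separate treatment, a K-configuration being permitted to have $n=1$.)
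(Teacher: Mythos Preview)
Your proof is correct and follows essentially the same route as the paper's: the upper bound, the ``if'' direction via the basic K-, C-, BC-configurations, and the ``only if'' direction by passing to a minimal uncolorable spanning subconfiguration, invoking Proposition~\ref{prop_main-prop}(a) to force $d_D^+(v)=d_D^-(v)=k$ for every $v$, and then using Theorem~\ref{theorem:main-result}/\ref{theorem_harut-extended} to see that each block is a DP-brick. The only difference is that the paper compresses the final step (``$D$ has only one block'') into a single sentence, whereas you spell it out via the block--cut tree and the diregularity of DP-bricks; your version is a cleaner justification of exactly the same implication.
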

\begin{proof}
As mentioned earlier, $\chi_{\text{DP}}(D) \leq \max \{\Delta^+(D), \Delta^-(D)\} + 1$ is always true. Moreover, if $D$ satisfies (a),(b), or (c), then $\chi_{\text{DP}}(D) = \max\{\Delta(D)^+, \Delta^-(D)\} + 1$, just take a C-, BC-, or K-configuration. Now assume $\chi_{\text{DP}}(D) = \max \{\Delta^+(D), \Delta^-(D)\} + 1$. Then, there is a cover $(X,H)$ of $D$ such that $|X_v| \geq \max \{\Delta^+(D), \Delta^-(D)\}$ for all $v \in V(D)$ and $D$ is not $(X,H)$-colorable. Hence, $(D,X,H)$ is an uncolorable degree-feasible configuration and there is a spanning subdigraph $H'$ of $H$ such that $(D,X,H')$ is minimal uncolorable. Then, $|X_v|=d_D^+(v)=d_D^-(v)$ for all $v \in V(G)$ (by Proposition~\ref{prop_main-prop}(a)) and each block of $D$ satisfies (a),(b) or (c) (by Theorem~\ref{theorem_harut-extended}). Thus, $|X_v| = \max \{\Delta^+(D), \Delta^-(D)\}$ for all $v \in V(D)$ and we conclude that $D$ has only one block and, therefore, satisfies (a), (b) or (c). This completes the proof.
\end{proof}

In 1996, Johansson~\cite{Jo96} proved that $\chi(G)=\mathcal{O}( \frac{\Delta(G)}{\log_2\Delta(G)})$ provided that the undirected graph $G$ contains no triangle.  Regarding digraphs, Erd\H{o}s~\cite{Erd79} conjectured that $\chi(D)=\mathcal{O}(\frac{\Delta(D)}{\log_2\Delta(D)})$ for digon-free digraphs, whereas $\Delta(D)$ denotes the maximum total degree of $D$. To the knowledge of the authors, this conjecture is still open.  Related to this question, Harutyunyan and Mohar \cite{HaMo11.2}  proved the following. Given a digraph $D$, let $\tilde{\Delta}(D)=\max\{\sqrt{d^+(v)d^-(v)} ~|~ v \in V(D)\}$.

\begin{theorem}[Harutyunyan and Mohar]
There is an absolute constant $\Delta_1$ such that every digon-free digraph $D$ with $\tilde{\Delta}(D) \geq \Delta_1$ has $\chi(D)  \leq (1- e^{-13})\tilde{\Delta}(D)$.
\end{theorem}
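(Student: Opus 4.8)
Proof proposal (plan). The plan is to prove this by the probabilistic method, in the spirit of the Molloy--Reed bounds for colouring locally sparse graphs, with digon-freeness playing the role that triangle-freeness plays there. Write $\Delta = \tilde{\Delta}(D)$, fix a small $\delta>0$ (the arithmetic will force $\delta$ to be as tiny as $e^{-13}$) and set $k = \lceil (1-\delta)\Delta\rceil$. The observation that makes everything possible is a \emph{local} sufficient condition for a colouring $\varphi\colon V(D)\to[k]$ to be proper in the Neumann-Lara sense: call a vertex $v$ \emph{safe} for $\varphi$ if $v$ has no in-neighbour coloured $\varphi(v)$ or no out-neighbour coloured $\varphi(v)$. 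If every vertex is safe, then no colour class contains a directed cycle, since any vertex lying on a monochromatic directed cycle would have both a monochromatic in-neighbour and a monochromatic out-neighbour. Hence it suffices to exhibit a $k$-colouring in which every vertex is safe.

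The first step is to colour every vertex of $D$ independently and uniformly at random from $[k]$. Since $\sqrt{d_D^+(v)\,d_D^-(v)}\le\Delta$, we have $\min\{d_D^+(v),d_D^-(v)\}\le\Delta$, so for each $v$ one of the two events "no in-neighbour coloured $\varphi(v)$'' and "no out-neighbour coloured $\varphi(v)$'' has probability at least $(1-1/k)^{\Delta}$; thus $\Pr[v\text{ is safe}]\ge(1-1/k)^{\Delta}=\Omega(1)$, a positive absolute constant for $\Delta$ large. Let $U$ be the (random) set of vertices that are not safe. The idea is a two-phase colouring: keep the colours on $V(D)\setminus U$ and recolour the vertices of $U$, \emph{from the same palette $[k]$}, so as to make the whole colouring all-safe.

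For the second phase I would, for each $v\in U$, track the list $L(v)$ of colours $c$ such that assigning $c$ to $v$ would make $v$ safe, given the colours fixed outside $U$: this is all of $[k]$ except the colours used simultaneously by some in-neighbour and some out-neighbour of $v$ that lie outside $U$. A concentration argument (Chernoff/Azuma over the independent colour choices) bounds the number of forbidden colours and shows $|L(v)|$ remains a constant fraction of $k$, comfortably larger than the number of neighbours of $v$ inside $U$. One then completes the recolouring of $U$ by the Lovász Local Lemma, with bad events "$v\in U$ is not safe after recolouring'' and "a vertex of $V(D)\setminus U$ ceases to be safe because one of its neighbours in $U$ got recoloured to its colour'' — the latter adds a further bounded family of local constraints to each recoloured vertex's list. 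Each bad event has small probability (because $|L(v)|$ dominates the relevant in-/out-degree) and depends on only $\mathrm{poly}(\Delta)$ others; digon-freeness enters here, exactly as in the triangle-free case, to keep the failure probabilities and the codegree-type quantities small enough for the LLL criterion. Imposing $\Delta\ge\Delta_1$ for a suitable absolute $\Delta_1$ validates all of these inequalities.

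I expect the main obstacle to be the interface between the two phases. The set $U$ is a \emph{positive fraction} of $V(D)$ — the first phase only guarantees a constant, and not especially large, fraction of safe vertices — so $U$ cannot be dismissed as a negligible "mistake set'', and one must genuinely prove that, conditioned on the outcome of the first phase, the residual instance on $U$ together with the surviving palettes is still colourable. Concretely the difficulty is to control, simultaneously and uniformly over $v$, the list sizes $|L(v)|$ and the in-/out-degrees of vertices of $U$ within $U$, robustly enough that the finishing Local Lemma closes; this is precisely where the slack collapses to $e^{-13}$, and it is why one reserves a small block of colours (or iterates the procedure) rather than hoping a single clean random colouring suffices. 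In short, the "safe'' criterion reduces the global nature of acyclicity to a union of local constraints, but at the price that each local constraint fails with constant probability, so the entire difficulty migrates into the concentration-plus-LLL bookkeeping of the second phase.
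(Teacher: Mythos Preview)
The paper does not prove this statement. It appears in the Concluding Remarks as a quoted result of Harutyunyan and Mohar (reference \cite{HaMo11.2}), cited solely to motivate the open question of whether an analogous bound holds for the DP-chromatic number of digon-free digraphs. No argument for it is given or even sketched in the paper, so there is no proof here against which to compare your proposal.

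For what it is worth, your plan is in the right spirit and broadly matches the actual Harutyunyan--Mohar argument: a random colouring, the ``safe vertex'' reduction that turns the global acyclicity condition into a union of local events, and a Local Lemma finish. One point where your sketch is imprecise is where digon-freeness enters. You locate it in the second-phase LLL bookkeeping, but it already drives the first-phase analysis: digon-freeness means $N_D^+(v)\cap N_D^-(v)=\varnothing$, so for $v$ to be unsafe one needs \emph{two distinct} neighbours---one in-, one out-, with independently chosen colours---to both land on $\varphi(v)$. This independence is exactly what makes unsafety a second-order event per colour and supplies the slack that ultimately becomes $e^{-13}$. Your bound $\Pr[v\text{ safe}]\ge(1-1/k)^{\Delta}$, which only uses $\min\{d^+,d^-\}\le\tilde\Delta$, is true even with digons present and by itself gives no room below $\tilde\Delta$ colours; the real gain comes from exploiting the disjointness of the in- and out-neighbourhoods when estimating the number of forbidden colours at each vertex.
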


Moreover, Bensmail, Harutyunyan and Khang Le \cite{BeHaKha15} managed to extend the above theorem to list-colorings of digon-free digraphs.

\begin{theorem}[Bensmail, Harutyunyan and Khang Le]
There is an absolute constant $\Delta_1$ such that every digon-free digraph $D$ with $\tilde{\Delta}(D) \geq \Delta_1$ has $\chi_{\ell}(D)  \leq (1- e^{-18})\tilde{\Delta}(D)$.
\end{theorem}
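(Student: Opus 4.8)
The plan is to adapt the semi-random (``nibble'') method that Harutyunyan and Mohar used for the ordinary chromatic number in~\cite{HaMo11.2}, now carrying lists along at every step. Set $k=\lceil(1-e^{-18})\tilde{\Delta}(D)\rceil$, fix an arbitrary list-assignment $L$ with $|L(v)|\ge k$ for all $v\in V(D)$, and aim to produce a choice $\varphi(v)\in L(v)$ whose colour classes induce acyclic subdigraphs. First I would carry out the standard reduction that removes the vertices whose in-degree or out-degree is far below $\tilde{\Delta}(D)$: these are set aside and coloured at the very end, the slack $\tilde{\Delta}(D)-k=\Omega(\tilde{\Delta}(D))$ making that finishing step go through. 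After it one may assume $d_D^+(v),d_D^-(v)\le(1+o(1))\tilde{\Delta}(D)$ and $\min\{d_D^+(v),d_D^-(v)\}\ge(1-o(1))\tilde{\Delta}(D)$ for every remaining vertex.

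The main loop performs $O(\log\tilde{\Delta}(D))$ rounds. In one round, each still-uncoloured vertex is made \emph{active} independently with a small probability $\epsilon$, each active vertex is tentatively assigned a colour chosen uniformly at random from its current list, and then an in/out-asymmetric \emph{keep rule} decides which tentative colours become permanent; permanent colours are deleted from the lists of suitable neighbours and the effective degrees of the uncoloured vertices are updated. Two invariants are maintained: (i) the current partial colouring extends to a proper (acyclic) colouring of all of $D$; and (ii) for every uncoloured $v$ the current list strictly dominates a suitable \emph{effective degree} that behaves like $\sqrt{d^+d^-}$ rather than like $d^++d^-$, with the gap growing geometrically from round to round. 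Invariant~(ii) is where the digon-free hypothesis is used: since opposite arcs are forbidden, the in- and out-neighbourhoods of a vertex interact in a controlled way, so that the events ``an out-neighbour of $v$ keeps colour $c$'' and ``an in-neighbour of $v$ keeps colour $c$'' are close enough to independent for the usual concentration estimates to show that a list shrinks strictly more slowly than the effective degree. Each round is shown to succeed by applying the Lov\'asz Local Lemma to the bad events ``some list got too small'' and ``some effective degree stayed too large'', which have exponentially small probability and polynomially bounded dependency.

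The delicate point---the one feature with no analogue in Johansson-type theorems for undirected graphs---is invariant~(i): a monochromatic directed cycle is a \emph{global} obstruction, so one cannot simply forbid a local pattern the way monochromatic edges are forbidden in the undirected setting. The resolution is to break the symmetry between in- and out-arcs in the keep rule, so that in any colouring reachable by the process a monochromatic directed cycle would force some vertex to violate the rule; concretely one only guards against monochromatic out-neighbours when deciding to keep a colour at a vertex, and only deletes that colour from the uncoloured \emph{in}-neighbours. I expect the main obstacle to be verifying that this asymmetric rule is compatible with the degree-and-list estimates that drive the nibble, and in particular that it is the geometric mean $\sqrt{d^+d^-}$, and not the larger quantities, that governs the loss at each round. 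Once the loop terminates every uncoloured vertex has a list strictly larger than its effective degree and a greedy pass---again exploiting the in/out asymmetry to avoid closing a directed cycle---finishes the colouring, together with the sparse vertices set aside at the start. The degradation of Harutyunyan and Mohar's constant $e^{-13}$ to $e^{-18}$ is simply the price of the coarser concentration bounds available once the colour sets are allowed to vary from vertex to vertex.
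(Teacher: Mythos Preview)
The paper does not prove this statement at all: it is quoted in the concluding remarks as a known result of Bensmail, Harutyunyan and Khang Le~\cite{BeHaKha15}, cited only to motivate the open question of whether an analogous bound holds for $\chi_{\mathrm{DP}}$. There is therefore no ``paper's own proof'' to compare your proposal against.

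As for the proposal itself, it reads as a reasonable high-level summary of the strategy in~\cite{BeHaKha15} (which does indeed adapt the semi-random argument of~\cite{HaMo11.2} to the list setting), but it is a sketch rather than a proof: the crucial quantitative work---defining the precise keep rule, stating the exact invariants with explicit constants, and verifying the concentration inequalities that feed into the Local Lemma---is entirely deferred. In particular, your discussion of invariant~(i) is too vague to be checkable: saying that one ``only guards against monochromatic out-neighbours'' does not by itself prevent monochromatic directed cycles, and the actual argument in~\cite{HaMo11.2,BeHaKha15} requires a more careful accounting. If your intent was to reproduce the proof for this survey paper, that is unnecessary; if it was to give a self-contained argument, the proposal would need substantial fleshing out before it could be assessed for correctness.
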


Thus, it is a natural question to ask whether this theorem can be transferred to DP-colorings of digon-free digraphs and the authors encourage the reader to try his luck. 

Another problem that may be worth examining is the following. In \cite{Oh02}, Ohba conjectured that for graphs with few vertices compared to their chromatic number the chromatic number and the list-chromatic number coincide. This conjecture was recently proven by Noel, Reed, and Wu in \cite{NoReWu14}.

\begin{theorem}[Ohba's Conjecture]
For every graph $G$ satisfying $\chi(G) \geq (|G|-1)/2$, we have $\chi(G)=\chi_{\ell}(G)$.
\end{theorem}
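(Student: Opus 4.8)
Since $\chi(G)\le\chi_\ell(G)$ holds for every graph, and the hypothesis $\chi(G)\ge(|G|-1)/2$ is just $|V(G)|\le 2\chi(G)+1$, the theorem says that every graph $G$ with $|V(G)|\le 2\chi(G)+1$ is \emph{chromatic-choosable}, i.e.\ $\chi_\ell(G)\le\chi(G)$. My first step would be the standard reduction to complete multipartite graphs: fix a proper colouring of $G$ with $k:=\chi(G)$ colours and add every edge joining two distinct colour classes, obtaining a complete $k$-partite graph $G^\star$ with $V(G^\star)=V(G)$ and $\chi(G^\star)=k$; since $G$ is a subgraph of $G^\star$ and $\chi_\ell$ does not increase when passing to a subgraph, $\chi_\ell(G)\le\chi_\ell(G^\star)$, so it suffices to bound $\chi_\ell(G^\star)$. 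Passing to an induced supergraph by enlarging the parts (which cannot decrease $\chi_\ell$ and keeps the chromatic number equal to $k$) I may also assume $|V(G^\star)|=2k+1$. Thus the whole statement reduces to proving $\chi_\ell\bigl(K_{n_1,\dots,n_k}\bigr)\le k$ whenever $n_1+\cdots+n_k=2k+1$.

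In a complete multipartite graph with parts $V_1,\dots,V_k$ every colour class of a proper colouring lies inside one part, so an $L$-colouring is exactly a choice of pairwise disjoint colour sets $C_1,\dots,C_k$ with $C_i\cap L(v)\ne\varnothing$ for all $v\in V_i$; that is, each $C_i$ is a transversal of $\mathcal{F}_i:=\{L(v):v\in V_i\}$ and the transversals must be disjoint. I would then take a counterexample $K_{n_1,\dots,n_k}$ with a bad $k$-list assignment $L$ for which $k$ (equivalently $|V(G)|=2k+1$) is smallest, and carry out the usual reductions. The key ones: (i) no part $V_i$ with $|V_i|\ge 2$ can have a colour lying in \emph{every} list of that part, since otherwise one colours $V_i$ with such a colour, deletes it from all other lists, and invokes the minimality of $k$ for the complete $(k-1)$-partite graph that remains, which now has $2k+1-|V_i|\le 2(k-1)+1$ vertices and lists of size $\ge k-1$; (ii) consequently a part of size $2$ has two disjoint lists and a part of size $3$ has empty triple intersection, making the unions $\bigcup_{v\in V_i}L(v)$ large; and (iii) parts of size $1$ (dominating vertices), together with the behaviour of the join operation, must be analysed by hand, using that deleting a dominating vertex overshoots the hypothesis $|V|\le 2\chi+1$ by exactly one vertex. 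These reductions pin a minimal counterexample down to a rigid shape in which every part has size in $\{1,2,3\}$ and, for each size $\ge 2$, only a bounded number of parts.

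The endgame is a Hall-type defect argument on the colours: assigning each part its cheapest transversal (a single common colour when one exists, a pair for a size-$2$ part with disjoint lists, and so on), one shows that if these cannot be made globally disjoint then the obstruction is a small colour set that simultaneously ``traps'' many parts, and in the rigid shape above such an obstruction forces at least $2k+2$ vertices, contradicting $\sum_i n_i=2k+1$. The main obstacle is step (iii) together with finishing off the rigid-shape cases: the reductions by themselves do not kill the counterexample, and one is driven into a long, branching case analysis balancing ``heavy'' colours (those in many lists, which are scarce precisely because a small colour set would trap a part) against the meagre supply of $2k+1$ vertices --- this is exactly the gap between earlier results of the type $|V(G)|\le(2-\varepsilon)\chi(G)$ and the sharp bound $2\chi(G)+1$, and I expect it, along with choosing the right secondary quantities to extremize so that all the reductions chain together, to consume the bulk of the proof.
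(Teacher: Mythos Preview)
The paper does not prove this statement. Ohba's Conjecture is quoted in the concluding remarks as a known theorem, with the proof attributed to Noel, Reed, and Wu \cite{NoReWu14}; no argument, not even a sketch, is given in the paper itself. So there is nothing here to compare your proposal against.

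For what it is worth, your outline is a reasonable caricature of the Noel--Reed--Wu strategy: the reduction to complete multipartite graphs, the minimal-counterexample reductions forcing no common colour inside a part, and a final Hall-type defect argument are indeed the shape of their proof. You are also right that the difficulty is concentrated in the ``rigid-shape'' endgame, and your proposal is honest in admitting that this part is not carried out. But none of this appears in the present paper, which simply cites the result and moves on to its directed analogue.
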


In \cite{BeHaKha15}, a simple transformation is used in order to obtain the directed version of Ohba's Conjecture from the undirected case.

\begin{theorem}
For every digraph $D$ satisfying $\chi(D) \geq (|D|-1)/2$, we have $\chi(D)=\chi_\ell(D)$.
\end{theorem}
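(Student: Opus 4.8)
The plan is to reduce the directed statement to its undirected counterpart (Ohba's Conjecture, stated above and proved in \cite{NoReWu14}) by the simple transformation used in \cite{BeHaKha15}. Since $\chi(D) \le \chi_\ell(D)$ holds for every digraph, it suffices to prove $\chi_\ell(D) \le \chi(D)$. Set $k = \chi(D)$ and $n = |D|$, so that the hypothesis reads $n \le 2k+1$; moreover it is enough to show that $D$ is $L$-colorable for every list-assignment $L$ with $|L(v)| = k$ for all $v \in V(D)$, since larger lists only make coloring easier.

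First I would take a coloring of $D$ with exactly $k = \chi(D)$ colors in which each color class induces an acyclic subdigraph, say with nonempty color classes $V_1, \dots, V_k$, and let $G$ be the complete $k$-partite (undirected) graph with parts $V_1, \dots, V_k$, so that $V(G) = V(D)$ and two vertices are adjacent in $G$ precisely when they lie in distinct classes $V_i$. Then $\chi(G) = k$, because $G$ contains a $K_k$ while the given $k$-partition is a proper coloring, and $|V(G)| = n \le 2k+1 = 2\chi(G)+1$. Hence the undirected Ohba theorem applies to $G$ and gives $\chi_\ell(G) = \chi(G) = k$; in particular $G$ admits a proper $L$-coloring $\varphi$ for the chosen list-assignment $L$.

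It then remains to observe that $\varphi$ is already an $L$-coloring of $D$. Indeed, each color class $\varphi^{-1}(c)$ is an independent set of the complete multipartite graph $G$, hence is contained in a single part $V_i$; therefore $D[\varphi^{-1}(c)]$ is a subdigraph of the acyclic digraph $D[V_i]$ and is itself acyclic. Thus every color class of $\varphi$ induces an acyclic subdigraph of $D$, and $\varphi(v) \in L(v)$ for all $v$, so $D$ is $L$-colorable; consequently $\chi_\ell(D) \le k = \chi(D)$, and equality follows from $\chi(D)\le\chi_\ell(D)$. There is no genuine obstacle left in this argument — all the real work sits inside the undirected Ohba theorem — and the only point one has to verify, which is immediate, is exactly that the independent sets of a complete multipartite graph are the subsets of its parts; the harmless reduction to lists of size exactly $k$ should be recorded at the outset.
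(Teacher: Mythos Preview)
Your argument is correct and is precisely the ``simple transformation'' from \cite{BeHaKha15} that the paper alludes to; the paper itself does not spell out a proof but merely cites this reduction, and your write-up fills in exactly those details (optimal acyclic $k$-coloring $\to$ complete $k$-partite graph $\to$ undirected Ohba $\to$ pull back).
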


It is easy to see that Ohba's Conjecture does not hold if we take DP-colorings instead of list-colorings neither in the undirected nor in the directed case (just take a $C_4$, or a bidirected $C_4$, respectively). However, Bernshteyn, Kostochka and Zhu~\cite{BeKoZhu17} proved the following, sharp, bound. 

\begin{theorem} For $n \in \mathbb{N}$, let $r(n)$ denote the minimum $r \in \mathbb{N}$ such that for every $n$-vertex graph $G$ with $\chi(G) \geq r$, we have $\chi_{DP}(G)=\chi(G)$. Then, $$n - r(n) = \Theta(\sqrt{n}).$$
\end{theorem}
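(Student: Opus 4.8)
The plan is to prove the two bounds $n-r(n)=\Omega(\sqrt n)$ and $n-r(n)=O(\sqrt n)$ separately. The first asserts that $\chi_{DP}(G)=\chi(G)$ for every $n$-vertex graph $G$ with $\chi(G)\ge n-c\sqrt n$, which forces $r(n)\le n-c\sqrt n$, hence $n-r(n)\ge c\sqrt n$; the second produces, for some constant $C$, an $n$-vertex graph $G$ with $\chi(G)\ge n-C\sqrt n$ and $\chi_{DP}(G)>\chi(G)$, which forces $r(n)>n-C\sqrt n$, hence $n-r(n)<C\sqrt n$. Any admissible pair automatically has $c\le C$, so together the two bounds give $n-r(n)=\Theta(\sqrt n)$.

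For $n-r(n)=\Omega(\sqrt n)$ I would first describe the structure of an $n$-vertex graph $G$ with $\chi(G)=n-k$, $k$ small. In an optimal coloring the classes $C_1,\dots,C_{n-k}$ satisfy $\sum_j(|C_j|-1)=k$, so at most $k$ of them are non-trivial and the union $U$ of the non-trivial classes has $|U|\le 2k$; moreover any two vertices in singleton classes are adjacent (otherwise merge their classes), so the singleton-class vertices induce a clique $W$ of order $a:=|W|\ge n-2k$. One checks that $\chi(K_a+G[U])=\chi(G)$, where $+$ is the join; since $G$ is a subgraph of $K_a+G[U]$ and $\chi_{DP}$ is monotone with respect to subgraphs, it suffices to prove $\chi_{DP}(K_a+H)=a+\chi(H)$ whenever $|H|\le 2k$ and $a\ge n-2k$. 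The crux is a lemma of the form: there is an absolute constant $c_0$ with $\chi_{DP}(K_a+H)=a+\chi(H)$ as soon as $a\ge c_0|H|^2$. Granting it, $a\ge n-2k\ge c_0(2k)^2$ holds once $k\le c\sqrt n$, and then $\chi(G)\ge n-c\sqrt n$ gives $\chi_{DP}(G)=\chi(G)$.

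To prove the lemma I would color the clique $K_a$ first. Since $\chi_{DP}(K_a)=a$ and every list has size $a+\chi(H)\ge a$, a greedy count produces many (at least $(a+\chi(H))!/\chi(H)!$) independent transversals of the cover restricted to $K_a$; fix one, $\varphi$. Each vertex of $H$ then loses at most $a$ colors, so the residual cover of $H$ has all lists of size $\ge\chi(H)$. If some $\varphi$ leaves a colorable residual cover we are done, so suppose every residual cover is uncolorable; then each one contains a minimal uncolorable sub-cover, and such a minimal configuration is forced to be ``tight'' (lists of size $\chi(H)$, no wasted colors), so up to renaming of colors there are only boundedly many of them, a quantity $N(H)$ depending on $|H|$ alone. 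Choosing $\varphi$ through a random greedy process, the probability that its residual cover contains any one fixed such obstruction is small, and a union bound over the $N(H)$ obstructions shows that once $a$ exceeds a suitable quadratic function of $|H|$ some $\varphi$ avoids all of them, a contradiction. Making the bound on $N(H)$ and the first-moment estimate sharp enough that this threshold is genuinely quadratic in $|H|$ — which is exactly what pins the answer at $\sqrt n$ rather than some other power of $n$ — is the step I expect to be the main obstacle.

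For $n-r(n)=O(\sqrt n)$ I would fix $b=\lceil\gamma\sqrt n\rceil$ with $\gamma$ a suitable constant, put $a=n-2b$, and build a bad cover on $G=K_a+K_{b\times 2}$, the join of $K_a$ with the complete multipartite graph having $b$ classes of size $2$; here $\chi(G)=a+b=n-b=n-\Theta(\sqrt n)$, so a cover with lists of size $a+b$ and no independent transversal is exactly what is required. Each vertex of the $K_{b\times 2}$-part receives a list of $a$ ``clique colors'' and $b$ ``private colors'', and the matchings are arranged according to a combinatorial gadget on $\Theta(b^2)$ symbols (built, say, from mutually orthogonal Latin squares or a resolvable design) so that, whichever $a$ colors the clique uses, every vertex of $K_{b\times 2}$ is left with its private colors forming one of the uncolorable covers witnessing $\chi_{DP}(K_{b\times 2})>b$, the very inequality behind the failure of Ohba's bound for DP-coloring. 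Such a gadget can be realized precisely when $a=O(b^2)$, and $\gamma$ is chosen so that $a=n-2b$ stays inside the range the gadget tolerates; verifying that $G$ is then not DP-$(a+b)$-colorable yields $r(n)>n-\Theta(\sqrt n)$. As with the lemma, the delicate ingredient is the combinatorial design: one needs the range of clique-sizes $a$ that it survives to be of order $b^2$, matching the order of the lemma's threshold, so that $n-\chi(G)$ and $n-r(n)$ are both of order $\sqrt n$.
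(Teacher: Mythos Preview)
The paper does not prove this theorem at all: it is merely quoted in the concluding remarks as a result of Bernshteyn, Kostochka and Zhu \cite{BeKoZhu17}, with no argument given. There is therefore no ``paper's own proof'' against which your proposal can be compared.

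That said, your outline is broadly in the spirit of the original Bernshteyn--Kostochka--Zhu argument: the upper bound for $r(n)$ is obtained by reducing to a join $K_a+H$ with $|H|$ small and showing that for $a$ large (quadratic in $|H|$) one can always extend a partial transversal on the clique to the rest, while the lower bound comes from an explicit bad cover of $K_a+K_{b\times 2}$ built from a combinatorial design. Two cautions, though. First, in your $\Omega(\sqrt n)$ part the counting/union-bound step (``only $N(H)$ obstructions, each avoided with high probability'') is the heart of the matter, and as you yourself note, making that estimate genuinely quadratic in $|H|$ is nontrivial; your sketch does not yet contain the mechanism that pins the exponent. Second, for the $O(\sqrt n)$ part, invoking ``mutually orthogonal Latin squares or a resolvable design'' is the right kind of object, but the actual construction in \cite{BeKoZhu17} is quite specific, and your description does not make clear how the gadget guarantees that \emph{every} choice of $a$ clique colors leaves an uncolorable residual cover on $K_{b\times 2}$; this universality over all clique-colorings is exactly where the design-theoretic input does real work. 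So the architecture is right, but both halves currently rest on assertions whose proofs are the substance of the theorem.
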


It seems very likely that it is possible to transfer the above theorem to DP-colorings of directed graphs.

\end{document}